\def\Q {{\mathbb Q}}
\def\cC {{ \mathcal C}}
\def\cG {{ \mathcal G}}
\def\cH{{ \mathcal H}}
\def\cT {{ \mathcal T}}
\def\cO {{ \mathcal O}}
\def\R {{\mathbb R}}
\def\N {{\mathbb N}}
\def\F {{\mathbb F}}
\def\cF {{\mathcal F}}
\def\cS {{\mathcal S}}
\def\cR {{\mathcal R}}
\def\Z {{\mathbb Z}}
\def\K {{\mathbb K}}
\def\cZ {{\mathcal Z}}
\def\C {{\mathbb C}}
\newtheorem{theorem}{Theorem}
\newtheorem{proposition}{Proposition}
\newtheorem{corollary}{Corollary}
\newtheorem{lemma}{Lemma}
\newtheorem{assumption}{Assumption}
\newtheorem{definition}{Definition}
\newtheorem{remark}{Remark}
\newtheorem{example}{Example}
\theoremstyle{remark}
\def \b {\bigskip}
\def \m {\medskip}
\def \s {\smallskip}
\begin{document}
\date{\today}

\title[Commutative Banach perfect semi-fields of characteristic $1$.]{A classification of the commutative Banach perfect semi-fields of characteristic $1$:  Applications.} 

\centerline{\author{ Eric Leichtnam}}

\address{CNRS Institut de Math\'ematiques de Jussieu-PRG, b\^atiment Sophie Germain.}
\email{eric.leichtnamATimj-prg.fr}

\maketitle

\m
{\it A la m\'emoire de Tilby.}

\b

\noindent \begin{abstract} We define and study the concept of commutative Banach perfect semi-field 
$(\cF, \oplus, +)$ of characteristic $1$. The metric allowing to define the Banach structure comes from Connes \cite{C}.  We define the  spectrum $S_E(\cF)$ as the set 
of normalized characters $\phi: (\cF, \oplus, +)  \rightarrow (\R, \max ( , ) , +)$. 
 This set is shown to be naturally a compact space. 
Then we construct  an isometric isomorphism of Banach semi-fields of Gelfand-Naimark type: 
 
$$
\Theta: (\cF, \oplus, +)  \rightarrow (C^0(S_E(\cF), \R), {\max}\,( ,), +)
$$
$$
X \mapsto \Theta_X: \phi \mapsto \phi(X)= \Theta_X (\phi)\,.
$$ In this way, $\cF$ is naturally identified with the set of real valued continuous functions on $S_E(\cF)$.
Our proof 
 relies on a study of the congruences on $\cF$ and on a new Gelfand-Mazur type Theorem. As a first application, we deduce that the  spectrum 
 of the Connes-Consani Banach algebra of the Witt vectors of $(\cF, E)$ coincides with $S_E(\cF)$.
 We give many other applications.  Then we  study the case of the commutative cancellative perfect  semi-rings 
$(\cR, \oplus, +)$ and also  give   structure theorems in the Banach case. Lastly, we use these results to propose the foundations 
of a new scheme theory in the characteristic $1$ setting. We introduce a topology of Zariski type on $S_E(\cF)$ and the concept of valuation associated 
to a character $\phi \in S_E(\cF)$. Then we  come to the notions of $v-$local semi-ring and of scheme.
\end{abstract}

\tableofcontents

\section{ Introduction.} $\;$

A semi-ring $(\cR, \oplus, +)$ of characteristic $1$ is  a set $\cR$ endowed with two commutative and associative laws satisfying the following conditions. The law $\oplus$ is    idempotent 
(e.g. $X \oplus X= X,\,\forall X \in \cR$) whereas the   additive law $+$ has a neutral element $0$ and is distributive with respect to $\oplus$. 
The semi-ring $\cR$ is said to be cancellative if $X+Y=X+Z$ implies $Y=Z$.  If 
for any $n \in \N^*, \, X \mapsto n X$ is surjective from $\cR$ onto itself, then $(\cR, \oplus, +)$ is  called a perfect semi-ring.
If 
$(\cR, +)$ is a group then $(\cR, \oplus, +)$ is called a  semi-field and we denote it by $\cF$ rather than $\cR$.

Morally, the first law $\oplus$ is the new addition and the second law $+$ is the new multiplication. But, being idempotent, 
$\oplus$ is very far from being cancellative.

The theory of semi-rings is well developed 
(see for instance \cite{Golan}, \cite{Pareigis}). It has important applications in various areas: tropical geometry (\cite{Ak}, \cite{Br}), computer science, tropical algebra (\cite{Iz}, \cite{Lescot}), mathematical physics (\cite{Li}). In the references of these papers the reader will find 
many  other interesting  works in these areas.

Thanks to Connes-Consani, the semi-ring theory of characteristic $1$  plays also  a very interesting role in Number Theory: 
\cite{CC1}, \cite{CC2}, \cite{CC3}, \cite{CC4}. For instance they constructed an Arithmetic Site (for $\Q$) which encodes the Riemann Zeta Function. They analyzed the contribution of the archimedean place of $\Q$ through the 
semi-field $(\R, {\max}\, ( ,), +)$, the multiplicative action of $(\R^{+*}, \times)$  on $\R$ being an analogue of the Frobenius. 

In constructing Arithmetic sites of Connes-Consani type for other numbers fields $K$, 
 Sagnier( \cite{Sa}) was naturally lead to more general semi-fields of characteristic $1$. They are  associated to some tropical geometry.
For example, in the case $K=\Q[i]$, \cite{Sa} introduced the set $\cR_c$   of compact convex (polygons) of $\C$ which are invariant under multiplication by $i$. Then he 
 analyzed the contribution of the archimedean  place of $\Q[i]$
through the semi-field of fractions of the cancellative perfect semi-ring:
$$
(\cR_c, \oplus= conv\,( , ), + )\,.
$$ 

\s

Furthermore, Connes-Consani (\cite{CC1}, \cite{C}) have developed a very interesting theory of Witt vectors in characteristic $1$. Following \cite{C} one defines the analogue of a $p-$adic metric on a perfect semi-field $\cF$ in the following rough way (see Section 2 for details).
First recall  the well-known partial order on $\cF$
 defined by: $X \leq Y $ if $X\oplus Y= Y$. 
Next, it turns out that $\cF$ carries a natural structure of $\Q-$vector space.  
 Assume then the existence of $E \in \cF$ such that for any $X \in \cF$ there exists  $t  \in \Q^+$ 
such that $-t E \leq X \leq tE$ (see Assumption \ref{ass:1}). 
Denote by $r(X)\in \R^+$  the infimum of all such $t \in \Q^+$ and assume moreover that $r(X)=0$ implies  $X=0$ (see Assumption \ref{ass:2}).
Then $(X,Y) \mapsto r(X-Y)$ defines a distance on $\cF$; we say that 
$(\cF, \oplus, +)$ is a Banach semi-field if $\cF$ is complete for this distance.


The norm $r(X)$ satisfies also the following inequality of non-archidemean type:
$$
\forall X, Y \in \cF, \; r(X \oplus Y) \leq \max ( r(X) , r(Y) )\,.
$$
Thus there is also an analogy 
between the  concept of commutative perfect Banach semi-field of characteristic $1$ and the one of perfect nonarchimedean Banach field. 
The latter has been investigated by 
  deep works in the nonarchimedean world (\cite{Be}, \cite{Kedlaya},  \cite{KL}), 

\s
{\it 
Therefore, it seems relevant to  try first to
 classify  the  commutative Banach perfect semi-fields $(\cF, \oplus, +)$ of characteristic $1$. }
To this aim, it is natural to associate to $(\cF, \oplus, +)$ the set $S_E(\cF)$ of  the characters $\phi: (\cF, \oplus, +) \rightarrow (\R, \max ( \,) , +)$
such that $\phi(E)=1$. More precisely, $\phi$ satisfies the following:
$$
\forall X, Y \in \cF,\; \phi(X+Y)= \phi(X) + \phi(Y),\; \phi(X \oplus Y)= \max ( \phi(X) , \phi(Y) )\,.
$$
We endow $S_E(\cF)$ with a natural topology $\cT$ making it a compact space, we may call it the spectrum of $\cF$.
Then we construct (see Theorem \ref{thm:Tilby} for details) an isometric isomorphism of Banach semi-fields of Gelfand-Naimark type: 
 
\begin{equation} \label{eq:Tilby}
\Theta: (\cF, \oplus, +)  \rightarrow (C^0(S_E(\cF), \R), {\max}\,( ,), +)
\end{equation}
$$
X \mapsto \Theta_X: \phi \mapsto \phi(X)= \Theta_X (\phi)\,.
$$  Thus $X\in \cF$ is identified with the continuous function $\Theta_X$ on the spectrum $S_E(\cF)$.

\m
Now we describe briefly the content of this paper. 
 
 In Section 2.1 we define the concept of commutative perfect semi-field $(\cF, \oplus, +)$
of characteristic $1$  and introduce some basic material. Actually we need to introduce  on $\cF$
 the concept of $F-$norm which is more general than the one given by $r(X-Y)$. By definition a $F-$norm $\| \, \|$ 
 satisfies:
 $$
 \| X \oplus Y - X' \oplus Y' \| \, \leq \, \max ( \| X-X'\| , \| Y-Y'\| )\,.
 $$
If $(\cF, \| \, \|)$ is complete then,  guided by \cite{C}, we construct a continuous Frobenius action of 
$(\R^{+*}, \times)$ on $\cF$ and we obtain a natural real vector space structure on $\cF$. We also prove various Lemmas showing 
that $r(X)$ has morally the properties of a spectral radius.  

In Section 2.2 we introduce the set $S_E(\cF)$  of the normalized characters  and endow it with the weakest topology (called $\cT$) rendering continuous all the maps $\phi \mapsto \phi(X)$, $X \in \cF$. 
In Theorem \ref{thm:char} we prove that $(S_E(\cF), \cT)$ is a compact topological space. 

In Section 3 we define and study algebraically the notion of congruence  for $(\cF, \oplus, +)$. 
A  congruence $\sim$ is the analogue in characteristic $1$ of the concept of ideal in classical Ring theory.
The quotient map  $\pi: \cF \rightarrow \cF /\sim$ defines a homomorphism of semi-fields of characteristic $1$. If $\sim$ is closed then the norm $r(X-Y)$ induces a $F-$norm on the quotient $\cF /\sim$ which is stronger  than the  norm $r_\sim (\pi(X)-\pi(Y))$ associated to $\pi(E)$.
When  $\cF$ is complete for $r(X-Y)$,  it does not seem possible to see (directly) that $\cF /\sim$  is complete for the norm $r_\sim (\pi(X)-\pi(Y))$. This is why we introduced the concept 
of $F-$norm which, in some sense, is stable by passage to the quotient.
 
Maximal congruences $\sim$ are shown to be closed, and a Gelfand-Mazur  type theorem is proved for such ones (see Theorem \ref{thm:gm}):  
$$( \cF/ \sim, \oplus, +)  \simeq (\R, \max( \, , ), +)\,.
$$   The proof uses a suitable spectral theory for the elements of $\cF$, obtained from the partial order of $(\cF, \oplus, +)$.
Notice that, in this setting, there is no available theory of holomorphic functions. 
Then we show that for each $X \in \cF$ there exists  $\phi \in S_E(\cF)$, $| \phi(X) |= r(X)$. In particular 
the set $S_E(\cF)$ is not empty.

In Section 4 we state and prove our classification result Theorem \ref{thm:Tilby}. We also prove that 
if $\cF$ is  complete for a $F-$norm  then the map $\Theta$ of \eqref{eq:Tilby} is only an injective continuous 
homomorphism with dense range.

As a first application, we consider the real Banach algebra of Witt vectors $\overline{W}( \cF, E)$ that  Connes-Consani (\cite{CC1}, \cite{C}) constructed  
functorially with respect to $(\cF, E)$. The authors observed that its norm is not $C^*$.
Nevertheless, they   pointed out that the spectrum of  $\overline{W}( \cF, E)$ should contain some interesting information for $(\cF, \oplus, +)$. 
 By applying first our  Theorem \ref{thm:Tilby} and then \cite{C}[Prop.6.13], 
we prove that the spectrum of $\overline{W}( \cF, E) \otimes \C$ coincides with $S_E(\cF)$, see  Theorem \ref{thm:Connes}. Actually the Connes-Consani's construction 
of Witt vectors is done in a slightly broader context. It would be interesting to explore the connexion between our approach and \cite{C}, we shall try to bring a contribution to this in a separate paper.

We give several other applications of Theorem \ref{thm:Tilby}. 
Geometric characterization of the elements of $\cF$ which are $\geq 0$, regular, absorbing.
Determination of the characters of $(C^0(K,\R), \max( ,), +)$, $K$ being  a compact space. Determination of the characters of the semi-ring $\cC$ of the convex compact subsets of $\R^n$ which contain the origin $0$.  
More precisely   consider a map  $\phi : \cC \rightarrow \R$  such that:
$$
\forall A,B \in \cC,\; \phi ( {\rm conv} \, (A \cup B) ) = \max ( \phi ( A), \phi (B)) ,\; \phi(A+B) = \phi(A) + \phi (B)\,,
$$ where ${\rm conv} \, (A \cup B)$ denotes the convex hull.
Then we show the existence of $\psi \in (\R^n)^*$ such that $\forall A \in \cC,\; \phi(A) = \max_{v\in A} \psi(v)$.
In other words, the spectrum $S_E( \cC)$ of the semi-ring $(\cC , \oplus= {\rm conv} ( \cdot  \cup  \cdot ), +) $  is an euclidean sphere of $(\R^n)^*$.

\s
In Section 5 we determine the closed congruences of $(\cF, \oplus , +)$ by using Theorem \ref{thm:Tilby}, 
 its proof and Urysohn. Via Theorem \ref{thm:Tilby}, they correspond bijectively to the closed subsets of $S_E(\cF)$: see Theorem \ref{thm:Tilby'}.

\s
In Section 6 we consider a commutative cancellative perfect semi-ring of characteristic $1$ $(\cR, \oplus, +)$, 
see Definition \ref{def:gen}. We apply the tools constructed in the previous sections to the study of $\cR$.

We first establish a nice one to one correspondence between cancellative congruences on $\cR$ and congruences on its semi-field of fractions $\cF$.
 Then, following again
 \cite{C},
we make Assumptions
\ref{ass:c}  and \ref{ass:d} which allow to define the distance $r(X-Y)$. Denote by $S_E(\cR)$  the (compact) set of normalized characters of  $\cR$.
 We prove that the map:
\begin{equation} \label{eq:embR}
j\, :\, (\cR, \oplus, +) \rightarrow (C^0(S_E(\cR), \R), \max( ,), +) 
\end{equation} 
$$
X \mapsto j_X: \phi \mapsto \phi(X)=\, j_X (\phi)
$$
defines an  injective homomorphism of  semi-rings. See Theorem \ref{thm:mapj}.
   
Next we assume that $\cR$ is Banach (i.e complete for the distance $r(X-Y)$) and consider a maximal cancellative congruence $\sim$ on $\cR$. We then prove 
a Gelfand-Mazur type Theorem (see Theorem \ref{thm:gmring}). Precisely,  if all the elements of the quotient $\cR / \sim$ 
are $\geq 0$ then one has:
$$
(\cR/ \sim , \oplus, +) \, \simeq \, (\R^+ , \max ( \, , ) , +)\,; 
$$ otherwise one has $\cR/ \sim \,\, \simeq \R$.

 Lastly, in the special case $\cR = \{ X \in \cF / \, 0 \leq X \}$, 
 we show that  $\cF$  is Banach and that the range of the map \eqref{eq:embR} is exactly $C^0(S_E(\cR) , \R^+)$. See Theorem 
\ref{thm:banring}.


\s

In Section 7, we have algebraic goals in mind and 
use our previous results to propose the foundations of a new scheme theory in characteristic $1$. Notice that 
several interesting scheme theories, different from ours,  already exist in the tropical setting:  \cite{Iz}, \cite{Lor}, \cite{To}.
 We consider a cancellative commutative semi-ring $(\cR, \oplus, +)$, with semi-field of fractions $\cF$, satisfying 
 conditions stated in Assumption \ref{ass:last}. 

Our goal is to
construct algebraic tools allowing  to decide whether 
an element of   $\cF$ belongs or not to $\cR$. We would like also to detect sub semi-rings of $\cR$ which have some arithmetic flavor.
A basic example is provided by the semi-ring $\cR_c( [0,1])$ of the piecewise affine convex functions on $[0,1]$ 
and its semi-field of fractions $\cF_c( [0,1])$. An example of  sub semi-ring of $\cR_c( [0,1])$ having an arithmetic flavor 
  is given by the set of  functions $ x \mapsto \max_{1 \leq j \leq n} (a_j x + b_j)$ 
  where the $a_j, b_j$ belong to a sub field $\K$ of $\R$.



Our previous results enable us to   define  a topology $\cZ$ on $S_E( \cF)$ and prove that it satisfies  properties of  Zariski type. (See \cite{Lescot} for a similar Zariski topology  but on a different set). Then, motivated by the case of the semi-ring of piecewise affine functions 
on $[0,1]$, we define for each $\phi \in S_E(\cR)= S_E(\cF)$ the concept of valuation 
$V_\phi : \phi^{-1}\{0\} \cap \cR \rightarrow \R^+$. This map is additive and satisfies $\max ( V_\phi(X), V_\phi(Y) ) \leq V_\phi( X \oplus Y)$.
Next, guided by an   analogy with classical algebraic geometry, we define the concepts of $v-$local semi-ring $(\cR, \phi, V_\phi)$
 and of localization along $(\phi, V_\phi)$. 

 Then we introduce the notion of localization data on $S_E(\cF)$,  which enables us to globalize these constructions in the framework of sheaf theory. Their existence is an hypothesis, 
 it allows to define  
 a structural sheaf 
$\cO$ on $S_E( \cF) $, this leads to the concept of affine scheme $(S_E(\cF), \cO)$ in characteristic $1$. More generally we define the notion 
of $v-$locally semi-ringed spaces $(S , \mathcal{O})$; it is a scheme if in addition it is locally isomorphic to an open subset of an affine scheme. A natural example 
is provided by $\R/ \Z$ and the piecewise affine functions.

\m

V. Kala kindly pointed out to us the references \cite{Ka}[Thm.4.1] and  \cite{Mu}[Thm.5.1]. There, the authors have proved interesting structure theorems  for finitely generated semi-fields which admit 
an order-unit but do not satisfy necessarily our Assumption \ref{ass:2}.
Of course a perfect semi-field is far from being finitely generated. Nevertheless, e-mail exchanges with V. Kala suggest that the comparison of 
the methods of this paper with the ones of \cite{Ka} and \cite{Mu} should lead to something interesting.  

\b
\noindent {\bf Acknowledgement.} The author thanks S. Simon and G. Skandalis  for interesting 
 conversations and Etienne Blanchard for helpful comments. Part of this work was done while the author was visiting the University of Savoie-Mont Blanc, he would like to thank 
P Briand for the kind hospitality. Finally, the memory of Tilby has been  a great motivation to achieve this work.

\s
\section{Definitions, first properties and the spectrum $S_E(\cF)$.} $\;$

\subsection{Definition of a commutative Banach perfect semifield of characteristic $1$.} $\;$

\m

We first define precisely the class of semi-fields we shall be interested in.
\begin{definition}  \label{def:1} A  commutative perfect semifield of characteristic $1$ 
is  a triple $(\cF, \oplus, +)$ where $\cF$ is a set endowed with two laws $\oplus, +$  such that the
 following conditions are satisfied: 
 \item 1] $(\cF, \oplus)$ is a commutative and associative monoid such that $\forall X \in \cF, X \oplus X= X$.
\item 2] $(\cF, +)$ is an abelian group (with identity element $0$) such that 
$$
\forall X,Y, Z \in \cF,\; X+ (Y\oplus Z)= (X+Y) \oplus (X+Z)\,.
$$
\item 3] For any $n \in \N^*$, $\theta_n:\, X \mapsto n X$ is a (set theoretic) surjective map from $\cF$ onto $\cF$, where 
$n X = X + \ldots + X$, $n$ times.
\end{definition}
\begin{remark} Usually, one adds to $\cF$ a (ghost) element denoted $-\infty$ and require that for all $X \in \cF,\, 
-\infty \oplus X= X$ and $-\infty + X= -\infty$. But, since this element will play no role for our goal, we shall forget it. Therefore,
$\oplus$ has no identity element in our framework.
\end{remark}
\s By a straightforward and well known computation, the conditions of Definition 1, imply:
\begin{equation} \label{eq:computation}
\forall (n, X, Y) \in \N^*\times \cF \times \cF ,\; \; n( X \oplus Y ) =  \displaystyle{ \oplus_{k=0}^n\,\bigl( k X + (n-k) Y \bigr)}\,.
\end{equation}
Given the cancellative property of the group law $+$ and Definition \ref{def:1}.3], one has the following fundamental 
result.

\begin{proposition} \label{prop:Go} (\cite{Golan}[Prop. 4.43], \cite{C}[Lemma 4.3]). Let $(\cF, \oplus, +)$ be as in the previous Definition.
Then:
\item 1] For any $n \in \N^*$, the map $\theta_n:\, X \mapsto n X$ is injective (and thus bijective) and induces 
an isomorphism of $(\cF, \oplus, +)$.
\item 2] For any $n \in \N^*$, the map $\theta_{-n}: \, X \mapsto (-n)X= -(nX)$ is bijective from $\cF $ onto $\cF $ 
and induces an isomorphism of the additive group $(\cF, +)$.
\end{proposition}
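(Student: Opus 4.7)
I would deduce 2] immediately from 1], since $\theta_{-n}=\iota\circ\theta_n$ where $\iota(X)=-X$ is the additive inversion: both are bijective additive endomorphisms of $(\cF,+)$, hence so is the composition. Statement 1] reduces to showing (a) $\theta_n$ is injective and (b) $\theta_n$ preserves $\oplus$ (additivity being immediate from the commutativity and associativity of $+$, and surjectivity being Definition~\ref{def:1}.3]).

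\emph{Injectivity.} Since $\theta_n$ is an endomorphism of the abelian group $(\cF,+)$, it suffices to check that $nZ=0$ forces $Z=0$. I would apply \eqref{eq:computation} to the pair $(Z,0)$ with parameters $n$ and $2n$:
\[
n(Z\oplus 0)=\bigoplus_{k=0}^n kZ,\qquad 2n(Z\oplus 0)=\bigoplus_{k=0}^{2n}kZ.
\]
The hypothesis $nZ=0$ yields $(n+j)Z=jZ$ for $0\le j\le n$, and by idempotency of $\oplus$ the second sum collapses onto the first. Setting $W:=n(Z\oplus 0)$ this reads $2W=W$, which in the abelian group $(\cF,+)$ forces $W=0$. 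The first display then becomes $\bigoplus_{k=0}^n kZ=0$; singling out the terms $k=0$ and $k=1$ gives $Z\oplus 0=0$, i.e.\ $Z\le 0$ for the partial order $X\le Y\iff X\oplus Y=Y$. Repeating the argument with $-Z$ (which also lies in the kernel of $\theta_n$) gives $-Z\le 0$, and antisymmetry forces $Z=0$.

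\emph{Preservation of $\oplus$.} The inequality $n(X\oplus Y)\ge nX\oplus nY$ is immediate from \eqref{eq:computation} (take $k=n$ and $k=0$). For the reverse inequality my plan is to compute $n^2(X\oplus Y)$ and $n(nX\oplus nY)$ by applying \eqref{eq:computation} twice---the former by expanding $n\cdot n(X\oplus Y)$ with \eqref{eq:computation} at parameter $n^2$, the latter by substituting $(nX,nY)$ for $(X,Y)$ in \eqref{eq:computation}---and then to invoke the injectivity proved in the previous step to reduce the identity $n(X\oplus Y)=nX\oplus nY$ to the absorption of the non-extreme summands $mX+(n^2-m)Y$, for $m$ not a multiple of $n$, into the sub-sum indexed by multiples of $n$. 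I expect this absorption step to be the main obstacle: because the $\oplus$-order on $\cF$ need not be total, incomparability between intermediate and extreme terms cannot be ruled out by a simple sign argument, so the proof must exploit the algebraic bijectivity of $\theta_n$ rather than purely order-theoretic reasoning.
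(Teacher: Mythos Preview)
Your reduction of 2] to 1] and your injectivity argument are both fine. (For the ``singling out'' step: each summand $kZ$ satisfies $kZ\le\bigoplus_{j=0}^n jZ=0$, in particular $Z\le 0$; and the equivalence $-Z\le 0\Leftrightarrow 0\le Z$ follows by adding $Z$ to both sides of $(-Z)\oplus 0=0$ via distributivity, so no circularity with later lemmas.)

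The gap is in your treatment of $\theta_n(X\oplus Y)=\theta_n(X)\oplus\theta_n(Y)$. Your plan---expand $n^2(X\oplus Y)$ and $n(nX\oplus nY)$ and hope the non-multiple-of-$n$ terms get absorbed---runs into exactly the obstacle you name: there is no order-theoretic reason for $mX+(n^2-m)Y$ to be dominated by the extreme terms, and invoking injectivity of $\theta_n$ does not help, since both expansions are already equal (they both equal $\bigoplus_{k=0}^{n^2}(kX+(n^2-k)Y)$), so cancelling $\theta_n$ just returns you to the original question.

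The paper's route avoids this entirely by a direct cancellation in the \emph{group} $(\cF,+)$ rather than via injectivity of $\theta_n$. Using \eqref{eq:computation} and distributivity one checks
\[
(nX\oplus nY)+(n-1)(X\oplus Y)\;=\;\bigoplus_{k=0}^{2n-1}\bigl(kX+(2n-1-k)Y\bigr)\;=\;(2n-1)(X\oplus Y),
\]
while trivially $(2n-1)(X\oplus Y)=n(X\oplus Y)+(n-1)(X\oplus Y)$. Now cancel $(n-1)(X\oplus Y)$ using that $(\cF,+)$ is a group. This yields $nX\oplus nY=n(X\oplus Y)$ in one line, with no appeal to injectivity and no absorption argument. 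Note in particular that the logical order is the reverse of yours: preservation of $\oplus$ comes from additive cancellativity alone, and injectivity is a separate matter.
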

It is worth to outline that the injectivity of $\theta_n$ is  a fundamental  fact which uses 
a subtle interplay between $\oplus$, $+$ and the cancellativity property of $+$. Moreover, the following property 
is also non trivial.
$$
\forall (n, X, Y) \in \N^* \times \cF \times \cF,\; \theta_n(X \oplus Y)= \theta_n(X) \oplus \theta_n(Y)\,.
$$ The starting idea of the proof is to  observe that \eqref{eq:computation} implies the following:
$$
( n X \oplus n Y) \, + \,(n-1) ( X \oplus Y) \, = \, (2n-1) (X \oplus Y)\, = n (X \oplus Y) + (n-1) (X \oplus Y).
$$

Notice of course that $-(X \oplus Y) $ {\bf is not equal to} $-X \oplus -Y$. Indeed, see 
Lemma \ref{lem:oplus}.4].

\m

The next Lemma explains how the conditions of Definition \ref{def:1} allow to endow $\cF$ with a natural 
structure of $\Q-$vector space.

\begin{lemma} \label{lem:Frob} 
\item 1] The equality 
$$\theta_{a/b}= \theta_a \circ \theta_b^{-1}, \; (a,b) \in \N^* \times \N^*\,,$$
defines an action of $\Q^{+*}$ on $(\cF, \oplus, +)$ satisfying:
$$
\forall (t,t', X)\in \Q^{+*} \times \Q^{+*} \times \cF,\; \,
\theta_{t t'}= \theta_t \circ \theta_{t'} ,\;  
\theta_t(X) + \theta_{t'}(X) = \theta_{t + t'}(X)\,.
$$ 
\item 2]  Denote by $\theta_0$ the (zero) map sending $X \in \cF$ to $0\in \cF$ and, 
  for all $(t,X)\in \Q^{+*}\times \cF$ set:
$$
\theta_{-t}(X)= -\theta_t (X) =\theta_t (-X) \,.
$$
 Then the maps 
$\theta_0$ and  $\theta_t$, $t \in \Q$
 endow $(\cF, +)$ with a structure 
of  $\Q-$vector space.
\end{lemma}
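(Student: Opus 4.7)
The plan is to bootstrap from the fact that the integer maps $\theta_n$ for $n \in \N^*$ pairwise commute. Indeed, associativity of $+$ gives $\theta_m \circ \theta_n = \theta_{mn} = \theta_n \circ \theta_m$, and inverting both sides of this identity (using Proposition \ref{prop:Go}.1] to know $\theta_n$ is bijective) shows that $\theta_n^{-1}$ commutes with $\theta_m$, and hence $\theta_m^{-1}$ commutes with $\theta_n^{-1}$. Thus the collection $\{\theta_n, \theta_n^{-1} : n \in \N^*\}$ generates a commutative subgroup of the bijections of $\cF$.

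For well-definedness of $\theta_{a/b}$ in Part 1], suppose $a/b = c/d$ with $a,b,c,d \in \N^*$, so $ad = bc$. Then $\theta_a \circ \theta_d = \theta_{ad} = \theta_{bc} = \theta_c \circ \theta_b$, and composing on both sides with $\theta_b^{-1}$ and $\theta_d^{-1}$ and using commutativity yields $\theta_a \circ \theta_b^{-1} = \theta_c \circ \theta_d^{-1}$. The multiplicativity $\theta_{tt'} = \theta_t \circ \theta_{t'}$ for $t = a/b,\, t' = c/d$ then follows at once: $\theta_t \circ \theta_{t'} = \theta_a \circ \theta_b^{-1} \circ \theta_c \circ \theta_d^{-1} = \theta_{ac} \circ \theta_{bd}^{-1} = \theta_{ac/(bd)}$, again by commutativity. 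For the sum formula $\theta_t(X) + \theta_{t'}(X) = \theta_{t+t'}(X)$, I would reduce to a common denominator: set $Z = \theta_{bd}^{-1}(X) = \theta_{1/(bd)}(X)$, so $X = bd\, Z$. Then $\theta_t(X) = \theta_a(\theta_b^{-1}(bd\,Z)) = \theta_a(dZ) = ad\, Z$, and similarly $\theta_{t'}(X) = bc\, Z$. Adding gives $(ad+bc)Z = \theta_{(ad+bc)/(bd)}(X) = \theta_{t+t'}(X)$, as wanted.

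For Part 2], the key point is that $\theta_{-t}(X) := -\theta_t(X)$ and $\theta_t(-X)$ agree, so the definition is unambiguous. This reduces to showing that $\theta_t$ is an additive homomorphism for $t \in \Q^{+*}$, which follows from Proposition \ref{prop:Go}.1] (each $\theta_n$ is an automorphism of $(\cF,+)$, hence so is $\theta_b^{-1}$, hence so is any composition $\theta_a \circ \theta_b^{-1}$). With this in hand, I would then verify the $\Q$-vector space axioms one by one: $\theta_1 = \mathrm{id}$ is clear; $\theta_t(X+Y) = \theta_t(X) + \theta_t(Y)$ extends from $\Q^{+*}$ to all of $\Q$ by the definition of $\theta_{-t}$ and $\theta_0$; $\theta_{tt'} = \theta_t \circ \theta_{t'}$ for all $t, t' \in \Q$ is checked by splitting on signs (the case of two negatives follows by writing $\theta_{(-t)(-t')}(X) = \theta_{tt'}(X) = -\theta_t(-\theta_{t'}(X))$ and the mixed-sign and zero cases are routine); and finally $\theta_{t+t'}(X) = \theta_t(X) + \theta_{t'}(X)$ is extended from $\Q^{+*}$ to $\Q$ similarly by case analysis.

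The main obstacle is essentially bookkeeping: the substantive ingredient is already furnished by Proposition \ref{prop:Go}, namely the bijectivity and additive-automorphism property of each $\theta_n$. Once one has the pairwise commutativity of the $\theta_n$ and $\theta_n^{-1}$, all the identities reduce to elementary manipulations at a common denominator $bd$, with the additive group structure of $(\cF,+)$ doing the rest of the work.
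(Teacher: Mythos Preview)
Your proposal is correct and follows essentially the same route as the paper. For Part~1] the paper simply cites \cite{C}[Prop.~4.5], whereas you spell out the standard common-denominator argument; for Part~2] the paper isolates the single key identity $\theta_{t'-t}(X)+\theta_t(X)=\theta_{t'}(X)$ for $0\le t\le t'$ (which is exactly your mixed-sign case) and derives the rest from it, so your case analysis is just a slightly more expansive version of the same reduction.
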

\begin{proof} 1] This is proved in \cite{C}[Prop. 4.5]. 

\noindent 2] Given part 1],  it suffices to check the following for  all $t, t' \in \Q$ such that 
$0 \leq t\leq t'$:
\begin{equation} \label{eq:vect}
\forall X \in \cF,\;  \theta_{t'-t} (X) = \theta_{t'}(X) -\theta_{t}(X),\; \theta_{t-t'} (X) = 
\theta_{t}(X) -\theta_{t'}(X)\,.
\end{equation} The second 
equality of \eqref{eq:vect} follows from the first by using the inverse for $+$.
The equality $ \theta_{t'-t} (X) = \theta_{t'}(X) -\theta_{t}(X)$ is equivalent to 
$  \theta_{t'-t} (X) + \theta_{t}(X)= \theta_{t'}(X)$. But  this is true thanks to part 1].  
\end{proof}
\s It is time now to simplify the notation.
\begin{definition} Let $t\in \Q$ and $X\in \cF$. In the sequel, we shall set
$t X = \theta_t(X)$.  But  one should keep in mind that
for $t\in \Q^{+*}$, $X \mapsto t X$ defines the analogue of the Frobenius action in characteristic $p$ (see \cite{C}[Prop.4.5]).

\end{definition}
\s
We now recall the  definition of the canonical partial order of $(\cF , \oplus, +)$.

\begin{definition} \label{def:order} Any  semifield $\cF$ as in Definition \ref{def:1} is endowed with the partial order $\leq$ defined by:
$$
\forall X, Y \in \cF,\; X \leq Y \; {\rm iff}\; X\oplus Y = Y\,.
$$ The set of positive  ($\geq 0$) elements, $\{X\in \cF /\,\, 0\oplus X=X\}$, defines a perfect semiring which is cancellative.

\end{definition}

\m
Next, we give two explicit examples of such $(\cF, \oplus, +)$.
 
\begin{example} The two most elementary examples of perfect semifields (as in Definition \ref{def:1}) are given by 
$(\Q, {\max}\,( , ) , + )$ and
$(\R, {\max}\,( , ) , + )$. The partial order is here of course the usual one.
\end{example}
\begin{example} Endow the set $\cF_{paf}$ of piecewise affine functions $[0,1] \mapsto \R$  with 
the  law defined by
 $$\forall X,Y \in \cF,\;
 \forall t \in [0,1],\; (X \oplus Y)(t)= {\max}\,(X(t),Y(t))\,.$$ Denote by $+$  the usual addition of functions: $X+Y$.  Then 
$(\cF_{paf}, {\max}\,( , ) , + )$   is a commutative 
perfect semifield of characteristic $1$. Moreover, with the notations of Definition \ref{def:order},  $X\leq Y$ if and only if 
$\forall t \in [0,1], \; X(t) \leq Y(t)$.
\end{example}

\m
In the next Lemma we collect a few properties of this partial order $\leq$.
\begin{lemma} \label{lem:oplus} Let $X,Y,Z \in \cF$ and $t \in \Q^{+*}$. Then the followings are true:
\item 1] If $X \leq Y$ then one has:
$$X + Z \leq Y +  Z,  \; \,X \oplus Z \leq Y \oplus  Z,\;\, t X \leq t Y\,.
$$
\item 2] $ X \leq Y \; \Leftrightarrow \; 0 \leq Y-X \; \Leftrightarrow\;  -Y \leq -X$.
\item 3] $X= (0\oplus X) - (0 \oplus -X)$.
\item 4] $X+Y= X\oplus Y -( -X \oplus -Y)$.
\end{lemma}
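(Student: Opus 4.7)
My plan is that all four parts are essentially immediate consequences of the distributivity of $+$ over $\oplus$, the idempotency of $\oplus$, and the fact (recorded in Proposition \ref{prop:Go} and Lemma \ref{lem:Frob}) that $\theta_t$ is an automorphism of $(\cF,\oplus,+)$ for every $t\in\Q^{+*}$. The only mild cleverness is using distributivity \emph{in reverse} to rewrite an expression of the form $W+(U\oplus V)$ as $(W+U)\oplus (W+V)$.

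For part 1], I would start from the defining relation $X\oplus Y=Y$ and compute directly: $(X+Z)\oplus(Y+Z)=(X\oplus Y)+Z=Y+Z$ by distributivity; $(X\oplus Z)\oplus(Y\oplus Z)=(X\oplus Y)\oplus(Z\oplus Z)=Y\oplus Z$ by commutativity, associativity and idempotency of $\oplus$; and $\theta_t(X)\oplus\theta_t(Y)=\theta_t(X\oplus Y)=\theta_t(Y)$, since Proposition \ref{prop:Go} together with the definition $\theta_{a/b}=\theta_a\circ\theta_b^{-1}$ shows $\theta_t$ respects $\oplus$ for $t\in\Q^{+*}$.

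For part 2], I would apply part 1] twice. Adding $-X$ (as the $Z$ in part 1]) to both sides of $X\leq Y$ gives $0\leq Y-X$, and the converse follows by adding $X$. Similarly, adding $-Y$ to $0\leq Y-X$ yields $-Y\leq -X$ (and the reverse direction is symmetric). This gives the chain of equivalences with no further algebra.

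For parts 3] and 4], the trick is to verify the equivalent ``cleared denominator'' form. Part 3] asserts $X+(0\oplus -X)=0\oplus X$; by one use of distributivity, $X+(0\oplus -X)=(X+0)\oplus(X+(-X))=X\oplus 0$, which is exactly what is needed. Part 4] asserts $(X+Y)+(-X\oplus -Y)=X\oplus Y$; distributivity gives $(X+Y)+(-X\oplus -Y)=((X+Y)-X)\oplus((X+Y)-Y)=Y\oplus X$. Both identities then follow by adding the inverse to the right. There is no real obstacle here; the only potential pitfall is to avoid the tempting but false identity $-(X\oplus Y)=(-X)\oplus(-Y)$ (which is precisely what part 4] corrects).
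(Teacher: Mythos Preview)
Your proposal is correct and follows essentially the same route as the paper. The paper only spells out the $tX\leq tY$ case of part~1] (via $\theta_t(X\oplus Y)=\theta_t(Y)$) and part~3] (via distributivity), leaving the rest to the reader; you have filled in exactly the arguments one would expect. The only cosmetic difference is in part~3]: the paper subtracts $X$ and verifies $-X+(0\oplus X)-(0\oplus -X)=0$, whereas you add $(0\oplus -X)$ and verify $X+(0\oplus -X)=0\oplus X$ --- the same distributivity computation, rearranged.
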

\begin{remark} The mental picture to have in mind is that $X \oplus Y$ is the $\max$ of $X$ and $Y$, whereas 
$-( -X \oplus -Y)$ is the minimum. Moreover, Part 3] shows that any $X\in \cF$ admits a canonical 
decomposition as the sum of its positive ($\geq 0$) part and negative ($\leq 0$) part.
\end{remark}
\begin{proof} 1] By Proposition \ref{prop:Go}.1], one has:
$t( X \oplus Y) = t Y = t X \oplus t Y$. This shows that $t X \leq t Y$. The rest of 1] as well as 2] are 
well-known and left to the reader.

3] Using the distributivity of $+$, one sees that the identity to be proved is equivalent to:
$$
0 = -X + (0 \oplus X) \;- (0 \oplus - X)= (-X +0) \oplus (-X + X)\; - (0 \oplus - X)\,.
$$ Since  the right hand side of the second equality is clearly zero, the result is proved. 4] can be proved similarly and is left to the reader.
\end{proof}

\s
The following Assumption requires the existence of an element $E$ which allows, in some sense, to absorb 
any other element of $\cF$.
\begin{assumption} \label{ass:1} There exists $E \in \cF$ such that  $\forall X \in \cF$, 
$$\exists t \in \Q^+,\,-t E \leq X \leq t E\,.$$
 Denote by $r(X) \in \R^+$ the infimum of  all  such $t\in \Q^+$.
\end{assumption}

\s
The next Lemma  shows that such an $E$ has to be $\geq 0$ and that $(X,Y) \mapsto r(X-Y)$ defines a pseudo-distance.

\begin{lemma} \label{lem:dist} Suppose that the previous Assumption is satisfied. Then the following are true.

\item 1] Let $X \in \cF$ such that $0 \leq X$. Then:
$$\forall t,t' \in \Q^{+},\;  t\leq t' \Rightarrow t X \leq t' X \,.$$
\item 2] One has necessarily: $0\leq E$.
\item 3] $\forall X, Y \in \cF$, $r(X)=r(-X)$ and $r(X+Y) \leq r(X) + r(Y)$.

\item 4] $\forall (t, X) \in \Q^+ \times \cF,\; r(t X) = t \,r (X)$.
\end{lemma}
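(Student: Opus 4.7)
The plan is to handle the four parts in order, each leveraging the fact that for positive rational $t$ the map $\theta_t$ is a semi-field automorphism by Proposition \ref{prop:Go} and Lemma \ref{lem:Frob}, and in particular preserves both $\oplus$ and the induced order $\leq$. For Part 1], I first note that when $0 \leq X$, applying $\theta_s$ for $s \in \Q^{+*}$ gives $0 \leq sX$ (with $s=0$ trivial). Setting $s = t' - t \geq 0$, I then add $tX$ to the inequality $0 \leq sX$ using Lemma \ref{lem:oplus}.1], obtaining $tX \leq tX + sX$; Lemma \ref{lem:Frob}.1] identifies the right-hand side with $t'X$, which yields $tX \leq t'X$.

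For Part 2], I would instantiate Assumption \ref{ass:1} at $X = E$ itself to produce $t \in \Q^+$ with $-tE \leq E \leq tE$. The left inequality, after adding $tE$ to both sides via Lemma \ref{lem:oplus}.1] and using the additive group structure to identify $-tE + tE$ with $0$, becomes $0 \leq (t+1)E$; then applying the order-preserving automorphism $\theta_{1/(t+1)}$ yields $0 \leq E$. For Part 3], the equality $r(X) = r(-X)$ is immediate from Lemma \ref{lem:oplus}.2], which shows that $-tE \leq X \leq tE$ is equivalent to $-tE \leq -X \leq tE$, so the two infima coincide. The subadditivity follows by choosing $t_1, t_2 \in \Q^+$ arbitrarily close to $r(X)$ and $r(Y)$ respectively with $-t_1 E \leq X \leq t_1 E$ and $-t_2 E \leq Y \leq t_2 E$, adding the bounds via Lemma \ref{lem:oplus}.1], and invoking Lemma \ref{lem:Frob}.1] to rewrite $-t_1 E - t_2 E$ as $-(t_1+t_2)E$, so that $r(X+Y) \leq t_1 + t_2$; passing to the infimum gives $r(X+Y) \leq r(X)+r(Y)$.

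Part 4] reduces to the observation that for $t \in \Q^{+*}$ the automorphism $\theta_t$ sends the condition $-sE \leq X \leq sE$ to $-tsE \leq tX \leq tsE$ (using Lemma \ref{lem:Frob}.2] for the identity $\theta_t(-sE) = -tsE$), so the set of admissible bounds for $tX$ is exactly $t$ times that for $X$, whence $r(tX) = t \cdot r(X)$; the case $t=0$ is trivial since $0 \oplus 0 = 0$ forces $r(0)=0$. The only subtle step, and the one I expect to be the main obstacle, is the ``$+1$'' trick in Part 2]: one cannot directly rescale $-tE \leq E$ by $1/t$ to deduce $0 \leq E$, since the Assumption does not preclude the degenerate possibility $t=0$ for the specific choice $X=E$; routing through $(t+1)E$ circumvents this by invoking the additive group structure first, after which division by the strictly positive rational $t+1$ becomes legitimate through an automorphism of the semi-field.
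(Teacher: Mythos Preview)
Your proof is correct in all four parts. Parts 1], 3], and 4] follow the same line as the paper (which in fact leaves 3] and 4] to the reader), so there is nothing to add there.

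Part 2] is where your argument genuinely differs from the paper's, and yours is the cleaner one. The paper decomposes $E = E_+ - E_-$ with $E_+ = 0 \oplus E$ and $E_- = 0 \oplus(-E)$, applies Assumption~\ref{ass:1} to $E_+ + E_-$ to obtain $E_+ + E_- \leq t E_+ - t E_-$, and after some manipulation using Part~1] (applied to $E_+ \geq 0$) arrives at $(t+1)E_- \leq (t+1)E_+$, whence $E_- \leq E_+$ and so $0 \leq E_+ - E_- = E$. You bypass the positive/negative decomposition entirely: instantiating Assumption~\ref{ass:1} at $X=E$ directly yields $-tE \leq E$, and a single addition followed by rescaling by $\theta_{1/(t+1)}$ finishes. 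Your route avoids the auxiliary objects $E_\pm$ and the somewhat opaque chain of inequalities; the paper's route has the minor virtue of illustrating the decomposition $X = X_+ - X_-$ that recurs later (e.g.\ Lemma~\ref{lem:spec}.3]), but for the bare statement $0 \leq E$ your argument is preferable. Your closing remark about the ``$+1$ trick'' is slightly mis-targeted---the real issue is not that $t$ might be zero but that $-tE \leq E$ does not immediately rescale to $0 \leq E$ for \emph{any} $t$---yet the fix you give is exactly right.
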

\begin{proof}1] Observe that $0 \leq (t'-t) X $ by Lemma \ref{lem:oplus}.1]. 
Then, one has:
$$
t' X \oplus t X\, = \, ( (t'-t )X + t X ) \oplus t  X= (t'-t)X \oplus 0 \; \, + \,
t X=
$$
$$(t'-t) X +t X=  t' X \,.
$$ This proves the result.

 2]  Write 
$$E_+ = 0 \oplus E,\,\; E_-=0 \oplus (-E)\,.$$
By  Lemma \ref{lem:oplus}.3], one has  $E= E_+ - E_-$. By Assumption 1 (and part 1])
there exists $t \in\Q^+$
such that $ E_+ + E_- \leq t E_+ - tE_- \leq (t+2) E_+ - tE_-$. This implies  that:
$$
(t+1) E_- \leq  (t+1) E_+\,.
$$ Using Lemma \ref{lem:oplus}.1], one obtains: $E_- \leq E_+$. Then 
Lemma \ref{lem:oplus}.2] implies that 
$$0 \leq E_+ - E_-= E\,.$$ This proves the result. Lastly, 3] and 4] are easy consequences of Lemma \ref{lem:oplus}
and are left to the reader.
\end{proof}
\s
\begin{assumption} \label{ass:2} For any $X \in \cF$, $r(X)=0$ if and only if $X=0$.
\end{assumption}

\s 
{\bf Until the end of Section 5  we shall suppose  that $(\cF, \oplus, +)$ is a semi-field $(\not=\{0\})$ as in Definition \ref{def:1} 
which satisfies the two  Assumptions \ref{ass:1} and \ref{ass:2}. } 

\s
  Lemma \ref{lem:dist} shows that $(X,Y) \mapsto r(X-Y)$
defines a distance on $\cF$. As pointed out in  \cite{C}[Sect.6], this distance is an analogue, in the characteristic $1$ setting,  of a $p-$adic distance. 
The following Lemma allows to conclude that,  finally, all the algebraic operations of $\cF$ are continuous for this distance. It also proves 
an inequality of ultra-metric (or non archimedean) type for the first law $\oplus$.

\begin{lemma} \label{lem:cont} 
\item 1] 
For any $X,X',Y,Y' \in \cF$ one has:
$$
r( X \oplus Y - X' \oplus Y') \, \leq \, \max ( r(X-X') , r(Y-Y') )\,.
$$

\item 2] For all $(X,Y) \in \cF\times \cF $, one has:
$r(X \oplus Y) \leq {\max}\, ( r(X), r(Y) )$.
\end{lemma}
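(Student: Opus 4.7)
The plan is to handle Part 2 as an immediate consequence of Part 1 by taking $X' = Y' = 0$: since $\oplus$ is idempotent one has $0 \oplus 0 = 0$, and $X - 0 = X$, so Part 1 gives $r(X \oplus Y) = r(X \oplus Y - 0 \oplus 0) \leq \max(r(X), r(Y))$ at once. The substance therefore lies in Part 1, which I would prove directly from the infimum definition of $r$ together with the compatibility of $\leq$ with the algebraic operations.

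Fix arbitrary rationals $t > r(X-X')$ and $t' > r(Y-Y')$. Exploiting Lemma \ref{lem:dist}.1] (which, together with $E \geq 0$ from Lemma \ref{lem:dist}.2], allows one to enlarge the bounds monotonically), we may assume
$$-tE \leq X - X' \leq tE, \qquad -t'E \leq Y - Y' \leq t'E.$$
Set $s = \max(t,t')$; the same monotonicity argument gives $-sE \leq X - X' \leq sE$ and $-sE \leq Y - Y' \leq sE$, equivalently $X \leq X' + sE$ and $Y \leq Y' + sE$, and symmetrically $X' \leq X + sE$ and $Y' \leq Y + sE$.

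The decisive step is then to combine these four inequalities using the monotonicity of $\oplus$ recorded in Lemma \ref{lem:oplus}.1]: applying it twice yields
$$X \oplus Y \; \leq \; (X' + sE) \oplus (Y' + sE) \; = \; sE + (X' \oplus Y'),$$
where the last equality uses the distributivity of $+$ over $\oplus$ from Definition \ref{def:1}.2]. Thus $X \oplus Y - (X' \oplus Y') \leq sE$. The symmetric computation (swapping $(X,Y)$ with $(X',Y')$) yields $X' \oplus Y' - (X \oplus Y) \leq sE$, and together these produce $-sE \leq X \oplus Y - X' \oplus Y' \leq sE$, hence $r(X \oplus Y - X' \oplus Y') \leq s = \max(t,t')$. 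Letting $t \downarrow r(X-X')$ and $t' \downarrow r(Y-Y')$ through rationals concludes Part 1.

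I do not anticipate a real obstacle: the only care required is in converting the infimum-defined quantity $r$ into honest bounds $-tE \leq \cdot \leq tE$ via a rational $t$ slightly above $r$, and in using distributivity cleanly to pass $sE$ through $\oplus$. Everything else is a direct invocation of the compatibility properties of the partial order $\leq$ with $+$ and $\oplus$ established in Lemma \ref{lem:oplus}.
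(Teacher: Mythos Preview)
Your proof is correct and follows essentially the same route as the paper: both arguments pick a rational bound strictly above $\max(r(X-X'),r(Y-Y'))$, use the monotonicity of $\oplus$ from Lemma~\ref{lem:oplus}.1] together with distributivity to obtain $X\oplus Y \leq (X'\oplus Y') + sE$ and its symmetric counterpart, and then deduce Part~2 from Part~1 via $X'=Y'=0$. The only cosmetic difference is that the paper chooses a single rational $r>\max(r(X-X'),r(Y-Y'))$ directly rather than two separate rationals $t,t'$ and their maximum.
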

\begin{proof} 1] Consider a rational number $r$ such that $r > \max ( r(X-X') , r(Y-Y') )$. 
By definition, $X \leq X' + r E$ and $Y \leq Y' + r E$. Then, by Lemma \ref{lem:oplus}.1] we can write:
$$
X \oplus Y \leq \, ( X' + r E) \oplus Y \leq \, ( X' + r E) \oplus (Y' + r E) \, = \,(X' \oplus Y') + r E\,.
$$ Similarly we obtain: $(X' \oplus Y') \leq (X \oplus Y ) + r E$. The result is proved. 

2] Just apply part 1] with $X'=Y'=0$ and use $0\oplus 0=0$.
\end{proof}

\s 
We shall need to consider also a distance $(X,Y) \mapsto \| X-Y\|$ which is a bit more general than 
the one defined by $r(X-Y)$. 

\begin{definition} \label{def:norm}

We shall call F-norm on $\cF$, a map 
$\|  \,\| : \cF  \rightarrow \R^+$ satisfying the following properties for valid all $(t,X, Y ) \in \Q \times 
\cF \times \cF$.

\smallskip
\item 1] $ \| t X \|= | t | \| X\|$, $\;\| X + Y \| \leq \| X \| + \| Y\|$, $\;r(X) \leq \| X \|$.

\s
\item 2] For any $X,X',Y,Y' \in \cF$:
$$
\| X \oplus Y - X' \oplus Y' \| \, \leq \, \max ( \|X-X'\| , \|Y-Y'\| )\,.
$$

\end{definition}

 Assumption \ref{ass:2} and the inequality $r(X) \leq \|X \|$ show that: 
  $\|X \|=0\, \Rightarrow \,X=0$.
Notice also  that by Lemmas \ref{lem:dist} and \ref{lem:cont}, $X \mapsto r(X)$ defines 
a $F-$norm in the sense of Definition \ref{def:norm}. 


\s
\begin{definition} \label{def:dist} Let $\cF$ be a perfect semi-field as in Definition \ref{def:1} endowed with a F-norm $\| \, \|$ 
in the sense of Definition \ref{def:norm}.
One says that $\cF$ is complete 
if it is complete for the distance $(X,Y) \mapsto \| X-Y\|$. 
One says that $\cF$ is Banach if it is complete for the distance 
$(X,Y)\mapsto r(X-Y)$.

\end{definition}

In Proposition \ref{prop:w} we shall see that the class of   semi-fields which are complete for a $F-$norm
is stable by passing to the quotient by a closed congruence. 
A priori, it does not seem possible to prove directly  a similar result   for the  class of Banach semi-fields. Nevertheless,  see Section 5.

Now, let us give an explicit example of a $F-$norm which 
is not equal to $ X \mapsto r(X)$.

\s
\begin{example} Denote by $\cF_0$  the set of all the Lipschitz functions $X: [0,1] \rightarrow \R$ 
such that $X(0)=0$. Then $(\cF_0 , \max( , ), +)$ is a semi-field satisfying Assumptions \ref{ass:1} and \ref{ass:2} 
with $E(t)=t$. One obtains a $F-$norm on $\cF_0$ by setting 
$$
\| X \| = \sup_{t,t' \in [0,1],\, t >t'}\, \frac{ | X(t) - X(t')| }{t-t'}\, .
$$ One has $r(X) = \displaystyle \sup_{t \in ]0,1]}\, \frac{ | X(t)| }{t}$. Consider the element $X_0$ of $\cF_0$ defined by  $t \mapsto X_0(t)= e^t -1$. Then an easy computation shows that 
$r(X_0) < \| X_0 \|$. Actually, $(\cF_0,\| \, \|) $ is complete, whereas the completion of $\cF_0$ for 
$X \mapsto r(X)$ is the real vector space of all the continuous functions on the Stone-Cech compactification of $]0, 1]$.
\end{example}

\s
The next Lemma shows that the completeness hypothesis allows to extend continuously  the action of $\Q^*$ on $\cF$ to one of  $\R^*$. 

\begin{lemma} \label{lem:ext} Let  $\cF$ be a semi-field which  is complete for a F-norm $\| \, \|$.

\item 1] The maps $\theta_r, r\in \Q^{+*}$ of Lemma \ref{lem:Frob} extend by continuity to an action 
$\theta_t, t \in \R^{+*}$ on $(\cF, \oplus, +)$. For each $(t,X) \in \R^{+*}\times \cF$,
set  $ t X= \theta_t(X)$  and $(-t) X = -\theta_{-t}(X)$. Then, 
endowed with this action and with $\theta_0$, $(\cF, +)$ becomes a $\R-$vector space.

\item 2] For any $(t,X) \in \R \times \cF $, one has $\| t X\|= | t | \| X\|$.

\item 3] For any $(t, X, Y) \in \R^+ \times \cF \times \cF $:
$$
X \leq Y \Rightarrow t X \leq t Y\,.
$$
\item 4] Let $(t,t', X)\in \R \times \R \times \cF$ be such that $0 \leq X$. Then:
$$
t \leq t' \, \Rightarrow \,  t X \, \leq t' X \,.
$$
\end{lemma}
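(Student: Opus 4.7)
The plan is to exploit the density of $\Q$ in $\R$, the completeness of $\cF$, and the continuity of both laws: $+$ is continuous by the triangle inequality of Definition~\ref{def:norm}.1], and $\oplus$ is continuous by Definition~\ref{def:norm}.2]. Every rational-parameter family of maps on $\cF$ that is uniformly Lipschitz in the parameter will then extend continuously to a real-parameter family. The first step is to fix $X\in\cF$ and estimate $\|\theta_r(X)-\theta_{r'}(X)\|$ for $r,r'\in\Q^{+*}$. Assuming $r\geq r'$, Lemma~\ref{lem:Frob}.1] gives $\theta_r(X)=\theta_{r-r'}(X)+\theta_{r'}(X)$, whence $\theta_r(X)-\theta_{r'}(X)=\theta_{r-r'}(X)$; Definition~\ref{def:norm}.1] then yields $\|\theta_r(X)-\theta_{r'}(X)\|=(r-r')\|X\|$. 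By completeness, $r\mapsto\theta_r(X)$ extends uniquely to a continuous map $t\mapsto\theta_t(X)$ on $\R^{+*}$, and it still satisfies $\|\theta_t(X)-\theta_{t'}(X)\|=|t-t'|\|X\|$.

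Next I verify that this extension retains the algebraic structure. The identities $\theta_{tt'}=\theta_t\circ\theta_{t'}$ and $\theta_t(X)+\theta_{t'}(X)=\theta_{t+t'}(X)$ pass to the real parameters by rational approximation together with the continuity of $+$ and of each $\theta_s$ in its argument (the latter being Lipschitz by the same argument as above: for rational $r$ one has $\|\theta_r(X)-\theta_r(Y)\|=r\|X-Y\|$, which extends to the real case). Similarly, $\theta_t$ respects $+$ and $\oplus$: continuity of $\oplus$ through Definition~\ref{def:norm}.2] is crucial here, because it guarantees that the rational identities $\theta_r(X\oplus Y)=\theta_r(X)\oplus\theta_r(Y)$ pass to the limit. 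Setting $(-t)X:=-(tX)$ for $t>0$ and $0\cdot X:=0$, the $\R$-vector space axioms reduce to the corresponding $\Q$-vector space axioms of Lemma~\ref{lem:Frob}.2] after a routine case split on signs (using $-(tX)=t(-X)$, which follows from $\theta_t$ respecting $+$).

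Part~2 follows at once: for $t\in\Q$, $\|tX\|=|t|\|X\|$ is Definition~\ref{def:norm}.1]; for $t\in\R$, picking $t_n\in\Q$ with $t_n\to t$, continuity of $t\mapsto tX$ and of the norm give $\|tX\|=\lim|t_n|\|X\|=|t|\|X\|$. For Part~3, the rational case $X\leq Y\Rightarrow tX\leq tY$ for $t\in\Q^{+*}$ is Lemma~\ref{lem:oplus}.1] (and $t=0$ is trivial); for $t\in\R^+$, write $X\oplus Y=Y$ as $t_nX\oplus t_nY=t_nY$ for rationals $t_n\to t$, and pass to the limit using continuity of $\oplus$ and of scalar multiplication.

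For Part~4 with $0\leq X$, the case $0\leq t\leq t'$ with both rational is Lemma~\ref{lem:dist}.1], and it extends to real $t\leq t'$ by rational approximation in the relation $tX\oplus t'X=t'X$. A consequence (the $t=0$ instance extended to all $s\in\R^+$) is that $\theta_s(X)\geq 0$ whenever $0\leq X$. The remaining sign cases reduce to the positive case: for $t\leq t'\leq 0$, apply it to $0\leq -t'\leq -t$ and negate via Lemma~\ref{lem:oplus}.2]; for $t\leq 0\leq t'$, use $tX\leq 0\leq t'X$ (the first inequality via Lemma~\ref{lem:oplus}.2] applied to $-tX=\theta_{-t}(X)\geq 0$, the second from $\theta_{t'}(X)\geq 0$). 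The main obstacle I anticipate is purely notational—keeping track of which algebraic identities are being used to pass through the limit—since the continuity of $\oplus$ provided by the $F$-norm axiom makes every transfer from $\Q$ to $\R$ essentially automatic once the Lipschitz bound on $\theta_r$ is in hand.
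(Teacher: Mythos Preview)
Your proof is correct and follows essentially the same route as the paper: the Lipschitz estimate $\|\theta_r(X)-\theta_{r'}(X)\|=|r-r'|\|X\|$ from the $\Q$-vector space structure and Definition~\ref{def:norm}.1], completeness to define $\theta_t$ for real $t$, and continuity of $\oplus$ (Definition~\ref{def:norm}.2]) to pass the identity $\theta_t(X\oplus Y)=\theta_t(X)\oplus\theta_t(Y)$ to the limit. The paper dispatches parts 2], 3], 4] in one line as ``easy consequences of Lemmas~\ref{lem:dist} and~\ref{lem:cont}'' left to the reader, whereas you have spelled them out, including the sign-case analysis for 4]; this is additional detail rather than a different argument.
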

\begin{proof} 1] Let $t \in \R^{+*}$ and $X \in \cF$. Consider two sequence of 
 rational numbers $(r_n), (r'_n)$ converging to $t$. By Lemma \ref{lem:Frob}.2] and Definition \ref{def:norm}.1], one has 
 for all $n,p \in \N$:
 $$
 \| r_n X - r'_n X \| = | r_n - r'_n| \| X\|, \; \| r_n X - r_{n+p} X \| = | r_n - r_{n+p}| \| X\| \,.
 $$
 It is then clear that 
 $(r_n X)$ and  $(r'_n X )$ define two Cauchy sequences in $\cF$ which converge in $\cF$ 
 to the same limit; we denote it $ t X$.
 
 Now, using Lemma \ref{lem:Frob}.1] and Definition \ref{def:norm}.2] (which states the continuity of $\oplus$),   one obtains  
 for any $X, Y \in \cF$:
 $$t (X \oplus Y) =  \lim_{n \rightarrow + \infty} r_n (X \oplus Y) = \lim_{n \rightarrow + \infty} (r_n X \oplus r_n Y) = t X \oplus t Y\,.
 $$
This shows that $ X \mapsto t X$ defines an automorphism of $(\cF, \oplus, +)$.

 Next proceeding as in the proof 
 of Lemma \ref{lem:Frob}.2], one demonstrates that $(\cF, +)$ becomes endowed with a structure of  $\R-$vector space. 
 The result is proved.
 
 Lastly, 2], 3] and 4] are easy consequences of Lemmas \ref{lem:dist}  and \ref{lem:cont}, and are left to the reader.
\end{proof}

\s
The  Lemmas \ref{lem:dist} and \ref{lem:ext} show that $X \mapsto \| X\|$ defines a  norm, in the usual sense, on  $\cF\, (\not= \{0\})$,  giving it
 the structure of a 
real Banach vector space. Let us give a concrete example.

\begin{example} \label{ex:K} Let $K$ be a compact topological space. Then $(C^0(K ; \R), {\max}( ; ) , +,)$ defines a commutative Banach perfect semi-field,   with $E$ being the constant function {\bf 1} equal   to $1$. Here the F-norm is given by 
 $r(X)= \sup_{t\in K} | X(t) |$, for any $X\in  C^0(K ; \R)$.
\end{example}



\s
The next Lemma suggests that we have a sort of spectral theory for the elements $X$ of $\cF$ where 
$r(X)$ plays the role of a spectral radius satisfying $r(X) \leq \| X \|$. A  complete semi-field $(\cF , \oplus, +)$,   for a $F-$norm, 
 is somewhat analogous to a commutative Banach algebra, whereas a Banach semi-field is 
  analogous to a commutative $C^*-$algebra.

\begin{lemma} \label{lem:spec} Let  $\cF $ be a complete semi-field for a $F-$norm. Then the following are true:
\item 1]  $r(E)=1$.
\item 2] For any $X \in \cF$, $-r(X) E \leq X \leq r(X) E$.
\item 3] Let $X \in \cF$. Then $r(X) = \max ( r ( 0 \oplus X) , r ( 0 \oplus -X))$.

\end{lemma}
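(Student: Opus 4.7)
The plan is to treat the three items in order: item 1] is an algebraic consequence of the standing assumptions, item 2] is a limit passage inside the positive cone, and item 3] reduces to items 1], 2] via the canonical positive/negative decomposition.

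For item 1], the inequality $r(E)\leq 1$ is immediate: $0\leq E$ (Lemma \ref{lem:dist}.2]) yields $-E\leq 0\leq E$ via Lemma \ref{lem:oplus}.2], so $-1\cdot E\leq E\leq 1\cdot E$. For the reverse, I argue by contradiction: if $r(E)<1$, pick a rational $t$ with $r(E)<t<1$ satisfying $E\leq tE$; rewriting this as $0\leq (t-1)E$ via Lemma \ref{lem:oplus}.2] and comparing with the inequality $0\leq (1-t)E$ coming from Lemma \ref{lem:dist}.1] applied to the positive element $E$, one obtains $(1-t)E=0$. Proposition \ref{prop:Go}.1] then forces $E=0$, and Assumption \ref{ass:1} would give $\cF=\{0\}$, contradicting the standing hypothesis.

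For item 2], the key structural fact is that the positive cone $\cF_+:=\{Y\in \cF\,:\,0\leq Y\}$ is closed for the $F$-norm: specialising Definition \ref{def:norm}.2] with $X=X'=0$ gives $\|0\oplus Y_n-0\oplus Y\|\leq \|Y_n-Y\|$, so if $Y_n\to Y$ and $0\oplus Y_n=Y_n$, then $0\oplus Y=Y$. I then choose a decreasing sequence of rationals $t_n\downarrow r(X)$ with $-t_n E\leq X\leq t_n E$, i.e.\ $t_n E-X\in \cF_+$ and $t_n E+X\in \cF_+$. Continuity of real-scalar multiplication (Lemma \ref{lem:ext}.2]) gives $t_n E\to r(X)E$, and passing to the limit inside the closed cone $\cF_+$ yields $-r(X)E\leq X\leq r(X)E$.

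For item 3], set $X_+:=0\oplus X$ and $X_-:=0\oplus(-X)$, so that $X=X_+-X_-$ by Lemma \ref{lem:oplus}.3] and both $X_{\pm}\geq 0$. The inequality $\max(r(X_+),r(X_-))\leq r(X)$ follows from Lemma \ref{lem:cont}.2] applied to $0\oplus X$ and $0\oplus(-X)$, combined with $r(0)=0$ and $r(-X)=r(X)$ (Lemma \ref{lem:dist}.3]). For the reverse inequality, with $M:=\max(r(X_+),r(X_-))$, item 2] gives $X_+\leq ME$ and $X_-\leq ME$; since $X_-\geq 0$ implies $-X_-\leq 0$ (Lemma \ref{lem:oplus}.2]), two applications of the monotonicity of $+$ (Lemma \ref{lem:oplus}.1]) yield $X=X_++(-X_-)\leq ME+0=ME$, and symmetry gives $-ME\leq X$. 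Approximating $M$ from above by rationals then delivers $r(X)\leq M$.

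The main technical hurdle is item 2]: although $-r(X)E\leq X\leq r(X)E$ looks like a mere restatement of the defining infimum, the scalar $r(X)$ is real while the infimum ranges over $\Q^+$, so one genuinely needs to pass to a limit. This requires both the continuity-of-scalars statement in Lemma \ref{lem:ext}.2] and the closedness of $\cF_+$ for the $F$-norm; once these two observations are in place, items 1] and 3] are essentially algebraic manipulations in the partially ordered $\R$-vector space $\cF$.
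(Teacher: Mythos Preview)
Your proof is correct and follows essentially the same strategy as the paper's. The only tactical differences are that for item 1] the paper derives $(1-t)E=0$ by the direct distributivity computation $tE\oplus E = tE + (0\oplus(1-t)E) = E$ rather than your order-theoretic comparison of $0\leq (t-1)E$ with $0\leq (1-t)E$, and for the reverse inequality in item 3] the paper bypasses the decomposition $X=X_+-X_-$ by simply observing $X\leq 0\oplus X\leq r(0\oplus X)\,E$ (and symmetrically for $-X$).
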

\begin{proof} 1] Recall that $0 \leq E$ so that, by Lemma \ref{lem:ext}.3],  $0 \leq t' E$ for any $t' \in \R^+$.
Since $1. E= E$, one has $r(E) \leq 1$. Assume, by contradiction, that $r(E) <1$. Then 
there exists $t \in ]0, 1[$ such that $ t\cdot E \oplus E= t\cdot E$. But, by distributivity of $+$:
$$
t\cdot E \oplus E= t\cdot E \oplus (t\cdot E + (1-t)\cdot  E) = t\cdot E +( 0 \oplus (1-t)\cdot  E)= (t+ (1-t)) E=E\,.
$$ Thus $t E= E$ why implies that $(1-t) E =0$. Since $\cF$ is not zero, Assumptions 1 and 2 show that 
$E$ cannot be zero. Therefore one gets a contradiction. This proves that $r(E)=1$. 

2] is a direct consequence of the continuity of the scalar map $t \mapsto t E$, of the Definition of $r(X)$ and of 
Definition \ref{def:norm}.2] which states the continuity of $\oplus$.

3] By lemma \ref{lem:oplus}.1],  the inequality $ X \leq r(X) E$ implies (given that $0 \leq E$):
$$
0 \oplus X \leq 0 \oplus r(X) E = r(X) E\,.
$$ Therefore, $r(0 \oplus X) \leq r(X)$ and similarly, by Lemma \ref{lem:dist}.3],  $r(0 \oplus -X) \leq r(-X) = r(X)$.
Now let us prove the reverse inequality. By Lemma \ref{lem:oplus}.3]:
$$
X \leq 0 \oplus X \leq r(0 \oplus X) E\,.
$$ Similarly, $- X \leq r(0 \oplus -X) E $. Therefore, $r(X) \leq \max ( r(0 \oplus X) , r(0 \oplus -X) )$. The result is proved.
\end{proof}

\s
Let us explain the content of the previous Lemma on a concrete example.
\begin{example} We use the notations of Example \ref{ex:K}. Let $X \in C^0(K , \R)$. Then 
$$r( 0 \oplus X) = \max_{v \in K}\, X(v)\,.$$ Moreover, for any real $\epsilon \geq 0$, 
$$
\epsilon {\bf 1} \leq r( 0 \oplus X)\, {\bf 1} - (0 \oplus X) \, \; \Rightarrow \,\; \epsilon= 0\,.
$$
\end{example}
\m



\subsection{ Spectrum of a commutative  perfect semi-field $\cF$ of characteristic $1$.} $\;$

\m
The following definition of character is somewhat analogous to  the concept of character of a commutative complex Banach algebra.

\begin{definition} \label{def:char} A character of $\cF$ is a map $\phi: \cF \rightarrow \R$, not identically zero, satisfying the following properties valid for any 
$(X,Y) \in \cF \times \cF$:
\item 1] $ \phi (X \oplus Y) = {\max}\, ( \phi(X) , \phi(Y) )$.

\item 2] $\phi (X +Y)= \phi(X) + \phi(Y)$.


The set of characters satisfying the normalization condition $\phi(E)=1$ is called the spectrum $S_E( \cF)$ of $\cF$.
\end{definition}

\s
\begin{remark} Using Assumption 1 and the fact that $E \geq 0$, one observes that any character $\phi$ satisfies $\phi (E) >0$. Then, morally a character is a geometric point of "Spec $\cF$". The Frobenius action 
on the set of characters $\phi $ is given by $(\lambda \cdot \phi)(X)= \lambda \,\phi (X)$, $\lambda \in \R^{+*}$. 
An element of $S_E( \cF)$ defines morally a closed point:  an orbit of geometric points under the action 
of $\R^{+*}$.
\end{remark}

\smallskip 
Let us give an explicit  example of a character.
\begin{example} With the notations of Example \ref{ex:K}. Every point $x \in K$ defines an element $\phi_x$
of $S_E( C^0(K , \R))$ by $\phi_x:  f \mapsto f(x)$.
\end{example}
\m
Here are  several elementary properties of the normalized characters.

\begin{lemma} \label{lem:char} Let $\cF$ be a semi-field as in Definition \ref{def:1}.
 Consider $\phi \in S_E( \cF)$ and 
 
 \noindent $(t, X,Y ) \in \R \times \cF \times \cF$ 
such that $X \leq Y$. Then the following are true:
\item 1]   $\phi (X) \leq \phi (Y)$ and $| \phi(X) | \leq r(X)$. 
 \item 2] Assume that $\cF$ is a complete semi-field  for a $F-$norm. Then,  $\phi (t  X) = t \, \phi(X)$.
\end{lemma}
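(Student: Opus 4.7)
The plan is to prove the two parts in order, using only the semi-field axioms and the monotonicity/normalization built into $\phi$.

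For part 1], I would start from the definition $X \leq Y \Leftrightarrow X \oplus Y = Y$. Applying $\phi$ to both sides and using Definition \ref{def:char}.1] gives $\phi(Y) = \max(\phi(X),\phi(Y))$, which is precisely $\phi(X) \leq \phi(Y)$. For the bound $|\phi(X)| \leq r(X)$, note first that $\phi(0) = 0$ (from $\phi(0) + \phi(0) = \phi(0)$ in the group $(\R,+)$), and for any $t \in \Q^+$ one has $\phi(tE) = t\phi(E) = t$ by additivity, hence $\phi(-tE) = -t$. For any rational $t > r(X)$ we have $-tE \leq X \leq tE$ by definition of $r(X)$, so the monotonicity just established gives $-t \leq \phi(X) \leq t$. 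Letting $t \downarrow r(X)$ yields $|\phi(X)| \leq r(X)$.

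For part 2], I would first deduce continuity of $\phi$. From additivity, $\phi(X) - \phi(Y) = \phi(X-Y)$, so part 1] gives $|\phi(X) - \phi(Y)| \leq r(X-Y) \leq \|X-Y\|$, i.e.\ $\phi$ is $1$-Lipschitz for the $F$-norm. Next I would verify $\phi(tX) = t\phi(X)$ for rational $t$: for $n \in \N^*$ additivity gives $\phi(nX) = n\phi(X)$, and since $\theta_n$ is bijective (Proposition \ref{prop:Go}.1]) the relation $n \cdot \theta_{1/n}(X) = X$ yields $\phi(\theta_{1/n}(X)) = \phi(X)/n$; combining these handles all positive rationals, and $\phi(-Y) = -\phi(Y)$ (again from $\phi(0) = 0$) extends this to all of $\Q$.

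Finally, for arbitrary $t \in \R$, I would pick a sequence $(r_n) \subset \Q$ with $r_n \to t$. By Lemma \ref{lem:ext}.2], $\|r_n X - tX\| = |r_n - t|\,\|X\| \to 0$, so $r_n X \to tX$ in the $F$-norm. Applying the continuity of $\phi$ established above, $\phi(tX) = \lim_n \phi(r_n X) = \lim_n r_n \phi(X) = t\,\phi(X)$, completing the proof.

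I expect no real obstacle here: the whole argument is a clean bootstrap from (i) monotonicity of $\phi$ to (ii) a bound by $r(X)$ to (iii) Lipschitz continuity in the $F$-norm, after which $\R$-linearity on the additive group follows from $\Q$-linearity by density. The only mild subtlety is remembering that $\phi$ is not a priori a linear map on a vector space; one must manufacture its continuity from the ultrametric-type estimate on $\oplus$ rather than assume it.
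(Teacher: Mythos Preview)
Your proof is correct. Part 1] is essentially identical to the paper's argument.

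For part 2] you take a genuinely different route from the paper. The paper decomposes $X = X_+ - X_-$ with $X_\pm \geq 0$ (Lemma \ref{lem:oplus}.3]), observes that $t \mapsto \phi(t X_+)$ is an additive map $\R \to \R$ which is nondecreasing (by Lemma \ref{lem:ext}.4] and the monotonicity in part 1]), and invokes the classical fact that such a map is linear; then it recombines the two pieces. Your argument instead exploits part 1] to obtain the Lipschitz estimate $|\phi(X)-\phi(Y)| \leq r(X-Y) \leq \|X-Y\|$, establishes $\Q$-linearity algebraically, and passes to $\R$ by density using continuity. Both approaches rely on Lemma \ref{lem:ext} (the paper for monotonicity of $t \mapsto tX_+$, you for $\|(r_n-t)X\| = |r_n-t|\,\|X\|$), so neither is more elementary in its hypotheses. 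Your route is closer in spirit to the Banach-algebra analogy the paper advertises (automatic continuity of characters), while the paper's route stays within the order structure and the Cauchy functional equation; practically they are of equal length and difficulty.
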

\begin{proof} 1] Since $X \oplus Y =Y$, 
the inequality  $\phi (X) \leq \phi (Y)$ is a consequence of Definition \ref{def:char}.1].  Next, by definition of $r(X)$, there exists a 
sequence of rational numbers $(r_n)$ such that $\lim_{n \rightarrow + \infty} r_n= r(X)$ and:
$$
\forall n \in \N,\; r(X) \leq r_n,\; -r_n E\leq X \leq r_n E \,.
$$ From Definition \ref{def:char}.2],  one obtains $\phi (r_n E)= r_n \phi(E)= r_n$ and similarly, 
 $\phi (-r_n E)=- r_n$. Then, we deduce from all of this that $\forall n \in \N,\; -r_n \leq \phi (X) \leq r_n$. 
 By letting $n \rightarrow + \infty$, one obtains the desired result.

2] 
Now set $X_+ = 0 \oplus X, X_- = 0 \oplus - X$ so that 
by Lemma \ref{lem:oplus}, $X = X_+ - X_-$. Then consider the map $\psi : \R \rightarrow \cF$ 
defined by $\psi (t) = \phi( t X_+)$. By definition of $\phi$, $\psi ( t+t')= \psi(t) + \psi(t')$ for any reals $t,t'$. 
By Lemma \ref{lem:ext}.4], $\psi$ is nondecreasing on $\R$. Then it is well known that for any real $t$,
$\psi (t)= t \psi(1)$, in other words: $t \phi(X_+) = \phi( t X_+)$ and similarly $t \phi(X_-) = \phi( t X_-)$.
Since $\phi (-Z)=-\phi(Z)$ for any $Z \in \cF,$
one then obtains easily that $ \forall t \in \R,\; \phi (t  X) = t \, \phi(X)$.

\end{proof}

\m
Now, motivated by the theory of commutative complex Banach algebras (\cite{Ru}), we 
introduce the following topology $\cT $ on $S_E( \cF)$. 
\begin{definition} \label{def:topo} Let $\cF$ be as in Definition \ref{def:1}.
We denote by $\cT$ the weakest topology on $S_E( \cF)$ rendering continuous all the maps 
$\phi \mapsto \phi(X)$ where $X$ runs over $\cF$. 
Let $\phi_0 \in S_E( \cF)$, then a system of fundamental open neighborhoods of $\phi_0$ for $\cT$ is given by 
the open subsets:
$$
V_{\phi_0} (\epsilon, X_1,  \ldots , X_n) = \{ \phi \in S_E( \cF)\,/\,  | \phi(X_j) -\phi_0(X_j) | < \epsilon,\, 1\leq j \leq n \}\,,
$$ where 
$\epsilon >0$ and the $X_1,  \ldots , X_n $ run over $\cF$.
\end{definition}

\s The next result is analogous to the compacity of the set of characters of a commutative Banach algebra 
(\cite{Ru}). 
\begin{theorem} \label{thm:comp} $S_E(\cF)$ is a compact topological space for the previous topology $\cT$.
\end{theorem}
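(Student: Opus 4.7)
The plan is to mimic the Gelfand-style argument that proves compactness of the character space of a commutative Banach algebra, namely to embed $S_E(\cF)$ into a Tychonoff product of compact intervals and show that it lands inside a closed subset.

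First, by Lemma \ref{lem:char}.1 every $\phi \in S_E(\cF)$ satisfies $|\phi(X)| \leq r(X)$ for all $X \in \cF$. Therefore the evaluation map
$$
\iota : S_E(\cF) \longrightarrow K := \prod_{X \in \cF} [-r(X), r(X)], \qquad \phi \longmapsto (\phi(X))_{X \in \cF},
$$
is well-defined. The space $K$ is compact by Tychonoff's theorem. The topology $\cT$ of Definition \ref{def:topo} is by construction the coarsest making each coordinate projection $\phi \mapsto \phi(X)$ continuous, so $\iota$ is a topological embedding onto its image, equipped with the subspace topology from the product topology on $K$. It remains to show that $\iota(S_E(\cF))$ is closed in $K$, since a closed subset of a compact Hausdorff space is compact.

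The conditions defining $S_E(\cF)$ inside $K$ are, for every pair $(X,Y) \in \cF \times \cF$:
$$
\phi(X+Y) - \phi(X) - \phi(Y) = 0, \qquad \phi(X \oplus Y) - \max(\phi(X), \phi(Y)) = 0,
$$
together with the single normalization $\phi(E) = 1$. Each of these is a closed condition in the product topology, because it is the zero set of a continuous function on $K$ depending only on the finitely many coordinates indexed by $X$, $Y$, $X+Y$, $X\oplus Y$, and $E$ (continuity of the relevant real maps $+$ and $\max$ is used here). Hence $\iota(S_E(\cF))$ is the intersection of a family of closed subsets of $K$, and is therefore closed. Note that the hypothesis that $\phi$ is not identically zero is automatic from $\phi(E) = 1$, so no extra (possibly non-closed) condition is needed.

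I do not expect any serious obstacle: the argument is entirely parallel to the classical proof that the maximal ideal space of a commutative Banach algebra is compact. The only point worth double-checking is the bound $|\phi(X)| \leq r(X)$, which is precisely Lemma \ref{lem:char}.1 and holds without any completeness hypothesis on $\cF$; thus the statement of the theorem is valid as soon as Assumptions \ref{ass:1} and \ref{ass:2} are in force, which is the standing assumption of the section.
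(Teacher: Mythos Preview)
Your proof is correct and follows essentially the same approach as the paper: embed $S_E(\cF)$ into the Tychonoff product $\prod_{X\in\cF}[-r(X),r(X)]$ using Lemma~\ref{lem:char}.1], identify $\cT$ with the induced product topology, and verify that the defining conditions (additivity, $\oplus$-to-$\max$, and $\phi(E)=1$) cut out a closed subset. The paper carries out the closedness step via explicit $\epsilon$-inequalities rather than phrasing it as ``zero sets of continuous functions of finitely many coordinates,'' but the content is identical.
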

\begin{proof} Consider the following product (or functions space):
$$
\F = \prod_{X \in \cF}\, [-r(X) , r(X)]\,.
$$ We endow it with the product topology associated to the  compact intervals $[-r(X) , r(X)]$.
By Tychonoff's theorem, $\F$ is compact. The open subsets of this product topology are given 
by the subsets:
$$
\prod_{j=1}^n I_j \times \prod_{X \in \cF \setminus \{X_1, \ldots , X_n\}} \, [-r(X) , r(X)]\, ,
$$ where the $X_j$ ($1 \leq j\leq n$) run over $\cF$ and each  $I_j$ is an open subset of $[-r(X_j) , r(X_j)]$.

 Lemma \ref{lem:char}.1] shows that $S_E(\cF)$ is naturally included in $\F$. 
Then, we  need two intermediate lemmas.
\begin{lemma}The product topology of $\F$ induces the topology $\cT$ of $S_E(\cF)$.
\end{lemma} 
\begin{proof} With the notations of Definition \ref{def:topo}, $ V_{\phi_0} (\epsilon, X_1,  \ldots , X_n)$ 
is nothing but the intersection of $S_E(\cF)$ (viewed as a subset of $\F$) with the subset
$$
\prod_{j=1}^n I_j \times \prod_{X \in \cF \setminus \{X_1, \ldots , X_n\}} \, [-r(X) , r(X)]\, ,
$$ where each $I_j$ is equal to $[-r(X_j), r(X_j)] \cap ]-\epsilon + \phi_0(X_j),  \epsilon + \phi_0(X_j)[$, ($1\leq j \leq n$).
Therefore, the $ V_{\phi_0} (\epsilon, X_1,  \ldots , X_n)$ constitute a basis of open neighborhoods of $\phi_0$ for the 
topology of $S_E(\cF)$ induced by the (product) one of $\F$. The Lemma is proved.
\end{proof}

Now the Theorem follows from the next Lemma.
\begin{lemma} $S_E(\cF)$ is closed in the compact space $\F$.
\end{lemma} 
\begin{proof} Let $\psi \in \overline{S_E(\cF)}$ and $X,Y \in \cF$. For any $\phi \in S_E(\cF)$  one can write the following inequalities:
\begin{equation} \label{eq:a}
| \psi(E) -1 | \leq | \psi(E)-\phi(E)|
\end{equation}
\begin{equation} \label{eq:b}
| \psi(X+Y) - \psi(X) - \psi(Y)| \leq | \psi( X+Y) - \phi(X+Y) | + | \psi(X) - \phi(X) | + | \psi(Y) - \phi(Y) |
\end{equation}
\begin{equation} \label{eq:cc}
| \psi( X \oplus Y) - \max ( \psi(X) , \psi(Y) ) | \leq 
| \psi( X \oplus Y) - \phi( X \oplus Y) |  + | \psi(X) - \phi(X) | + | \psi(Y) - \phi(Y) |
\end{equation}
Notice that  \eqref{eq:cc} is a consequence of the following inequality.
$$
|  \max ( \psi(X) , \psi(Y) ) \,-\, \max ( \phi(X) , \phi(Y)  | \leq | \psi(X) - \phi(X) | + | \psi(Y) - \phi(Y) |\,.
$$ 
Since  $\psi \in \overline{S_E(\cF)}$,  for any $\epsilon >0$, one can find $\phi \in S_E(\cF)$ such that 
the right hand sides of \eqref{eq:a}, \eqref{eq:b} and \eqref{eq:cc} are all lower than $\epsilon$.
This shows  that $\psi \in S_E(\cF)$. Thus the Lemma is proved.
\end{proof}
\end{proof}

Notice that we did not assume $\cF$ to be complete in the previous Theorem. This observation will be used 
in Sections 6 and 7 for algebraic goals.

\begin{remark} We shall prove later that $S_E(\cF)$ is not empty; this is not obvious.  Besides, it will be clear in Section 4 that  if $\cF$ is not separable then
the topological space $(S_E(\cF) , \cT) $ is not metrizable.
\end{remark}

\m
\section{Congruences on $(\cF, \oplus, +)$.} $\;$

\s

\subsection{Algebraic properties of congruences and operations on quotient spaces.} $\;$

\s

We shall use the notion of congruence (\cite{Iz}, \cite{Lescot}, \cite{Pareigis}) as the analogue of the notion of 
ideal in the theory of complex Banach algebras. Of course, $\cF$ will denote a semi-field as in Definition \ref{def:1}.

\begin{definition} \label{def:congF} A congruence $\sim$ on $\cF$ is an equivalence relation on $\cF$
satisfying the following conditions valid for all $X,Y, Z \in \cF$: 

\item  If $X \sim Y$ then $-X \sim -Y$,  $X+Z \sim Y+Z$, and $X\oplus Z \sim Y\oplus Z$.

We shall denote by $\pi: \cF \rightarrow \cF/\sim$ the natural projection onto the set of equivalence classes.
The trivial  congruence is the one such that
for all $X,Y \in \cF,$ $X {\sim} Y$.
\end{definition}

\s The next Lemma allows to understand better this concept, its part 3] reveals a key absorption phenomenon.
\begin{lemma} \label{lem:cong} \item 1] Let $\sim$ be a congruence on $\cF,$ then the class of $0$, denoted $\pi(0)$, is a sub semi-field of $(\cF, \oplus, +)$ such that the following two properties hold:
$$
\forall X, Y \in \cF, \; \; X \sim Y\, \Leftrightarrow  \, X-Y \in \pi(0) \,,
$$
 and
 \begin{equation} \label{eq:s}
\forall (X, Y, Z) \in \cF \times \cF \times \pi(0) , \; \exists Z_1 \in \pi(0),\; (X+Z) \oplus Y= (X\oplus Y) + Z_1 \,.
\end{equation}  
\item 2] Conversely, let $\cH$ be a sub semi-field of $(\cF, \oplus, +)$ satisfying \eqref{eq:s} with $\pi(0)$ replaced by 
$\cH$.  Then one defines a congruence on $\cF$ by setting: $X \sim Y\, \Leftrightarrow  \, X-Y \in \cH$. 
\item 3] Let $\sim$ be a congruence of $\cF$. Consider $A,C \in \pi(0)$ and $B \in \cF $ such that $A\leq B\leq C$.
Then  $B $  belongs  to $\pi(0)$ too.
\end{lemma}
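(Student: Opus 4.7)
The plan is to treat each of the three parts separately, using systematically the compatibility axioms of Definition \ref{def:congF}. Parts 1 and 2 are essentially structural bookkeeping; part 3 is the only one that genuinely uses the interplay between $\oplus$, the negation, and the partial order, and it is the main obstacle.

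For part 1, the first step is to verify that $\pi(0)$ is closed under the three operations. Given $Z,Z' \in \pi(0)$, i.e.\ $Z \sim 0$ and $Z' \sim 0$, applying the compatibility of $\sim$ with $+$, with $\oplus$, and with negation gives $Z+Z' \sim 0$, $Z \oplus Z' \sim 0$, and $-Z \sim 0$; this makes $\pi(0)$ a sub semi-field. Next, the equivalence $X \sim Y \Leftrightarrow X-Y \in \pi(0)$ follows by adding $-Y$ to both sides of $X \sim Y$ (using compatibility with $+Z$ at $Z=-Y$), and conversely by adding $Y$ to $X-Y \sim 0$. Finally, for \eqref{eq:s}: since $Z \sim 0$, compatibility gives $X+Z \sim X$, hence $(X+Z) \oplus Y \sim X \oplus Y$. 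By the characterization just proved, $Z_1 := (X+Z) \oplus Y - (X \oplus Y)$ lies in $\pi(0)$, which is the required identity.

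For part 2, define $X \sim Y \Leftrightarrow X - Y \in \cH$ and verify the equivalence relation axioms: reflexivity from $0 \in \cH$, symmetry from closure of $\cH$ under negation, transitivity from closure of $\cH$ under $+$ (writing $X-Z = (X-Y)+(Y-Z)$). Compatibility with $+Z$ and with negation is immediate since $(X+Z)-(Y+Z) = X-Y$ and $-X-(-Y) = -(X-Y)$. The one delicate point is compatibility with $\oplus$: given $X - Y \in \cH$ and any $W \in \cF$, apply the hypothesis (the analogue of \eqref{eq:s} with $\pi(0)$ replaced by $\cH$) to the triple $(Y, W, X-Y)$, obtaining $Z_1 \in \cH$ with $\bigl(Y + (X-Y)\bigr) \oplus W = (Y \oplus W) + Z_1$, i.e.\ $X \oplus W - Y \oplus W \in \cH$, as wanted.

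For part 3, we have $A,C \in \pi(0)$ with $A \leq B \leq C$, so by definition of the partial order $A \oplus B = B$ and $B \oplus C = C$. From $A \sim 0$, compatibility of $\sim$ with $\oplus$ gives $B = A \oplus B \sim 0 \oplus B$. From $C \sim 0$, the same compatibility gives $C = B \oplus C \sim B \oplus 0 = 0 \oplus B$. Combining with $C \sim 0$ by transitivity yields $0 \oplus B \sim 0$, and then transitivity once more gives $B \sim 0$, i.e.\ $B \in \pi(0)$. The key point here is that $A$ and $C$ bracket $B$ from below and above, and each being equivalent to $0$ allows one to collapse the non-cancellative positive part $0 \oplus B$ back onto $0$; without this two-sided bracketing the argument would fail, which is why the statement is an honest absorption property and not a formal consequence of the definitions.
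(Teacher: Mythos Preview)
Your proof is correct and follows essentially the same route as the paper's, including the identical transitivity chain for part 3]; in fact you supply more detail for parts 1] and 2] than the paper does, since it leaves those largely to the reader.
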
 
\begin{proof} 1] Let $X, Y \in \pi(0),$ 
 since $
0 \sim X$, one has by definition of a congruence: $ 0 \oplus Y \sim X \oplus Y$. Similarly, 
$0 \sim Y$ implies that $0= 0 \oplus 0 \sim 0 \oplus Y$. Therefore by transitivity, 
$X \oplus Y \in \pi(0)$. In the same way one obtains that $(\pi (0), +)$ is an additive sub group of $\cF$.
Next, by the very definition of a congruence:  $X \sim Y$ iff $0 \sim Y-X$. 
 We leave the rest of the proof to the reader.

 2] is easy and also left to the reader. Let us prove 3]. Consider $A,B,C $ as in the statement of 3].
By hypothesis one has $A \oplus B=B $ and $ C\oplus B = C$.
Then, by definition of a congruence one has:
\begin{equation} \label{eq:A}
 0 \sim A \, \Rightarrow \, 0 \oplus B \sim A \oplus B = B ,\, 
\end{equation}

\begin{equation} \label{eq:B}
C \sim 0 \, \Rightarrow \, C \oplus B \sim 0 \oplus B\,.
\end{equation}
 Now,  since $C \oplus B = C \sim 0$, \eqref{eq:B} implies that $ 0 \sim 0 \oplus B$. 
 But, by \eqref{eq:A},  $0 \oplus B \sim B$. By the transitivity of $\sim$, one obtains 
 $B \sim 0$,  which proves the result.
 
\end{proof}

\s
The  following  lemma describes  the simplest example, but it is already instructive.
\begin{lemma} Let $\sim$ be a congruence on $(\R, \max( ,), +)$. Then either $X \sim Y$ for all 
$X,Y \in \R$ or,  one has: $X \sim Y \, \Leftrightarrow X=Y$ for any $X,Y \in \R$.
\end{lemma}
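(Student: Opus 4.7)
The plan is to apply Lemma \ref{lem:cong} to reduce the classification of congruences on $(\R,\max,+)$ to the structure of the class $\pi(0)$ as a subset of $\R$. By Lemma \ref{lem:cong}.1], $\pi(0)$ is an additive subgroup of $(\R,+)$ and $X\sim Y$ is equivalent to $X-Y\in\pi(0)$. By Lemma \ref{lem:cong}.3], $\pi(0)$ is in addition order-convex: whenever $A,C\in\pi(0)$ with $A\leq C$, every $B$ with $A\leq B\leq C$ already lies in $\pi(0)$.

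The easy case is $\pi(0)=\{0\}$, which immediately gives $X\sim Y\Leftrightarrow X=Y$. For the remaining case I would assume $\pi(0)\neq\{0\}$ and pick some nonzero $a\in\pi(0)$. Since $\pi(0)$ is a subgroup, $-a\in\pi(0)$ as well, so up to replacing $a$ by $|a|$ I may assume $a>0$. Applying the order-convexity of Lemma \ref{lem:cong}.3] with the pair $(-a,a)$ yields $[-a,a]\subset\pi(0)$. Combining this with the subgroup property (which forces $na\in\pi(0)$ for every $n\in\Z$) and a second application of order-convexity on each interval $[na,(n+1)a]$, I conclude $\pi(0)=\R$, and therefore $X\sim Y$ for all $X,Y\in\R$, i.e.\ the trivial congruence.

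The step that one might initially fear is ruling out exotic additive subgroups of $\R$ such as $\Z+\sqrt 2\,\Z$, which are dense and proper. This is precisely where Lemma \ref{lem:cong}.3] does the decisive work: the interplay between $\oplus$ and $+$ forces any nontrivial $\pi(0)$ to contain a full interval, which then propagates to all of $\R$ by translation. So the real content of the lemma is the absorption/interval-filling property of part 3] of Lemma \ref{lem:cong}; once that is in hand, the argument is essentially a one-line observation about subgroups of $\R$ containing an open interval.
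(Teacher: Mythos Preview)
Your proof is correct and follows essentially the same route as the paper: reduce via Lemma~\ref{lem:cong}.1] to the additive subgroup $\pi(0)\subset\R$, then use the order-convexity property of Lemma~\ref{lem:cong}.3] to show that a nontrivial $\pi(0)$ must contain an interval around $0$, hence (by the subgroup property) all of $\R$. The only cosmetic difference is that the paper applies the absorption property directly to the symmetric intervals $[-nX,nX]$ rather than to the adjacent intervals $[na,(n+1)a]$.
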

\begin{proof} Assume that there exist two distinct reals $X,Y$ such $X \sim Y$.
By Lemma \ref{lem:cong}.1], this means that 
$\pi(0)$ is a non trivial subgroup of $\R$, so it contains a positive real $X >0$. By definition of a congruence, 
for any $n \in \N$, $-n X \sim 0$ and $n X \sim 0$.   But by Lemma \ref{lem:cong}.3] we obtain 
that $[-n X , n X] \subset \pi(0)$. Therefore $\sim$ is the trivial congruence, which proves the result. 
\end{proof}
\s
Here is a more elaborate example.
\begin{example} \label{ex:K_1} With the notations of Example \ref{ex:K}, consider a compact subset $K_1$ of $K$.
Then one defines a (closed) congruence   $\sim_{K_1}$ on $C^0(K , \R)$ by setting: $f \sim_{K_1} g$ if 
$f_{| _{K_1}}= g_{|_{K_1}}$.
\end{example}

\s
\begin{lemma} \label{prop:rel} Assume that $\cF$ is complete for a $F-$norm (see Definition \ref{def:dist})  and 
 let $\sim$ be a congruence on $\cF$. View $\sim$ as a subset of $\cF \times \cF$ 
and denote by $\, \overline{\sim}$ its closure in $\cF \times \cF$. Then $\overline{\sim}$ defines also
a congruence on $\cF$, and 
$$
\forall X, Y \in \cF , \; X \, \overline{\sim} \,Y\, \Leftrightarrow X-Y \in \overline{\pi(0)} ,
$$ where $\overline{\pi(0)}$ denotes the closure of  $\pi(0)$. Therefore, $\sim$ is closed if and only if $\pi(0)$ is a closed subset of
$\cF$.
\end{lemma}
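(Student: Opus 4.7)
The plan is to anchor everything on the characterization $X\,\overline{\sim}\,Y \Leftrightarrow X-Y \in \overline{\pi(0)}$ and derive the remaining assertions from it. First I would introduce the continuous ``difference'' map $\delta : \cF \times \cF \to \cF$, $\delta(X,Y) = X - Y$; by Lemma \ref{lem:cong}.1], $\sim$ coincides with $\delta^{-1}(\pi(0))$. Continuity of $\delta$ and closedness of $\delta^{-1}(\overline{\pi(0)})$ then give the inclusion $\overline{\sim} \subseteq \delta^{-1}(\overline{\pi(0)})$ for free.

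For the reverse inclusion I would invoke the congruence structure of $\sim$: given $(X,Y)$ with $X-Y \in \overline{\pi(0)}$, pick a sequence $Z_n \in \pi(0)$ with $Z_n \to X-Y$. Since $Z_n \sim 0$, compatibility of $\sim$ with $+$ gives $Y + Z_n \sim Y$ for each $n$, so the pairs $(Y+Z_n, Y)$ all lie in $\sim$; continuity of $+$ forces $Y + Z_n \to X$, so these pairs converge to $(X,Y)$ in $\cF \times \cF$, whence $(X,Y) \in \overline{\sim}$. This completes the characterization.

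Once the characterization is available, the fact that $\overline{\sim}$ is itself a congruence reduces to a few verifications. Reflexivity, symmetry and transitivity follow from $0 \in \overline{\pi(0)}$ together with the additive group structure of $\overline{\pi(0)}$, and this group structure is itself immediate from the continuity of $+$ and of negation (both continuous under any $F$-norm by Definition \ref{def:norm}). Compatibility of $\overline{\sim}$ with $+$ and with negation is equally immediate from the characterization. The compatibility with $\oplus$ is the step I expect to be the main obstacle, since it cannot be read off the additive group structure alone. Given $X \,\overline{\sim}\, Y$ and $Z \in \cF$, I would reuse the approximation $Y + Z_n \sim Y$ to deduce $(Y+Z_n)\oplus Z \sim Y \oplus Z$ from the congruence axiom on $\sim$, and then invoke the ultra-metric type continuity of $\oplus$ from Definition \ref{def:norm}.2] to obtain $(Y+Z_n) \oplus Z \to X \oplus Z$; this yields $X \oplus Z \,\overline{\sim}\, Y \oplus Z$, as needed.

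The final equivalence ``$\sim$ closed if and only if $\pi(0)$ closed'' then follows painlessly. If $\sim$ is closed, then $\pi(0) = \{X \in \cF : X \sim 0\}$ is the preimage of the closed set $\sim$ under the continuous map $X \mapsto (X, 0)$, hence closed. Conversely, if $\pi(0)$ is already closed, the characterization rewrites $\overline{\sim} = \delta^{-1}(\overline{\pi(0)}) = \delta^{-1}(\pi(0)) = \sim$, so $\sim$ is closed.
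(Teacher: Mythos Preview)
Your proposal is correct and follows essentially the same approach as the paper's proof: both establish the characterization $X\,\overline{\sim}\,Y \Leftrightarrow X-Y \in \overline{\pi(0)}$ via the same sequence argument (shifting an approximating sequence in $\pi(0)$ by $Y$), deduce the equivalence-relation properties from the subgroup structure of $\overline{\pi(0)}$, and handle compatibility with $\oplus$ by approximation plus the continuity of $\oplus$ from Definition~\ref{def:norm}.2]. Your use of the continuous difference map $\delta$ to get the forward inclusion is a slight streamlining, but the underlying ideas are identical.
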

\begin{proof}
By definition,  $X \,\overline{\sim} \,Y$ if and only one can find two sequences $(X_n)_{n \in \N},\, (Y_n)_{n \in \N}$ of 
points of $\cF$ such that:
$$
\lim_{n\rightarrow +\infty} X_n = X,\; \lim_{n\rightarrow +\infty} Y_n =Y, \; {\rm and} \; \forall n \in \N,\; X_n - Y_n \in \pi(0)\,.
$$ Therefore , $X \overline{\sim} Y$ implies that $X-Y \in \overline{\pi(0)}$. Conversely, set $H= X-Y \in \overline{\pi(0)}$ and 
consider a sequence $(H_n)$ in $\pi(0)$ such that $\lim H_n=H$. Set $X_n= Y + H_n$ and $Y_n=Y$.
By construction,  $X_n \sim Y_n$, $\lim X_n = Y+H\,=\,X$ and $\lim Y_n=Y$. 
We then conclude that 
$X \,\overline{\sim} \, Y$ if and only $X- Y$ belongs to  $ \overline{\pi(0)}$. Since $(\overline{\pi(0)} , +)$ is a subgroup of $(\cF, +)$,
  it becomes then clear that $\overline{\sim}$
is  an equivalence relation.

Consider now $X,Y,Z \in \cF$ such that $ X \, \overline{\sim} \,Y$, there exists 
two sequences $(X_n)_{n \in \N},\, (Y_n)_{n \in \N}$ of $\cF$ such that
$$
\lim_n X_n=X,\; \lim_n Y_n =Y,\; {\rm and}\; \forall n \in \N,\; X_n \sim Y_n\,.
$$ By definition of a congruence, one has $-X_n \sim -Y_n$, $X_n \oplus Z \sim Y_n \oplus Z$  and $X_n + Z \sim Y_n + Z$ for 
each $n \geq 0$. By taking the limits and using Definition \ref{def:norm}.2],  one gets 
$$ -X\, \overline{\sim} \,-Y, \; X \oplus Z\, \,\overline{\sim} \, \,Y \oplus Z , \;
X + Z \,\,\overline{\sim} \, \,Y + Z\,. 
$$ The Lemma is proved.
\end{proof}

\s
By considering the structure of semi-field  induced on the set of equivalence classes $\cF/\sim$ by the one of $\cF$, 
one gets a better understanding of the congruence $\sim$.

\begin{proposition} \label{prop:alg} Let $\sim$ be a congruence on $\cF$.
\item 1] The laws $\oplus, +$ of $\cF$ induce 
two laws  (denoted by the same symbols) on $\cF/\sim$ which gives it a natural structure of  perfect semi-field 
 in the sense of Definition \ref{def:1}. Moreover,  the projection $\pi: \cF \rightarrow \cF/\sim$ is a homomorphism of semi-fields.

\item 2] For any $(t,X,Y ) \in \Q \times \cF \times \cF $ one has:
$$
X \sim Y \; \Rightarrow t  X \sim t Y , \, {\rm and}
$$
the class of $0$, $\pi(0)$,  is a $\Q-$subvector space of $\cF$. 
\item 3] Let $X, Y \in \cF$. Then $\pi(X) \leq \pi(Y)$  if and only if 
there exists $ H \in \pi(0)$ such that $X \leq (Y + H)$.

\end{proposition}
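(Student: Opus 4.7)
My plan is to prove the three parts in order, each leveraging the previous one. For Part 1, I would first observe that the compatibility conditions in Definition \ref{def:congF} say precisely that $\oplus$, $+$, and $X\mapsto -X$ are compatible with $\sim$ (two applications of each, combined with transitivity, handle both variables at once). Hence the quotient laws descend, and $\pi$ is tautologically a homomorphism. All the axioms of Definition \ref{def:1} then transfer to $\cF/\sim$: idempotency of $\oplus$, distributivity, commutativity, associativity, and the abelian group structure of $(\cF/\sim, +)$ with neutral element $\pi(0)$ and inverse $\pi(-X)$. Finally, $\theta_n$ on $\cF/\sim$ is surjective because $\theta_n(\pi(X)) = \pi(nX)$ and $\theta_n$ is already surjective on $\cF$.

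For Part 2, the case $t = n \in \N$ is immediate from Part 1, since $\pi$ being a homomorphism gives $\pi(nX) = n\pi(X) = n\pi(Y) = \pi(nY)$ whenever $X \sim Y$. Combined with the built-in rule $X \sim Y \Rightarrow -X \sim -Y$, this covers all $t \in \Z$. For $t = a/b$ with $a \in \Z$ and $b \in \N^*$, I would write $X' := (a/b) X$ and $Y' := (a/b) Y$, so that $aX = bX'$ and $aY = bY'$. The integer case yields $bX' \sim bY'$, i.e.\ $b\pi(X') = b\pi(Y')$ in $\cF/\sim$. Here I invoke Proposition \ref{prop:Go} applied to the perfect semi-field $\cF/\sim$ constructed in Part 1: $\theta_b$ is injective there, hence $\pi(X') = \pi(Y')$, i.e.\ $(a/b)X \sim (a/b)Y$. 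That $\pi(0)$ is a $\Q$-subspace then follows: by Lemma \ref{lem:cong}.1] it is an additive subgroup, and $X \sim 0$ together with what was just shown gives $tX \sim t\cdot 0 = 0$ for every $t \in \Q$.

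For Part 3, I would simply unravel the order on $\cF/\sim$: by Definition \ref{def:order} applied to the quotient, $\pi(X) \leq \pi(Y)$ iff $\pi(X) \oplus \pi(Y) = \pi(Y)$, i.e.\ $X \oplus Y \sim Y$. By Lemma \ref{lem:cong}.1], this is equivalent to $H := (X \oplus Y) - Y$ lying in $\pi(0)$, in which case $X \leq X \oplus Y = Y + H$ as required. Conversely, if $X \leq Y + H$ with $H \in \pi(0)$, then $X \oplus (Y + H) = Y + H$; applying $\pi$ and using $\pi(H) = 0$ yields $\pi(X) \oplus \pi(Y) = \pi(Y)$, i.e.\ $\pi(X) \leq \pi(Y)$.

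The main subtlety lies in Part 2, in the passage from $b\cdot(tX) \sim b\cdot(tY)$ to $tX \sim tY$: this is where one really uses the non-trivial content of Proposition \ref{prop:Go} --- namely the injectivity of $\theta_b$ on every perfect semi-field --- applied to the quotient $\cF/\sim$ itself. This dependence dictates the logical order: Part 1 must be established before the scalar compatibility of Part 2 can be extracted for non-integer rationals.
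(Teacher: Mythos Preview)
Your proof is correct and follows essentially the same strategy as the paper's: Part 1 is routine (the paper leaves it to the reader), and Part 2 uses that $\cF/\sim$ is itself a perfect semi-field so that $\theta_b$ is injective there (the paper phrases this as applying Lemma \ref{lem:Frob} to the quotient, you as applying Proposition \ref{prop:Go}; same content).

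For Part 3 (forward direction) you actually give a cleaner argument than the paper. The paper, after writing $X\oplus Y = Y+H$ with $H\in\pi(0)$, first proves $H\geq 0$ and then uses this to deduce $Y\oplus(Y+H)=Y+H$, whence $Y+H = X\oplus Y\oplus(Y+H) = X\oplus(Y+H)$. You instead observe directly that $X \leq X\oplus Y = Y+H$, which is immediate from idempotency and suffices for the statement as written. The paper's detour does yield the extra information $H\geq 0$, but this is not part of the claim.
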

\begin{proof} 1] is easy and left to the reader. 

2] Since $(\cF/\sim, \oplus, +) $ satisfies the conditions of Definition \ref{def:1},  we can apply to it 
Lemma \ref{lem:Frob}, which gives immediately the desired result.

3] Assume that $\pi(X) \leq \pi(Y)$.  Then $X \oplus Y \sim Y$. So there exists $ H \in \pi(0)$, 
such that 

\noindent $X \oplus Y = Y+H$. 
Since $X \oplus Y\oplus Y=X \oplus Y$, this implies:
$$
Y+H= X \oplus Y \oplus Y=   (Y+H) \oplus (Y+0) = Y + ( H \oplus 0).
$$ Thus $H= H \oplus 0$ so that  $H \geq 0$. This implies:  $Y \oplus (Y+H)=Y+H$. We then obtain:
$$
Y+H = (Y+ H) \oplus (Y+H)= X \oplus Y \oplus (Y +H)= X \oplus (Y+H)\,.
$$ Therefore, $X \leq (Y+H)$ which proves the first implication $\Rightarrow$.  The converse is trivial.
\end{proof}

\s
Now let us examine the quotient semi-field $\cF/\sim$ when $\cF$ is complete and $\sim$ is closed.

\begin{proposition} \label{prop:w} Assume that  $\cF$ is  a complete semi-field for a $F-$norm $\| \, \|$ and let
 $\sim$ be a closed congruence on $\cF$ (i.e it induces  a closed subset of $\cF \times \cF$).
Then: 
\item 1]  The   semi-field $\cF/\sim$ satisfies Assumptions 1 and 2 with $\pi(E)$ in place of $E$. Denote by 
$r_\sim (\pi(X))$ (instead of $r(\pi(X)$) the corresponding real number  associated to $\pi(X) \in \cF/\sim$ . 
\item 2] The class $\pi(0)$ defines a closed real sub-vector space of $\cF$ and
for any $(t, X,Y ) \in\R \times  \cF \times \cF $ one has:
$$
X \sim Y \; \Rightarrow t X \sim t Y\,.
$$ In any words, the projection $\pi: \cF \rightarrow \cF/\sim\, = \cF / \pi(0)$ defines a  $\R-$linear map 
between two $\R-$vector spaces. 
\item 3] For any $X \in \cF$, one has:
$$
   r_\sim (\pi(X) )\,  \leq \, \| \pi(X) \|_1= \inf_{Z \in \pi(0)}\, \|X +Z\|\, ,
$$ and the real vector space $\cF/\sim$ is complete for the above norm $\| \, \|_1$.
\item 4] For any $X_1, X'_1, Y_1, Y'_1 \in \cF/\sim$, one has:
$$
\| X_1 \oplus Y_1 - X'_1 \oplus Y'_1 \|_1 \leq \max ( \| X_1- X'_1\|_1 , \| Y_1- Y'_1\|_1) \,.
$$

In other words, $\| \, \|_1$ defines a $F-$norm on $\cF/\sim$ . 

\end{proposition}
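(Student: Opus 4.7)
The plan is to dispatch the four parts in logical order, with the real difficulty concentrated in Assumption \ref{ass:2} for the quotient. To begin, Lemma \ref{prop:rel} (applied with $\sim$ closed) gives $\pi(0)=\overline{\pi(0)}$, and Lemma \ref{lem:cong}.1] shows $\pi(0)$ is an additive subgroup of $\cF$. I handle Part 2 first. Proposition \ref{prop:alg}.2] already yields $rX\sim rY$ for $r\in\Q$. Given $t\in\R$ and rationals $r_n\to t$, Lemma \ref{lem:ext} (parts 1 and 2) gives $r_n(X-Y)\to t(X-Y)$ in the $F$-norm, and since $r_n(X-Y)\in \pi(0)$ and $\pi(0)$ is closed, the limit $tX-tY$ lies in $\pi(0)$, i.e.\ $tX\sim tY$. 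Specializing $Y=0$ shows $\pi(0)$ is stable under real scalars, so it is a closed $\R$-subspace of $\cF$ and $\pi$ is $\R$-linear.

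For Part 1, Assumption \ref{ass:1} on $\cF/\!\sim$ is immediate: $\pi$ is a semi-field morphism (Proposition \ref{prop:alg}.1]) hence preserves $\leq$, so applying $\pi$ to $-tE\leq X\leq tE$ gives $-t\pi(E)\leq \pi(X)\leq t\pi(E)$ with the same rational $t$. The inequality in Part 3 follows by the same device applied to any representative: for every $Z\in\pi(0)$, $\pi(X+Z)=\pi(X)$ and $-tE\leq X+Z\leq tE$ (for $t>r(X+Z)$) gives $r_\sim(\pi(X))\leq r(X+Z)\leq\|X+Z\|$ by Definition \ref{def:norm}.1]. Taking the infimum over $Z$ yields $r_\sim(\pi(X))\leq\|\pi(X)\|_1$.

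The main obstacle is Assumption \ref{ass:2} on $\cF/\!\sim$. Suppose $r_\sim(\pi(X))=0$ and pick $t_n\in\Q^+$ with $t_n\to 0$ and $-t_n\pi(E)\leq \pi(X)\leq t_n\pi(E)$. The upper bound unfolds as $X\oplus t_nE \sim t_nE$, so
\[ H_n := (X\oplus t_nE)-t_nE \in \pi(0). \]
Distributivity of $+$ over $\oplus$ rewrites this as $H_n=(X-t_nE)\oplus 0$. Since $\|t_nE\|=t_n\|E\|\to 0$, the continuity of $\oplus$ (Definition \ref{def:norm}.2]) and of $+$ give $H_n\to X\oplus 0=0\oplus X$, and closedness of $\pi(0)$ forces $0\oplus X\in\pi(0)$. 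The same argument applied to the lower bound produces $0\oplus(-X)\in\pi(0)$. Lemma \ref{lem:oplus}.3] then yields $X=(0\oplus X)-(0\oplus -X)\in\pi(0)$, i.e.\ $\pi(X)=0$.

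Completeness in Part 3 uses the standard telescoping trick: given a Cauchy sequence $\pi(X_n)$ in $\|\cdot\|_1$, extract a subsequence with $\|\pi(X_{n+1})-\pi(X_n)\|_1<2^{-n}$, choose $Z_n\in\pi(0)$ with $\|X_{n+1}-X_n+Z_n\|<2^{-n}$, and set $Y_n=X_n+\sum_{k<n}Z_k$; then $(Y_n)$ is Cauchy in $(\cF,\|\cdot\|)$, converges to some $Y$, and $\pi(Y_n)=\pi(X_n)$ gives $\|\pi(X_n)-\pi(Y)\|_1\leq\|Y_n-Y\|\to 0$. For Part 4, I observe that for every $Z_1,Z_2,Z'_1,Z'_2\in\pi(0)$ the elements $(X+Z_1)\oplus(Y+Z_2)$ and $(X'+Z'_1)\oplus(Y'+Z'_2)$ represent $X_1\oplus Y_1$ and $X'_1\oplus Y'_1$ respectively, so the $F$-norm inequality on $\cF$ bounds $\|X_1\oplus Y_1-X'_1\oplus Y'_1\|_1$ by $\max(\|X-X'+(Z_1-Z'_1)\|,\|Y-Y'+(Z_2-Z'_2)\|)$. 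Since $\pi(0)$ is a subgroup, $W_1=Z_1-Z'_1$ and $W_2=Z_2-Z'_2$ range independently over $\pi(0)$, so an $\varepsilon$-argument (picking each representative within $\varepsilon$ of its own infimum and passing $\max$ through the separate infima) gives the required inequality $\|X_1\oplus Y_1-X'_1\oplus Y'_1\|_1\leq\max(\|X_1-X'_1\|_1,\|Y_1-Y'_1\|_1)$.
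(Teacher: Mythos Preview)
Your proof is correct and follows essentially the same approach as the paper: the verification of Assumption~2 on the quotient via limits of elements of $\pi(0)$, the passage from $\Q$- to $\R$-linearity of $\pi(0)$ by closedness, the quotient-norm inequality, and the $F$-norm estimate on $\cF/\!\sim$ via representatives are all handled the same way. The only cosmetic differences are that for Assumption~2 the paper deduces $0\oplus X\sim X$ and $0\oplus X\sim 0$ directly from the two inequalities (rather than your decomposition $X=(0\oplus X)-(0\oplus -X)$), and in Part~4 the paper uses the single-shift identity $H+X\oplus Y=(H+X)\oplus(H+Y)$ instead of your four independent shifts; neither variation affects the argument.
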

\begin{proof} 1] Consider an element of $\cF/\sim$, it is of the form $\pi(X)$ for a suitable $X \in \cF$.
By Assumption 1 for $\cF$, there exists $t\in \Q^+$ such that 
$ - t E \leq X \leq t E$. This implies:
$$
-t \pi(E) \leq \pi(X) \leq t \pi(E)\,.
$$ Therefore, $(\cF/\sim , \pi(E))$ satisfies Assumption 1.  Assume now  that 
$r(\pi(X))=0$.  By Lemma \ref{lem:dist}.1] this means that: 
$$\forall t\in \Q^+,\;
 -t \pi(E) \leq \pi(X) \leq t \pi(E) \,.$$ This implies, by taking $t=1/n$,  that for 
 any $n\in \N^*$, there exist  $H_n, H'_n \in \pi(0)$ such that:
 $$
 \frac{-1}{n} E \oplus X= X + H_n, \;  \;X \oplus \frac{1}{n} E = \frac{1}{n} E + H'_n  \,.
  $$ Letting $n \rightarrow + \infty$ and using Definition \ref{def:norm}.2],  one obtains: 
  $$
  0 \oplus X = X + \lim_n H_n , \;  \;X \oplus 0 = 0 + \lim_n H'_n \,.
  $$ But by Lemma \ref{prop:rel}, $\pi(0)$ is closed. We then deduce that
  $0 \oplus X \sim X, \;  X \oplus 0 \sim 0$.  This  implies that $\pi(X)=0$ and
   $\cF/\sim$ satisfies the Assumption 2. 
   
   2] By Lemma \ref{lem:ext}.1] and by hypothesis, $\cF$ is a real vector space which is complete for $\| \, \|$.  
   By the previous Proposition, $\pi(0)$ is a $\Q-$sub vector space of $\cF$. Since 
   $\pi(0)$  is closed, it is also a real sub vector space. Then, using  Lemma \ref{lem:cong}.1],  one obtains easily the desired result.
   
   3] One checks easily that for all $X \in \cF$:
   $$
   r_\sim (\pi(X) )\,  \leq \,  \inf_{Z \in \pi(0)}\, r(X +Z)\, .
   $$ But, by Definition \ref{def:norm}.1], $r(X+ Z) \leq \| X+Z \|$, so that the required inequality follows  immediately. Lastly,  the fact that $(\cF/\sim , \| \, \|_1)$ is complete is an easy consequence of the completeness of $(\cF, \| \, \|)$.
   
   4] Let  $X,X',Y,Y' \in \cF$ be such that $\pi(X)=X_1,\, \pi(X')=X'_1,\, \pi(Y)=Y_1,\, \pi(Y')=Y'_1$. 
   Consider also $H \in \pi(0)$. By Definition \ref{def:norm}.2] one has:
   $$
   \| H + \, X \oplus Y - X' \oplus Y' \|= \| (H + X) \oplus (H+Y) - X' \oplus Y' \| \leq
   $$
   $$
   \max ( \| H+X-X'\| ,\|H+Y-Y'\| )\,.
   $$ Therefore:
   $$
   \| X_1 \oplus Y_1 - X'_1 \oplus Y_1' \|_1 \leq \max ( \| H+X-X'\| ,\|H+Y-Y'\| )\,.
   $$ Since this inequality holds for all $X, Y$ satisfying $\pi(X)=X_1, \pi(Y)=Y_1$,  we deduce:
   $$
   \| X_1 \oplus Y_1 - X'_1 \oplus Y_1' \|_1 \leq \inf_{A,B \in \pi(0), }\,  \max ( \| A+X-X'\| ,\|B+Y-Y'\|) \,.
   $$
   One checks easily that the right hand side is $\leq \max( \| X_1 -X'_1\|_1 , \| Y_1 -Y'_1\|_1)$. 
   The result is proved.
  \end{proof}

\s  Proposition \ref{prop:w}.3] suggests that it is not clear at all whether, or not, 
$\cF/\sim$ is complete for the norm $r_\sim (X_1)$. This is precisely the reason why 
we introduced the concept of $F-$norm in Definition \ref{def:norm}.
Let us explain the content of  this  on a concrete example.
\begin{example} We use the notations of Examples \ref{ex:K} and \ref{ex:K_1}. 
So $\cF = C^0(K, \R)$ is a Banach semi-field for the norm $r(X)= \max_{t \in K} | X(t) | $. 
Consider the congruence $\sim_{K_1}$ associated to a compact subset $K_1$ of $K$.
  Then,  $X \sim_{K_1} Y$ means that the continuous function $X-Y$ vanishes on $K_1$.
 Moreover,   $\pi(0)$ is exactly the set of continuous functions which vanish on $K_1$.
 Since a compact space is normal, Urysohn's extension Theorem \cite{Sc}[Page 347]  implies the following  natural identification: 
$$\cF/ \sim_{K_1} \, = \, C^0(K_1, \R)\, , 
$$ where the class $\pi(X) $ of the function $X$ is identified with $ X_{_{|K_1}}$.
 Now, the inequality of  Proposition \ref{prop:w}.3] means that:
$$
 \sup_{w \in K_1} | X_{|_{K_1}} (w) |\, \leq \, \inf_{ Z \in \pi(0)}\, \sup_{v \in K} | X(v) + Z(v) | \,.
$$ 
Actually, Urysohn's theorem implies that this inequality is in fact an equality. We shall come back to this in Section 5. 

\end{example}


\s

\subsection{Maximal Congruences. An analogue of Gelfand-Mazur's Theorem.} $\;$

\m 
In the rest of this Section we fix a complete semi-field $\cF$ for a $F-$norm $\| \, \|$ as in Definition \ref{def:dist}.

We recall below the natural partial order between congruences of $\cF$. It is analogous to the inclusion between 
ideals in a ring $A$, the trivial congruence of $\cF$ has only one class and is the analogue of $A$.
\begin{definition} \label{def:order} One defines a partial order $\leq$ on the set of congruences of $\cF$ 
in the following way. One says that $\sim_1 \,\leq \,\sim_2$ if: $$ \forall X,Y \in \cF,\; X \sim_1 Y \Rightarrow X \sim_2 Y\,.$$
Denote by $\pi_j(0)$ the class of the congruence $\sim_j, \, j=1,2$. 
Then $\sim_1\, \leq\, \sim_2$ if and only if $\pi_1(0) \subset \pi_2(0)$.
Next, one says that a congruence $\sim$ is maximal if  $\sim$ is not trivial and if $\sim \, \leq \widehat{\sim}$ implies either 
that $\sim = \widehat{\sim}$ or, that $\widehat{\sim}$ is the trivial congruence.

\end{definition}
\s
The following Theorem is the analogue of the fact that in a commutative complex Banach algebra, 
any ideal is included in a maximal ideal and that any maximal ideal  is closed.
\begin{theorem} \label{thm:max}
\item 1] Let $\sim_1$ be a non trivial congruence of $\cF$. 
Then there exists a maximal congruence $\sim_2$ on $\cF$ such that 
$\sim_1 \,\leq \,\sim_2$.
\item 2] Each maximal congruence $\sim$  on $\cF$ is closed, i.e. it defines a closed subset of $\cF \times \cF$.
\end{theorem}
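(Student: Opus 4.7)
My starting observation, used in both parts, is that a congruence $\sim'$ on $\cF$ is trivial if and only if its zero-class $\pi'(0)$ contains $E$. Indeed, by Proposition \ref{prop:alg}.2], $\pi'(0)$ is a $\Q$-subvector space, so $E \in \pi'(0)$ forces $\pm t E \in \pi'(0)$ for every $t \in \Q^+$; combined with Assumption \ref{ass:1} and the absorption Lemma \ref{lem:cong}.3], this places every $X \in \cF$ in $\pi'(0)$.

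For part 1], I would apply Zorn's lemma to the set $\mathcal{P}$ of non-trivial congruences extending $\sim_1$, ordered by $\leq$. Given a chain $(\sim_i)_{i \in I}$ in $\mathcal{P}$, set $\pi_\ast(0) := \bigcup_i \pi_i(0)$. Any finite family of elements of this union already lies in a single $\pi_i(0)$ by the chain property, so the algebraic closure and absorption \eqref{eq:s} conditions transfer to $\pi_\ast(0)$; by Lemma \ref{lem:cong}.2] this defines a congruence $\sim_\ast$ extending every $\sim_i$, and $\sim_\ast$ is non-trivial because $E \notin \pi_i(0)$ for each $i$. Zorn then delivers a maximal element $\sim_2$ of $\mathcal{P}$, which is also maximal in the sense of Definition \ref{def:order}: any strict extension of $\sim_2$ is either trivial, or would re-enter $\mathcal{P}$ and contradict its Zorn-maximality.

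For part 2], let $\sim$ be a maximal (in particular non-trivial) congruence, and let $\overline{\sim}$ be its closure in $\cF \times \cF$. By Lemma \ref{prop:rel}, $\overline{\sim}$ is again a congruence with zero-class $\overline{\pi(0)}$, and trivially $\sim \leq \overline{\sim}$. Maximality leaves two possibilities: either $\overline{\sim} = \sim$, in which case $\sim$ is closed and we are done, or $\overline{\sim}$ is the trivial congruence. I would rule out the second possibility as follows. If it held, the starting observation applied to $\overline{\sim}$ would give $E \in \overline{\pi(0)}$, hence some $H \in \pi(0)$ with $\| E - H \| < 1/2$. Since $r \leq \| \cdot \|$ (Definition \ref{def:norm}.1]), Lemma \ref{lem:spec}.2] yields $-\frac{1}{2} E \leq E - H \leq \frac{1}{2} E$, equivalently $\frac{1}{2} E \leq H \leq \frac{3}{2} E$. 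Doubling via Lemma \ref{lem:ext}.3] produces the order sandwich $0 \leq E \leq 2H$, with both $0$ and $2H$ in $\pi(0)$ (the latter because $\pi(0)$ is a $\Q$-subvector space). The absorption Lemma \ref{lem:cong}.3] then forces $E \in \pi(0)$, and by the starting observation $\sim$ would be trivial, a contradiction. Hence $\overline{\sim} = \sim$ and $\sim$ is closed.

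The main obstacle is part 2]: the classical Banach-algebra trick, in which a maximal ideal cannot approach the unit because the invertibles form an open set, has no direct counterpart here, since in a semi-field every non-zero element is already invertible for $+$. The substitute used above is purely order-theoretic: proximity of some $H \in \pi(0)$ to $E$ is converted by the spectral bracketing of Lemma \ref{lem:spec}.2] into the sandwich $0 \leq E \leq 2H$ with endpoints in $\pi(0)$, and then Lemma \ref{lem:cong}.3] together with the universal upper-bound property of $E$ (Assumption \ref{ass:1}) propagates this sandwich all the way to $\pi(0) = \cF$.
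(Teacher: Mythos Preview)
Your proof is correct and follows essentially the same approach as the paper's. Both parts use the same ideas: Zorn's lemma on non-trivial congruences for part~1], and for part~2] the observation that if $\overline{\sim}$ were trivial one could find $H\in\pi(0)$ close to $E$, then use the order bracketing $0\leq cE\leq H$ together with Lemma~\ref{lem:cong}.3] and the $\Q$-vector-space structure of $\pi(0)$ to force $E\in\pi(0)$, contradicting non-triviality; your version is in fact slightly more explicit (you isolate the ``$E\in\pi'(0)\Leftrightarrow\sim'$ trivial'' observation up front and verify the chain upper bound more carefully), but the substance is identical.
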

\begin{proof}
1] Denote by $\Lambda$ the set of non trivial congruences $\sim$ of $\cF$ such that $\sim_1 \leq \sim$.
Consider a chain $\cC \subset \Lambda$, this means that $\cC$ is totally ordered for the order $\leq$.
One then defines a congruence $\sim_u$ on $\cF$ by saying that $X \sim_uY$ if there exists 
$\sim \in \cC$ such that $X \sim Y$. Clearly, $\sim_u$ is an upper bound for $\cC$ in $\Lambda$. Then, 
by Zorn Lemma, $\Lambda$ admits a maximal element. This proves the result.

 2] Suppose, by the contrary, that $\sim$ is not closed. The  hypothesis that $\sim$ is 
maximal  and Lemma \ref{prop:rel}  imply  that 
$\overline{\sim}$ is the trivial congruence so that in particular  $ 0 \, \overline{\sim} \,E$. 
This means that $E$ belongs to the closure of $\pi (0)$, the class of $0$ for $\sim$.
Let $\epsilon \in ]0, 1/5[ \cap  \Q$, then we can find $Y \in  \pi(0) $ such that $ \| Y -E \| \leq \epsilon/2$. 
But, by Definition \ref{def:norm}.1], $r(Y-E) \leq \| Y -E \|$ so that we can write:
$$
 -\epsilon E \leq Y-E \leq \epsilon E\,.
$$ This implies that $0 \leq (1-\epsilon) E \leq Y$. But since $0 \sim Y$, Lemma \ref{lem:cong}.3] shows that 
$(1-\epsilon) E \sim 0$. Recall that $ 1 - \epsilon \in \Q^{+*}$, then, since $\pi(0)$ is a $\Q-$sub vector space of $\cF$, we obtain that 
$\forall t \in \Q, \, t  E \sim 0$. By Assumption 1 and Lemma \ref{lem:cong}.3],  we conclude
 that for all $X \in \cF$, $X \sim 0$. This is a contradiction.
 The Theorem is proved.
\end{proof}
\s
The following Theorem is analogous to the one of Gelfand-Mazur stating that a complex Banach algebra which is a field is isomorphic to $\C$. 
\begin{theorem} \label{thm:gm} Let $\sim$ be a maximal congruence on $\cF$.
 Then   $\cF/\sim\,  = \, \{ t \pi(E)/ \, t \in \R\} $,  and the map $t \pi(E) \mapsto t$
 defines an  isomorphism of  semi-fields from $(\cF, \oplus, +)$ onto
  $(\R, {\max}( ,), +)$.
\end{theorem}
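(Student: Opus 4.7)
My plan is to use Theorem \ref{thm:max}.2] together with Proposition \ref{prop:w} to transfer the problem to $\cF_1 := \cF/\sim$. Since $\sim$ is maximal it is closed, so $\cF_1$ is a complete real vector space semi-field with order unit $E_1 := \pi(E)$, satisfying Assumptions \ref{ass:1} and \ref{ass:2}. Any congruence on $\cF_1$ pulls back to a congruence on $\cF$ containing $\sim$, so maximality forces the only congruences on $\cF_1$ to be the identity and the trivial (everything-equivalent) one; this dichotomy will be the only algebraic input used beyond Proposition \ref{prop:w}.

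For every $Y \in \cF_1$ I introduce $t_+(Y) := \inf\{s \in \R : Y \leq s E_1\}$ and $t_-(Y) := \sup\{s \in \R : s E_1 \leq Y\}$. Both are finite and lie in $[-r_\sim(Y), r_\sim(Y)]$ by Assumption \ref{ass:1} and Lemma \ref{lem:spec}.2]. A standard limit argument invoking the continuity of $\oplus$ (Definition \ref{def:norm}.2]) and of the scalar action (Lemma \ref{lem:ext}.1]) yields $t_-(Y) E_1 \leq Y \leq t_+(Y) E_1$. It then suffices to prove $t_+(Y) = t_-(Y)$ for every $Y$, since this identifies $Y$ with $t_+(Y) E_1$ and gives $\cF_1 = \R E_1$.

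I argue this by contradiction: assume some $Y$ has $t_-(Y) < t_+(Y)$, and set $U := t_+(Y) E_1 - Y$. A short computation gives $U \geq 0$, $U \neq 0$, $t_-(U) = 0$, and $t_+(U) = t_+(Y) - t_-(Y) > 0$; in particular $t_-(nU) = n\, t_-(U) = 0$ for every $n \in \N$. I now define a relation on $\cF_1$ by $X \sim_* X'$ iff there exists $n \in \N$ with $-nU \leq X - X' \leq nU$. Reflexivity, symmetry, transitivity, and compatibility with $-$ and with $+$ are immediate because $U \geq 0$. The main obstacle of the proof, as I expect, is showing stability under $\oplus$: given $X - X' \leq nU$, I apply the distributivity identity $(A + C) \oplus (B + C) = (A \oplus B) + C$ with $A = X'$, $B = Z - nU$, $C = nU$ to rewrite $(X' + nU) \oplus Z = (X' \oplus (Z - nU)) + nU$; since $Z - nU \leq Z$ (valid because $nU \geq 0$), I deduce $(X' + nU) \oplus Z \leq (X' \oplus Z) + nU$, hence $(X \oplus Z) - (X' \oplus Z) \leq nU$; combining with the symmetric bound yields $-nU \leq (X \oplus Z) - (X' \oplus Z) \leq nU$, so $\sim_*$ is indeed a congruence on $\cF_1$.

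The contradiction is now immediate. On one hand $U \sim_* 0$ (take $n = 1$, using $-U \leq 0 \leq U$) and $U \neq 0$, so $\sim_*$ is not the identity; on the other hand $E_1 \sim_* 0$ would require $E_1 \leq nU$ for some $n$, forcing $t_-(nU) \geq 1$, which contradicts $t_-(nU) = 0$, so $\sim_*$ is not the trivial congruence either. This violates the dichotomy of the first paragraph, so $t_-(Y) = t_+(Y)$ for every $Y$ and $\cF_1 = \R E_1$. To finish, the map $tE_1 \mapsto t$ is well defined because $\sim$ being non-trivial forces $E_1 \neq 0$, hence $r_\sim(tE_1) = |t|$ and $tE_1 = 0 \Rightarrow t = 0$; it is additive by distributivity; and it is compatible with $\oplus$ since $t \leq t'$ implies $tE_1 \leq t'E_1$ by Lemma \ref{lem:ext}.4] applied to $E_1 \geq 0$, so $tE_1 \oplus t'E_1 = \max(t,t')\,E_1$. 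This produces the required isomorphism onto $(\R, \max(\,,\,), +)$.
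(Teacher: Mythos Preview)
Your proof is correct and follows essentially the same route as the paper: reduce to the quotient $\cF_1=\cF/\sim$ via Theorem \ref{thm:max}.2] and Proposition \ref{prop:w}, pick an element not on the line $\R E_1$, manufacture a nonzero $U\geq 0$ with $t_-(U)=0$ (the paper's $\Xi_1$), and build the congruence $\{H:-\lambda U\leq H\leq \lambda U\}$ to contradict maximality. Your verification of stability under $\oplus$ and your non-triviality argument via $t_-(nU)=0$ are exactly the content of the paper's internal Lemma and the paragraph following it; the only cosmetic difference is that you index by $n\in\N$ rather than $\lambda\in\R^+$, which is harmless since $U\geq 0$.
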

\begin{proof} To simplify the notations set $E_1= \pi(E)$ and denote a general element of $\cF/\sim$ 
by $X_1$ (i.e. with the subscript $1$). 
Suppose, by the contrary, that there exists $X \in \cF$ such that $X_1= \pi(X)$ 
does not belong to $\R E_1$. By the previous Theorem $\sim$ is closed, so that Proposition \ref{prop:w}.4] allows to apply Lemma \ref{lem:spec}.3] to $\cF/\sim$. Then, at the expense of replacing $X_1$ by $-X_1$, we can assume 
that $r_\sim ( X_1 )$ is the smallest real $t$ such that $X_1 \leq t E_1$. In order to simplify the notation, set:
$$
\Xi_1= \bigl( r_\sim ( X_1 ) E_1 - X_1\bigr) \, \geq 0\,.
$$

Consider now the  subset  $\cH_1$ of $\cF/\sim $ defined by 
\begin{equation} \label{eq:cong}
\cH_1= \{ H_1 \in \cF/\sim \; , \; \exists \lambda_1 \in \R^+,\; -\lambda_1 \Xi_1 \leq H_1 \leq
 \lambda_1 \Xi_1 \} \,.
\end{equation} Using Proposition \ref{prop:w}.4] and Lemma \ref{lem:ext}, one checks easily that $\cH_1$ is a real sub vector space of $\cF/ \sim$. 
We are going to show that the relation $\sim_1$, defined by $A_1 \sim_1 B_1$ if $A_1-B_1 \in \cH_1$,  induces a non trivial congruence on $\cF/\sim$ by following 
Lemma \ref{lem:cong}.2].  

Consider $H_1, H_2 \in \cH_1$ satisfying the inequalities of \eqref{eq:cong} with respectively 
$\lambda_1, \lambda_2 \geq 0$. Then, using Lemma \ref{lem:oplus} it is easily seen that:
$$
-\max( \lambda_1, \lambda_2) \,\Xi_1 \leq H_1\oplus H_2 \leq \max( \lambda_1, \lambda_2)\, \Xi_1\,.
$$ Therefore, $H_1 \oplus H_2 \in \cH_1$. 

By Lemma \ref{lem:cong}.2], the next Lemma will imply that  the relation $\sim_1$ (associated above to $ \cH_1$),  defines a congruence 
on $\cF/\sim$.

\begin{lemma}
 Consider  $A_1, B_1 \in \cF/\sim$ and $H_1 \in \cH_1$ as in 
\eqref{eq:cong}. Then there exists $H_2 \in \cH_1$ such that
 $$(A_1 + H_1) \oplus B_1 = A_1 \oplus B_1 + H_2\,.$$ 
 \end{lemma} 
 \begin{proof} Using the inequality  $0 \leq \Xi_1$ and the fact that $H_1 \in \cH_1$ , we obtain: 
 $$ (A_1 +H_1)\leq A_1 + \lambda_1 \Xi_1, \;  B_1 \leq B_1 + \lambda_1 \Xi_1 \,.
 $$
 Using the commutativity (and associativity) of $\oplus$ and the definition of $\leq$, we then deduce:
 $$
 (A_1 + H_1) \oplus B_1 \oplus \bigl( (A_1 \oplus B_1) +\,   \lambda_1 \Xi_1 \bigr)= 
 (A_1 + H_1) \oplus B_1 \oplus \Bigl( (A_1 + \lambda_1 \Xi_1)  \oplus (B_1 + \lambda_1 \Xi_1)  \Bigr) \,=
 $$
 $$
 (A_1 + \lambda_1 \Xi_1)  \oplus (B_1 + \lambda_1 \Xi_1) = (A_1 \oplus B_1) +\,   \lambda_1 \Xi_1\,.
 $$ But this means precisely that:
 $$
 (A_1 + H_1) \oplus B_1 \leq (A_1 \oplus B_1) +\,   \lambda_1 \Xi_1\,.
 $$ Similarly, we can prove that:
 $
 (A_1 \oplus B_1) -\,   \lambda_1 \Xi_1\, \leq (A_1 + H_1) \oplus B_1\,.
 $ Therefore, we obtain:
 $$
 -\,   \lambda_1 \Xi_1\, \leq (A_1 + H_1) \oplus B_1 \, - \, (A_1 \oplus B_1) \leq \lambda_1 \Xi_1\,.
 $$ The Lemma is proved.
 \end{proof}
Now we check that $\sim_1$ is not trivial. Suppose, by the contrary, that $E_1 \in \cH_1$, then:
$$
\exists \lambda_1 \in \R^{+*},\; 0\leq  E_1 \leq  \lambda_1 \Bigl( r_\sim (X_1) E_1 -X_1 \Bigr)\,.
$$ 
This implies that $ \lambda_1 X_1 \leq (\lambda_1 \,r_\sim (X_1)-1)  E_1$. Therefore, by Lemma \ref{lem:ext} one obtains:
$$
X_1 \leq \frac{(\lambda_1\, r_\sim (X_1)-1)}{\lambda_1}  E_1\,.
$$ But this inequality contradicts the definition of $r_\sim (X_1)$ (see Proposition \ref{prop:w}.1]).
Therefore, $\sim_1$ is indeed not trivial, so the set of equivalence classes of $\sim_1$ in 
 $\cF/\sim$ defines  a (non trivial) semi-field $\cS_1$. 
  Denote by $\Pi_1: (\cF/\sim) \rightarrow \cS_1$ the associated projection. Consider then the (surjective) homomorphism 
 of semi-fields $ \Pi_1 \circ \pi: (\cF, \oplus, +)  \rightarrow (\cS_1, \oplus, +)$.  One defines a congruence $\widehat{\sim}$ on $\cF$ 
  by saying that
 $X \,\widehat{\sim}\, Y$ if $\Pi_1 \circ \pi(X)=\Pi_1 \circ \pi(Y)$. 
 
 By hypothesis $\Xi_1$ is not zero and by construction $\Pi_1(\Xi_1)=0$. This means that 
 
 \noindent $r_\sim (\pi(X) ) E - X$ does not belong to the class of $0$ for $\sim$ but belongs to the class of $0$
 for $\widehat{\sim}$.
  
 We then conclude easily that $\sim \leq \widehat{\sim}$, and that
$ \widehat{\sim}$ is different both from $\sim$ and the trivial congruence. This contradicts the maximality of 
$\sim$. Therefore, we have proved that:
$$
\forall X_1 \in \cF/\sim ,\; \exists \lambda \in \R,\; X_1 = \lambda E_1\,.
$$ This $\lambda$ is unique because, $\sim$ being non trivial,  $\cF/\sim$ is a non trivial real vector space.
Now thanks to Proposition \ref{prop:w}.4] we can apply Lemma \ref{lem:ext}.4] to $\cF/\sim$. 
Then we obtain that for any $t,t' \in \R$, $ t E_1\oplus t' E_1= \max( t, t') \, E_1$. Therefore, we can conclude that the map 
$\lambda E_1 \mapsto \lambda$ defines an isomorphism of semi-fields of characteristic $1$ between 
$(\cF/\sim \, , \oplus, +)$ and $(\R, \max ( , ) , +)$. The Theorem is  proved.

\end{proof}


\m 
The next Theorem shows 
that for any $X\in \cF$ one can find a $\phi \in S_E(\cF)$ such that $| \phi(X)|$ takes the maximal value 
permitted by Lemma \ref{lem:char}.1].
\begin{theorem} \label{thm:char} Let $X \in \cF \setminus \{0\}$. Then there exists $\phi \in S_E(\cF)$ such that 
$|\phi(X)|= r(X)$.
\end{theorem}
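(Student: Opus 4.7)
My plan is to mirror, inside $\cF$ itself, the internal construction used in the proof of Gelfand--Mazur (Theorem \ref{thm:gm}): rather than seek the character directly, I build a non-trivial congruence whose class of $0$ is forced to contain a specific element that witnesses $r(X)$, then extract a character from a maximal congruence lying above it via Theorems \ref{thm:max} and \ref{thm:gm}.

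First, since $X\neq 0$, Assumption \ref{ass:2} gives $r := r(X) > 0$. By Lemma \ref{lem:spec}.3], $r = \max\bigl(r(0\oplus X),\,r(0\oplus -X)\bigr)$; replacing $X$ by $-X$ if necessary (which leaves $|\phi(X)|$ unchanged), I may assume $r = r(X_{+})$ where $X_{+} := 0\oplus X \geq 0$. By Lemma \ref{lem:spec}.2] the element
\[
\Xi \,:=\, rE - X_{+}
\]
satisfies $\Xi \geq 0$. I now define
\[
\cH \,:=\, \bigl\{ H\in \cF \,:\, \exists\, \lambda \geq 0,\ -\lambda\,\Xi \,\leq\, H \,\leq\, \lambda\,\Xi \bigr\}.
\]

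The key algebraic step is to check that $\cH$ is a sub semi-field of $\cF$ satisfying the absorption property \eqref{eq:s}, so that Lemma \ref{lem:cong}.2] produces a congruence $\sim_{0}$ whose class of $0$ is exactly $\cH$. Stability of $\cH$ under $+$ and negation is immediate; stability under $\oplus$ uses $\Xi\geq 0$ together with Lemma \ref{lem:ext}.4] to get $\lambda_{1}\Xi\oplus\lambda_{2}\Xi=\max(\lambda_{1},\lambda_{2})\Xi$. For the absorption \eqref{eq:s}, given $Z\in\cH$ with witness $\lambda$, I need $-\lambda\Xi \leq (X+Z)\oplus Y - (X\oplus Y) \leq \lambda\Xi$. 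The upper bound follows from $X+Z\leq X+\lambda\Xi$ and then $(X+\lambda\Xi)\oplus Y \leq (X+\lambda\Xi)\oplus(Y+\lambda\Xi) = (X\oplus Y)+\lambda\Xi$, using Lemma \ref{lem:oplus}.1] and distributivity. The lower bound is symmetric: $X-\lambda\Xi\leq X+Z$ gives $(X\oplus Y)-\lambda\Xi = (X-\lambda\Xi)\oplus(Y-\lambda\Xi)\leq (X-\lambda\Xi)\oplus Y\leq (X+Z)\oplus Y$.

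Non-triviality of $\sim_{0}$ is the decisive point: if $E\in \cH$ then for some $\lambda>0$ one has $E \leq \lambda\,\Xi = \lambda rE-\lambda X_{+}$, yielding $\lambda X_{+}\leq (\lambda r -1)E$, hence, by Lemma \ref{lem:ext}.4], $X_{+}\leq (r-1/\lambda)E$. Picking a rational $t$ with $r-1/\lambda < t < r$ gives $X_{+}\leq tE$, contradicting $r=r(X_{+})$ being the infimum over such rationals. Thus $\sim_{0}$ is non-trivial, so by Theorem \ref{thm:max}.1] it is contained in a maximal congruence $\sim$. Theorem \ref{thm:gm} then supplies a semi-field isomorphism $\cF/\!\sim\,\simeq\,(\R,\max,+)$ sending $t\,\pi(E)\mapsto t$; composing with $\pi$ defines a character $\phi\in S_{E}(\cF)$. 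Since $\Xi\in\cH$ lies in the $\sim$-class of $0$, one has $\phi(\Xi)=0$, i.e.\ $\phi(X_{+})=r$. But $\phi(X_{+})=\phi(0\oplus X)=\max(0,\phi(X))=r>0$ forces $\phi(X)=r=r(X)$, whence $|\phi(X)|=r(X)$.

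The main obstacle is verifying the absorption property \eqref{eq:s} for $\cH$; once that routine distributivity calculation is done, non-triviality is a one-line contradiction with the definition of $r(X_{+})$, and the extraction of $\phi$ is mechanical given Theorems \ref{thm:max} and \ref{thm:gm}. A subsidiary subtlety worth double-checking is that the reduction to $r(X)=r(0\oplus X)$ by passing from $X$ to $-X$ does yield an $|\phi(X)|=r(X)$ equality, which it does because $\phi(-X)=-\phi(X)$.
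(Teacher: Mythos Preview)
Your proof is correct and follows essentially the same route as the paper: build the congruence $\sim_0$ from the ``gap'' element $\Xi\geq 0$, show $E\notin\cH$ so $\sim_0$ is non-trivial, pass to a maximal congruence via Theorem~\ref{thm:max}, and read off the character from Theorem~\ref{thm:gm}. The only cosmetic difference is that the paper takes $\Xi = r(X)E - X$ (after the same replacement of $X$ by $-X$) whereas you take $\Xi = rE - X_+$; this costs you one extra line at the end to recover $\phi(X)$ from $\phi(X_+)=\max(0,\phi(X))$, but is otherwise identical.
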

\begin{proof} At the expense of replacing $X$ by $-X$, we can assume that 
$r(X)$ is the smallest real $\geq 0$ such that $X \leq r(X) E$. Consider the subset  $\cH$
 of $\cF$ defined by:
 $$
 \cH = \{ H \in \cF/\; \exists \lambda \in \R^+,\; -\lambda (r(X) E -X ) \leq H \leq \lambda ( r(X) E -X) \}\,.
 $$ The arguments of the proof of the previous Theorem show that one defines a non trivial congruence 
 $\sim_0$ on $\cF$ by saying that $A \sim_0 B$ if $A-B \in \cH$. By construction $X \sim_0 r(X) E $. 
 By Theorem \ref{thm:max}, there exists a maximal congruence $\sim$ such that 
 $\sim_0 \leq \sim$. Consider the projection $\pi: \cF \rightarrow \cF/\sim$, since $\sim_0 \leq \sim$,  
 one has $\pi(X) =  \pi(r(X) E)$. But $\sim$ is closed, so by Proposition \ref{prop:w}.2]:
 $$
\pi(X)=  \pi(r(X) E) = r(X) \pi (E) \,.
 $$
  Consider next the isomorphism 
 $\Psi: (\cF/\sim, \oplus, +) \rightarrow (\R, \max( ,) , +)$ given by the previous Theorem, by construction 
  $\Psi (\pi(E))=1$. 
 Now, the previous results of this proof show that, by settting 
 $$\phi= \Psi \circ \pi: \cF \rightarrow \R\,,$$ 
 one defines a character of $\cF$ such that $\phi(E)=1$ and $\phi(X) = r(X)$. 
 The Theorem is proved.
\end{proof}
\m
\section{The Classification Theorem \ref{thm:Tilby}. Applications.} $\;$

\m 
In this Section we consider a Banach semi-field $\cF$: it is complete 
for the norm $r(X)$ (cf Definition \ref{def:dist}).
The following classification result is an analogue of the Gelfand-Naimark Theorem which 
classifies the complex commutative $C^*-$algebras. 

\begin{theorem} \label{thm:Tilby} Let $(\cF, \oplus, +)$ be a commutative perfect Banach semi-field of characteristic $1$.
Then the  map 
$$
\Theta: (\cF, \oplus, +)  \rightarrow (C^0(S_E(\cF), \R), {\max}\,( ,), +)
$$
$$
X \mapsto \Theta_X: \;\phi  \mapsto \phi(X)= \Theta_X (\phi)
$$ defines an isometric isomorphism of Banach semi-fields: 
$$\forall X \in \cF, \; r(X)= \sup_{ \phi \in S_E(\cF)} | \Theta_X (\phi) |\,.
$$ Moreover, $\Theta_E\, $ is the constant function ${\bf 1}$: $\phi \mapsto 1$.

\end{theorem}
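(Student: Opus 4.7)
The plan is to verify in turn that $\Theta$ is (i) well-defined with continuous values, (ii) a semi-field homomorphism normalizing $E$ to $\mathbf{1}$, (iii) isometric with closed image, and (iv) surjective. Steps (i)--(iii) are essentially bookkeeping from the results already established. Indeed, by the very definition of the topology $\cT$ on $S_E(\cF)$ (Definition \ref{def:topo}), each evaluation map $\phi\mapsto\phi(X)$ is continuous, so $\Theta_X\in C^0(S_E(\cF),\R)$, and $S_E(\cF)$ is compact by Theorem \ref{thm:comp}. The homomorphism property $\Theta_{X+Y}=\Theta_X+\Theta_Y$ and $\Theta_{X\oplus Y}=\max(\Theta_X,\Theta_Y)$ follows immediately from the defining axioms of a character, while $\Theta_E(\phi)=\phi(E)=1$ is the very normalization that defines $S_E(\cF)$.

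For the isometry in (iii), Lemma \ref{lem:char}.1] yields $|\Theta_X(\phi)|\leq r(X)$ for every $\phi$, so $\|\Theta_X\|_\infty\leq r(X)$, while Theorem \ref{thm:char} provides, for each nonzero $X$, a $\phi\in S_E(\cF)$ with $|\phi(X)|=r(X)$; hence the supremum is attained and $\|\Theta_X\|_\infty=r(X)$. In particular $\Theta$ is injective, and because $(\cF,r)$ is Banach by hypothesis, the isometric image $\Theta(\cF)$ is a complete, hence closed, subspace of $(C^0(S_E(\cF),\R),\|\cdot\|_\infty)$.

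The hard step is surjectivity, and the plan is to deduce it from the Kakutani--Stone lattice form of the Stone--Weierstrass theorem applied to the compact Hausdorff space $S_E(\cF)$. One checks that $L:=\Theta(\cF)$ satisfies the four hypotheses required:
\begin{itemize}
\item $L$ is an $\R$-vector subspace of $C^0(S_E(\cF),\R)$. This uses crucially Lemma \ref{lem:char}.2] (which requires $\cF$ to be complete) to get $\Theta_{tX}=t\Theta_X$ for $t\in\R$, combined with additivity of characters.
\item $L$ is a sublattice: closure under $\max$ comes from the character axiom for $\oplus$, and closure under $\min$ follows from the identity $\min(\Theta_X,\Theta_Y)=-\max(-\Theta_X,-\Theta_Y)=\Theta_{-(-X\oplus-Y)}$ together with the $\R$-linearity just noted.
\item $L$ contains all real constants, since $t\mathbf{1}=\Theta_{tE}$ for every $t\in\R$.
\item $L$ separates points: if $\phi_1\neq\phi_2$ in $S_E(\cF)$ then by definition there exists $X\in\cF$ with $\Theta_X(\phi_1)=\phi_1(X)\neq\phi_2(X)=\Theta_X(\phi_2)$.
\end{itemize}
The Kakutani--Stone theorem then delivers density of $L$ in $C^0(S_E(\cF),\R)$; combined with the closedness already obtained in step (iii) this yields $L=C^0(S_E(\cF),\R)$, completing the proof.

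The main obstacle is the surjectivity argument above, and more precisely the verification that $\Theta(\cF)$ is an $\R$-vector sublattice stable under $\min$ as well as $\max$. This verification is not purely formal: it hinges on the fact, proved through the full congruence/Gelfand--Mazur machinery (Theorems \ref{thm:gm} and \ref{thm:char}), that characters are automatically $\R$-linear and abundant enough to separate points, so that once Stone--Weierstrass--Kakutani is available there is no further work.
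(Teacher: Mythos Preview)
Your proof is correct and follows essentially the same route as the paper's. The paper invokes ``Lemma 1 of \cite{Sc}[Page 376]'' (precisely the Kakutani--Stone lattice form you name), verifies closure under $\min$ via the same identity $-\Theta_{-X\oplus -Y}=\min(\Theta_X,\Theta_Y)$, and replaces your ``separates points $+$ contains constants'' with the equivalent explicit two-point interpolation $\Theta_{\lambda Z+\mu E}(\phi_i)=\alpha_i$; otherwise the arguments coincide step for step.
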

\begin{proof} 
By the very definition of the topology of $S_E(\cF)$  (see Definition \ref{def:topo}), 
for each $X\in \cF$, $\Theta_X$ is continuous on $S_E(\cF)$. 
Moreover, Lemma \ref{lem:char}.2] shows that  $\Theta$ is $\R-$linear. 

The Definition \ref{def:char}  also shows that for each $\phi \in S_E(\cF)$:
$$
\forall (X, Y)  \in \cF\times \cF ,\; \Theta_{X\oplus Y} (\phi) = \max ( \Theta_X(\phi), \Theta_Y(\phi)), \; {\rm and} \; 
\Theta_E(\phi)=1 \,.
$$
Next, Lemma \ref{lem:char}.1] and Theorem \ref{thm:char} imply that:
$$\forall X \in \cF, \; r(X)= \sup_{ \phi \in S_E(\cF)} | \Theta_X (\phi) |\,.
$$ So we deduce that $\Theta$ defines an injective homomorphim of semi-fields 
which is isometric with respect to the  norms. Since $\cF$ is a Banach vector space for the norm $r(X)$, 
we then conclude that $\Theta( \cF)$ is a closed sub vector space of $C^0(S_E(\cF), \R) $ endowed with 
the supremum norm.
We are going to  apply the Lemma 1 of \cite{Sc}[Page 376] in order to show that $\Theta( \cF)$ is dense 
in $C^0(S_E(\cF), \R) $. Observe that for each $(X, Y, \phi)  \in \cF\times \cF\times S_E(\cF)$:
$$
-\Theta_{-X\oplus -Y} (\phi) =- \max ( \Theta_{-X}(\phi), \Theta_{-Y}(\phi))= \min ( \Theta_X(\phi), \Theta_Y(\phi))\,.
$$ Therefore, if $f,g \in \Theta( \cF)$ then $\min(f,g)$ and $\max (f,g)$ also belong  to $\Theta( \cF)$.
Then we distinguish two cases.

First case: $S_E(\cF)$ is reduced to a point. 

Then $C^0(S_E(\cF), \R) $ is a real line and $\Theta$ is clearly 
an isomorphism.

Second case: $S_E(\cF)$ is not reduced to a point.

 Consider then any two different points $\phi_1$ and $\phi_2$ of $S_E(\cF)$
and also two reals $\alpha, \beta$. Since $\phi_1$ and $\phi_2$ are not equal, we can find 
$Z\in \cF$ such that $a_1= \phi_1(Z) \not=\phi_2(Z) = a_2$. Set:
$$
\lambda = \frac{\alpha - \beta}{a_1 - a_2} ,\; \mu = \frac{-a_2 \alpha + a_1 \beta}{a_1-a_2}\,.
$$ An easy computation shows that:
$$
\Theta_{ \lambda Z + \mu E} (\phi_1)= \alpha,\; \Theta_{ \lambda Z + \mu E} (\phi_2)= \beta\,.
$$ Then,  Lemma 1 of \cite{Sc}[Page 376]  shows that $\Theta( \cF)$ is dense in $C^0(S_E(\cF), \R) $. The Theorem 
is proved.
\end{proof}

Let us briefly examine the case where $\cF$ is not Banach but only  complete for an $F-$norm (cf Definition \ref{def:norm}).
\s 
\begin{proposition}  Assume that  $\cF$  is complete for a $F-$norm.
Then the analogue of the map $\Theta$ of Theorem \ref{thm:Tilby} is   (only) an injective continuous 
homomorphism of semi-fields with dense range.
\end{proposition}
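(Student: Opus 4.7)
The plan is to recycle the proof of Theorem \ref{thm:Tilby} and identify precisely which steps survive the weakening of the hypothesis from $r$-completeness to completeness for a $F$-norm $\|\cdot\|$. All the machinery of Section 2 and Section 3 (in particular Theorems \ref{thm:max}, \ref{thm:gm} and \ref{thm:char}) was already developed under the assumption that $\cF$ is complete for an $F$-norm, so the only step in Theorem \ref{thm:Tilby} that is really at risk is the isometry identity, which in turn was what forced $\Theta(\cF)$ to be closed.

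First I would verify that $\Theta$ is a homomorphism of semi-fields: for every $\phi \in S_E(\cF)$ the relations $\Theta_{X \oplus Y}(\phi) = \max(\Theta_X(\phi), \Theta_Y(\phi))$, $\Theta_{X+Y}(\phi) = \Theta_X(\phi) + \Theta_Y(\phi)$ and $\Theta_E(\phi) = 1$ are nothing but the defining conditions of a normalized character (Definition \ref{def:char}), and continuity in $\phi$ is built into the topology $\cT$ of Definition \ref{def:topo}. Next, injectivity follows from Theorem \ref{thm:char}: for any $X \neq 0$, Assumption \ref{ass:2} yields $r(X) > 0$, and the theorem produces a character $\phi \in S_E(\cF)$ with $|\phi(X)| = r(X) > 0$, so $\Theta_X$ is not identically zero. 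For continuity of $\Theta$, I would estimate
$$
\sup_{\phi \in S_E(\cF)} |\Theta_X(\phi) - \Theta_Y(\phi)| = \sup_{\phi} |\phi(X-Y)| \leq r(X-Y) \leq \|X - Y\|,
$$
using Lemma \ref{lem:char}.1] and the inequality $r(\cdot) \leq \|\cdot\|$ from Definition \ref{def:norm}.1]; hence $\Theta$ is $1$-Lipschitz into $(C^0(S_E(\cF), \R), \|\cdot\|_\infty)$. Finally, for density the argument of Theorem \ref{thm:Tilby} carries over verbatim: $\Theta(\cF)$ is a sub-lattice (closed under $\max$ via $\Theta_{X \oplus Y}$ and under $\min$ via $-\Theta_{(-X) \oplus (-Y)}$), it contains all constants since $\Theta_{tE} = t\,\mathbf{1}$ for $t \in \R$, and it separates points of $S_E(\cF)$ by the very definition of equality of characters; Lemma 1 of \cite{Sc}[Page 376] then gives density in $C^0(S_E(\cF), \R)$.

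The main (and only) obstacle is that in the $F$-norm setting one only has $\sup_\phi |\Theta_X(\phi)| = r(X)$, which in general is strictly smaller than $\|X\|$ (as illustrated by the Lipschitz example $\cF_0$ preceding Lemma \ref{lem:ext}). Consequently $\Theta$ is no longer an isometry with respect to the $F$-norm on the source and the sup norm on the target, so completeness of $(\cF, \|\cdot\|)$ does not force $\Theta(\cF)$ to be closed in $C^0(S_E(\cF), \R)$; the argument thus stops at dense range and cannot be promoted to surjectivity. This defect is intrinsic, not a limitation of the method.
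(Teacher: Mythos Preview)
Your proposal is correct and follows exactly the approach of the paper: the paper's own proof simply states that the argument of Theorem \ref{thm:Tilby} carries over verbatim except that, since $\Theta$ is no longer isometric for a general $F$-norm, the image can only be shown to be dense rather than closed. Your write-up fills in the same steps with more detail (homomorphism, injectivity via Theorem \ref{thm:char}, continuity via $r\le\|\cdot\|$, density via the lattice Stone--Weierstrass argument), and correctly identifies the failure of isometry as the sole obstruction to surjectivity.
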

\begin{proof} The proof of Theorem \ref{thm:Tilby} extends here verbatim except at the last stage. Indeed, since 
$\Theta$ is no more isometric when $\cF$ is endowed with a $F-$norm, we can only conclude that the range of $\Theta$ is dense.
\end{proof}

As a first application, we consider the real Banach algebra of Witt vectors $\overline{W}( \cF, E)$ that  Connes-Consani (\cite{CC1}, \cite{C}) constructed and associated 
functorially to $(\cF, E)$. The authors  observed that  the  norm of $\overline{W}( \cF, E)$ is not $C^*$ (\cite{C}) and pointed out that 
its spectrum should contain some interesting information. 

\begin{theorem} \label{thm:Connes} The spectrum of $\overline{W}( \cF, E) \otimes \C$ coincides with $S_E(\cF)$.
\end{theorem}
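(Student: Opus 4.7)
The plan is to combine the classification theorem just established with the functorial nature of the Connes--Consani Witt vector construction and reduce the spectrum computation to a known result of \cite{C}.

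First, by Theorem \ref{thm:Tilby}, there is an isometric isomorphism of Banach semi-fields
\[
\Theta : (\cF, \oplus, +) \xrightarrow{\ \sim\ } (C^0(S_E(\cF), \R), \max(\,, ), +)
\]
sending $E$ to the constant function $\mathbf{1}$. Since $\overline{W}(\cdot, \cdot)$ is defined functorially in the pair $(\cF, E)$ (cf.\ \cite{CC1}, \cite{C}), the pair $(\Theta, \Theta_E = \mathbf{1})$ induces an isometric isomorphism of real Banach algebras
\[
\overline{W}(\cF, E) \ \simeq\ \overline{W}\bigl(C^0(S_E(\cF), \R),\, \mathbf{1}\bigr).
\]
Complexifying, one obtains an isomorphism of commutative complex Banach algebras
\[
\overline{W}(\cF, E) \otimes_{\R} \C \ \simeq\ \overline{W}\bigl(C^0(S_E(\cF), \R),\, \mathbf{1}\bigr) \otimes_{\R} \C,
\]
so their Gelfand spectra coincide.

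Next, I invoke \cite{C}[Prop.~6.13], which identifies the spectrum of $\overline{W}(C^0(K, \R), \mathbf{1}) \otimes \C$ with the compact space $K$ itself for any compact $K$, each point $x \in K$ corresponding to the character induced by evaluation $f \mapsto f(x)$ extended to Witt vectors via the Connes--Consani formulas. Specializing to $K = S_E(\cF)$, the point $\phi \in S_E(\cF)$ corresponds to the character of $\overline{W}(\cF, E) \otimes \C$ obtained by evaluating $\Theta_X$ at $\phi$, i.e.\ sending $X$ to $\phi(X)$ and then extending to Witt vectors. This produces the desired bijection between the spectrum of $\overline{W}(\cF, E) \otimes \C$ and $S_E(\cF)$.

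The main thing to verify is that the identification is compatible with topologies: the Gelfand topology on the spectrum side is, by definition, the weakest topology rendering continuous all evaluations at elements of $\overline{W}(\cF, E) \otimes \C$; on $S_E(\cF)$ the topology $\cT$ is (by Definition \ref{def:topo}) the weakest rendering continuous all $\phi \mapsto \phi(X)$ for $X \in \cF$. Since $\cF$ embeds in $\overline{W}(\cF, E) \otimes \C$ in the Connes--Consani construction and since the dependence of a Witt vector character on its underlying semi-field character is continuous (this is the content of \cite{C}[Prop.~6.13]), the bijection is a homeomorphism of compact spaces. The step most needing care is applying \cite{C}[Prop.~6.13] in the precise form required here; once it is available, the argument reduces to functoriality and Theorem \ref{thm:Tilby}.
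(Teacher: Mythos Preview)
Your proposal is correct and follows essentially the same two-step strategy as the paper: first apply Theorem \ref{thm:Tilby} to reduce to the model case $(C^0(S_E(\cF),\R),\mathbf{1})$, then invoke \cite{C}[Prop.~6.13]. The only cosmetic difference is that the paper reads \cite{C}[Prop.~6.13] as giving an identification of $\overline{W}(\cF,E)$ with $C^0(S_E(\cF),\R)$ itself (with an equivalent, though non-$C^*$, norm), so that the spectrum computation becomes the classical one for $C^0(S_E(\cF),\C)$; you instead phrase it as \cite{C}[Prop.~6.13] directly computing the spectrum of $\overline{W}(C^0(K,\R),\mathbf{1})\otimes\C$ as $K$---but these amount to the same thing, and your added remarks on functoriality and the topology are a welcome bit of extra care.
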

\begin{proof}
The idea is to first apply Theorem \ref{thm:Tilby} and then \cite{C}[Prop.6.13]. Notice that Connes-Consani wrote the second law multiplicatively 
whereas we wrote it additively. So one needs to use the Logarithm to pass from their view point to ours.
This said, one applies \cite{C}[Prop.6.13] with $\rho(x)= e$ (i.e $T(x) \cong 1$). Using our Theorem \ref{thm:Tilby} one then obtains an
identification of $\overline{W}( \cF, E)$ with $C^0 ( S_E(\cF) , \R)$. The Connes-Consani norm on $\overline{W}( \cF, E)$ 
is not $C^*$ but it is equivalent to the usual $C^*$ supremum norm on $C^0 ( S_E(\cF) , \R)$.
The Theorem  follows then easily.  
\end{proof}

\s

The next result refers to Example \ref{ex:K} of Section 2.1 and determines its spectrum.
\begin{corollary} \label{cor:Tilby} Let $K$ be a compact topological space. Then the spectrum $S_{{\bf 1}}( C^0(K , \R))$ is naturally identified 
with $K$. In other words, for any $\phi \in S_{{\bf 1}}( C^0(K , \R))$ there exists a unique $t\in K$ such that
$\forall X \in C^0(K , \R),\, \phi(X)= X(t)$.
\end{corollary}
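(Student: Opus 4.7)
The plan is to construct the natural evaluation map $\iota : K \to S_{\bf 1}(C^0(K,\R))$ defined by $\iota(t) = \phi_t$ where $\phi_t(X) = X(t)$, and to show that $\iota$ is a bijection (hence, by compactness and Hausdorffness, a homeomorphism). Clearly $\phi_t$ is a normalized character: it sends $\oplus$ to $\max$, it is additive, and $\phi_t({\bf 1}) = 1$. Continuity of $\iota$ is immediate from the definition of the topology $\cT$ on $S_{\bf 1}(C^0(K,\R))$, since for each $X \in C^0(K,\R)$ the map $t \mapsto \phi_t(X) = X(t)$ is continuous. Injectivity follows from Urysohn's lemma on the compact Hausdorff space $K$: if $t_1 \neq t_2$, there exists $X \in C^0(K,\R)$ with $X(t_1) \neq X(t_2)$, so $\phi_{t_1} \neq \phi_{t_2}$.

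To prove surjectivity I would apply Theorem \ref{thm:Tilby} to the commutative Banach perfect semi-field $\cF = C^0(K,\R)$ described in Example \ref{ex:K}. This yields an isometric isomorphism
$$\Theta : C^0(K,\R) \longrightarrow C^0(S_{\bf 1}(C^0(K,\R)),\R), \qquad X \mapsto \Theta_X.$$
The key observation is that composing $\Theta_X$ with $\iota$ simply recovers $X$: indeed, for every $t\in K$,
$$\Theta_X(\iota(t)) = \iota(t)(X) = \phi_t(X) = X(t),$$
so $\Theta_X \circ \iota = X$ as elements of $C^0(K,\R)$.

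Suppose, for contradiction, that $\iota(K) \subsetneq S_{\bf 1}(C^0(K,\R))$. Since $\iota$ is continuous and $K$ is compact, $\iota(K)$ is a compact subset of the Hausdorff space $S_{\bf 1}(C^0(K,\R))$ (Hausdorffness holds because $\cT$ is the initial topology for the functions $\phi \mapsto \phi(X)$, which by definition separate points of the spectrum). Applying Urysohn to the compact Hausdorff space $S_{\bf 1}(C^0(K,\R))$, we obtain a nonzero $f \in C^0(S_{\bf 1}(C^0(K,\R)),\R)$ vanishing on $\iota(K)$. By surjectivity of $\Theta$, $f = \Theta_X$ for some $X \in C^0(K,\R)$. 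Then $X = \Theta_X \circ \iota = f \circ \iota = 0$, so $\Theta_X = 0$ by injectivity of $\Theta$, contradicting $f \neq 0$. Hence $\iota$ is surjective.

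The main obstacle is not really technical but conceptual: one must recognize that Theorem \ref{thm:Tilby} essentially does all the work and that the remaining task amounts to inverting the Gelfand-type transform via Urysohn. The only subtle point to verify carefully is Hausdorffness of $S_{\bf 1}(C^0(K,\R))$ together with the compactness of $\iota(K)$, which together enable the Urysohn separation argument producing the contradiction.
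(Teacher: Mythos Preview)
Your proof is correct and follows essentially the same approach as the paper: both define the evaluation map $t\mapsto\phi_t$, observe that $\Theta_X\circ\iota=X$, and then argue by contradiction using Urysohn on the compact Hausdorff space $S_{\bf 1}(C^0(K,\R))$ to produce a nonzero function vanishing on $\iota(K)$, which pulls back via $\Theta$ to the zero function. The paper phrases the contradiction by fixing a specific $\varphi\notin\iota(K)$ and requiring $F(\varphi)=1$, whereas you only ask for $f\neq 0$; these are equivalent.
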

\begin{proof} Using Tietze-Urysohn's Theorem (\cite{Sc}[Page 345]), one checks easily that the topology 
of  $K$ coincides with the weakest topology of $K$ making continuous all the elements of 
$C^0(K , \R)$. Of course, any $t\in K$ defines a normalized character of $( C^0(K , \R), \max ( , ), +)$ 
by the formula $\phi_t: X \mapsto X(t)$. Thus, $K \subset S_{{\bf 1}}( C^0(K , \R))$ and the topology of 
$S_{{\bf 1}}( C^0(K , \R))$ induces on $K$ its original topology. So $K$ is a compact subset of $S_{{\bf 1}}( C^0(K , \R))$.
Suppose, by the contrary, that there exists an element $\varphi \in S_{{\bf 1}}( C^0(K , \R))$ which 
does not belong $K$. By Tietze-Urysohn, there exists $F \in C^0\bigl( S_{{\bf 1}}( C^0(K , \R))\,,\, \R\bigr)$ such that 
$\forall t\in K,\; F(\phi_t)= 0$  and $F(\varphi)=1$. By Theorem \ref{thm:Tilby}, there exists $X \in C^0(K, \R)$ such that 
$\Theta_X = F$:
$$\forall \phi \in S_{{\bf 1}}( C^0(K , \R)) , \; \phi(X) =  \Theta_X(\phi)= F(\phi)\,.
$$ Then, for any $t\in K$, we have $X(t)= \phi_t(X)= \Theta_X(\phi_t)=F(\phi_t)=0\,.$
So $X=0$,  which implies $\Theta_X = F=0$. But this contradicts the fact that $F(\varphi)=1$. The Corollary is proved.
\end{proof}

Using the Riesz representation Theorem and integration theory, one can prove 
the previous corollary along the same idea. We could also prove it by combining Theorem \ref{thm:Connes} and \cite{C}.
But we believe  it is interesting to see how it follows, 
in our context, from Theorem \ref{thm:Tilby}.

\s
Here is a simple example where the topology of $S_E(\cF)$ is not metrizable.
\begin{example} Let $\cF_0$ be the Banach semi-field  of  continuous functions $f: [0,1] \rightarrow \R$ such that
$$
\exists C_f >0,\; \forall t \in [0,1], | f(t) | \leq C_f\, t\,.
$$ Consider the  continuous function $E$ on $[0,1]$ defined by $t \mapsto t= E(t)$.
Then the previous Theorem applies to $\cF_0$ with $S_E(\cF_0)$ equal to the Stone-Cech compactification of 
$]0,1]$.
\end{example}
\s

\s
We now  state several consequences, some of them being motivated by some Banach nonarchimedean theory \cite{KL}[Section 2.3].

\begin{proposition} \label{prop:=} Let $\sim$ be a  non trivial congruence of $\cF$. Then there exists 
$\phi \in S_E ( \cF)$ such that:
$$
\forall X, Y \in \cF,\; ( X \sim Y )\, \Rightarrow \, \phi(X)=\phi(Y)\,.
$$
\end{proposition}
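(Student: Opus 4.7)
The plan is to combine the existence of a maximal congruence extending $\sim$ with the Gelfand--Mazur type Theorem \ref{thm:gm} in order to produce the required normalized character.

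First I would invoke Theorem \ref{thm:max}.1]: since $\sim$ is non-trivial, there exists a maximal congruence $\widetilde{\sim}$ on $\cF$ with $\sim \,\leq\, \widetilde{\sim}$. By Theorem \ref{thm:max}.2], $\widetilde{\sim}$ is automatically closed, so Proposition \ref{prop:w} applies and the quotient $\cF/\widetilde{\sim}$ inherits a structure of complete semi-field (for the quotient $F$-norm) satisfying Assumptions \ref{ass:1} and \ref{ass:2} with $\widetilde{\pi}(E)$ as distinguished element, where $\widetilde{\pi}: \cF \rightarrow \cF/\widetilde{\sim}$ denotes the canonical projection.

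Next, I would apply Theorem \ref{thm:gm} to $\widetilde{\sim}$, which yields an isomorphism of semi-fields
\[
\Psi: (\cF/\widetilde{\sim},\oplus,+) \;\longrightarrow\; (\R,\max(\,,\,),+)
\]
normalized by $\Psi(\widetilde{\pi}(E))=1$. Setting $\phi = \Psi \circ \widetilde{\pi}: \cF \rightarrow \R$, the map $\phi$ is a composition of two semi-field homomorphisms, hence it satisfies the two multiplicative properties of Definition \ref{def:char}, and $\phi(E)=\Psi(\widetilde{\pi}(E))=1$. Moreover $\phi$ is not identically zero because $\widetilde{\sim}$ is not the trivial congruence and $\Psi$ is a bijection. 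Thus $\phi \in S_E(\cF)$.

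Finally, if $X \sim Y$ then, because $\sim \,\leq\, \widetilde{\sim}$, one has $X \,\widetilde{\sim}\, Y$, so $\widetilde{\pi}(X) = \widetilde{\pi}(Y)$ and consequently $\phi(X)=\Psi(\widetilde{\pi}(X))=\Psi(\widetilde{\pi}(Y))=\phi(Y)$, which is the required conclusion. There is in fact no genuine obstacle here: all the work has been done in Theorems \ref{thm:max} and \ref{thm:gm}; the only subtle point is making sure that the hypotheses of Theorem \ref{thm:gm} (completeness for an $F$-norm) are met on the quotient $\cF/\widetilde{\sim}$, which is precisely the content of Proposition \ref{prop:w}.
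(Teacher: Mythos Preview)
Your proof is correct and follows exactly the approach the paper indicates: combine Theorem~\ref{thm:max} (to extend $\sim$ to a maximal congruence) with Theorem~\ref{thm:gm} (to identify the quotient with $\R$), and read off the character as the composite $\Psi\circ\widetilde{\pi}$. One small clarification: the completeness hypothesis for an $F$-norm in Theorem~\ref{thm:gm} is on $\cF$ itself, not on the quotient, so invoking Proposition~\ref{prop:w} is not strictly necessary at this point (it is already used inside the proof of Theorem~\ref{thm:gm}).
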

\begin{proof} This is an easy consequence from Theorems \ref{thm:max} and \ref{thm:gm}. 
\end{proof}

\s
\begin{definition} Let $X$ be an  element of $ \cF$.
\item 1] $X$  is said to be regular if  there does not exist any non trivial congruence $\sim$ for which 
$0 \sim X$.
\item 2] $X$ is said to be absorbing if for any $Z \in \cF$ there exists $n\in \N$ such that: $ Z \leq n X$.

\end{definition}
The next Proposition gives a characterization of some algebraic notions internal to $\cF$
in terms of the normalized characters. Its proof uses Theorem \ref{thm:Tilby} and is left to the reader.
\begin{proposition} \label{prop:cons} Let $X \in \cF$. Then the following are true.
\item 1] $0 \leq X$ if and only if $\forall \phi \in S_E ( \cF)$, $0\leq \phi(X)$.
\item 2] $X$ is regular if and only if $\forall \phi \in S_E ( \cF)$, $\phi(X)\not=0$.
\item 3] $X$ is absorbing if and only if $\exists \epsilon >0$, $\forall \phi \in S_E ( \cF)$, $ \epsilon < \phi(X)$.
\end{proposition}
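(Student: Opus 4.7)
The plan is to pull each statement back to $C^0(S_E(\cF), \R)$ via the isomorphism $\Theta$ of Theorem \ref{thm:Tilby}. The key preliminary observation is that $\Theta$ preserves the partial order: since $X \leq Y$ means $X \oplus Y = Y$ and $\Theta$ is a semi-field homomorphism sending $\oplus$ to $\max$, this condition is equivalent to $\max(\Theta_X, \Theta_Y) = \Theta_Y$ pointwise on $S_E(\cF)$, i.e.\ to $\phi(X) \leq \phi(Y)$ for every $\phi \in S_E(\cF)$. Combined with $\Theta_0 \equiv 0$ and $\Theta_E \equiv 1$, part 1] is immediate from this order-preservation applied to the pair $(0, X)$.

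For part 2], in the forward direction I would assume, aiming for a contradiction, that $\phi_0(X) = 0$ for some $\phi_0 \in S_E(\cF)$ and define $Y \sim Z \iff \phi_0(Y) = \phi_0(Z)$. A direct check using Definition \ref{def:char} (and the fact that $\phi_0(-Y) = -\phi_0(Y)$) shows that $\sim$ is a congruence on $\cF$; it is non-trivial because $\phi_0(E) = 1 \neq 0 = \phi_0(0)$. Since $0 \sim X$, this contradicts the regularity of $X$. Conversely, if $X$ is not regular then some non-trivial congruence identifies $0$ with $X$, and Proposition \ref{prop:=} produces a $\phi \in S_E(\cF)$ with $\phi(X) = \phi(0) = 0$.

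For part 3], the forward direction is easy: if $E \leq nX$ then Lemma \ref{lem:char}.1] gives $1 = \phi(E) \leq n\phi(X)$ for every $\phi$, so $\epsilon := 1/(2n)$ works. For the converse, given $Z \in \cF$, I would choose $n \in \N^*$ with $n\epsilon > r(Z)$. For every $\phi \in S_E(\cF)$, Lemma \ref{lem:char}.1] yields $\phi(Z) \leq r(Z) < n\epsilon < n\phi(X)$, hence $\phi(Z) \leq n\phi(X)$ uniformly on $S_E(\cF)$; by the order-preservation observation, $Z \leq nX$ in $\cF$, so $X$ is absorbing. The main obstacle in this part is ensuring that a single $n$ works for all $\phi$ simultaneously, and this is exactly where the \emph{strict} inequality $\epsilon < \phi(X)$ (uniform over the compact spectrum) in the hypothesis becomes essential; without it, one could only get $\phi(Z) \leq n\phi(X)$ asymptotically, which would not translate into $Z \leq nX$ in $\cF$.
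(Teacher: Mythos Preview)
Your proof is correct and follows the approach the paper intends: the paper leaves this proposition to the reader with the remark that it uses Theorem \ref{thm:Tilby}, and your argument does exactly that, pulling everything back to $C^0(S_E(\cF),\R)$ via the order-preserving isomorphism $\Theta$ for parts 1] and 3], and invoking Proposition \ref{prop:=} (together with the kernel congruence of a character) for part 2]. Each step checks out, including the careful handling of the strict uniform lower bound in part 3].
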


\s Now we apply Corollary \ref{cor:Tilby} to the determination of the spectrum of an  interesting geometric example, which is in fact at the origin of our interest for this topic.
\begin{theorem} \label{thm:conv} Let $F$ be a finite dimensional real vector space and denote by $\cC$ the set of all the compact convex subsets of $F$ which contains $0$. 
Denote by ${\rm conv} ( A \cup B)$ the convex hull of $A \cup B$ for  $A, B \in \cC$.
Consider a map $\phi: \cC \rightarrow \R$ such that:
$$
\forall A, B \in \cC, \;\phi (A+B)= \phi(A) + \phi(B),\; \phi ({\rm conv}\, ( A \cup B) )= \max ( \phi(A), \phi(B))\,,
$$ where  $+$ denotes the Minkowski sum. Then there exists  a linear form $\psi \in F^*$ such that
$$
\forall A \in \cC,\; \phi(A) = \max_{t \in A} \psi(t)\, = l_A(\psi)\,.
$$ Recall that $l_A(\psi)$ is the value at $\psi$ of the so called support function of $A$.
 Since $0\in A$, $l_A (\psi)$ is automatically $\geq 0$.
\end{theorem}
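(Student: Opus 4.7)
The plan is to reduce the statement to Corollary \ref{cor:Tilby} via the classical support-function embedding of convex bodies.

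Let $K$ denote the Euclidean unit sphere in $F^*$ and $E$ the closed Euclidean unit ball of $F$. For $A\in\cC$, write $h_A(\psi)=\max_{v\in A}\psi(v)$ for $\psi\in K$. The classical identities $h_{A+B}=h_A+h_B$, $h_{{\rm conv}(A\cup B)}=\max(h_A,h_B)$ and the Hahn--Banach injectivity of $A\mapsto h_A$ make $h$ an injective homomorphism from $(\cC,{\rm conv}(\cdot\cup\cdot),+)$ into the Banach semi-field $(C^0(K,\R),\max,+)$, with $h_E\equiv 1$. Since $(\cC,+)$ is cancellative (Minkowski cancellation for compact convex bodies) and $A\mapsto nA$ admits $A\mapsto (1/n)A$ as an inverse, $\cC$ is a cancellative perfect semi-ring, and its Grothendieck group $\cF:=G(\cC)$ inherits, by distributivity, the structure of a commutative perfect semi-field of characteristic $1$. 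The map $\phi$ extends to a character $\tilde\phi:\cF\to\R$ by $\tilde\phi(A-B)=\phi(A)-\phi(B)$, and $h$ extends to an injective homomorphism $\cF\hookrightarrow C^0(K,\R)$.

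Next I would verify Assumptions \ref{ass:1} and \ref{ass:2} for $(\cF,E)$. Monotonicity of the support function shows that $X\leq tE$ in $\cF$ is equivalent to $h_X\leq t$ on $K$, so the norm $r$ of Assumption \ref{ass:1} satisfies $r(X)=\|h_X\|_\infty$, and Assumption \ref{ass:2} follows from the injectivity of $h$. Lemma \ref{lem:char}.1] therefore yields the Lipschitz estimate $|\tilde\phi(X)|\leq\|h_X\|_\infty$ on $\cF$. Also, $h(\cF)\subset C^0(K,\R)$ contains the constants (multiples of $h_E=1$) and every restriction to $K$ of a linear form on $F^*$ (take $h_{[0,v]}-h_{[0,-v]}$), and it is closed under $\max$ and $\min$; by the lattice form of the Stone--Weierstrass theorem, $h(\cF)$ is dense in $C^0(K,\R)$.

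If $\phi\equiv 0$ the conclusion holds with $\psi=0$. Otherwise, for every $A\in\cC$ one has $0\leq A\leq tE$ for some rational $t>0$, whence $0\leq\phi(A)\leq t\phi(E)$; nontriviality of $\phi$ forces $\phi(E)>0$. The Lipschitz estimate then lets $\tilde\phi/\phi(E)$ be extended by continuity from $h(\cF)$ to a map $\Phi:C^0(K,\R)\to\R$, and sup-norm continuity of both $+$ and $\max$ shows $\Phi$ is a normalized character of $(C^0(K,\R),\max,+)$. By Corollary \ref{cor:Tilby} there is a unique $\psi_0\in K$ with $\Phi(f)=f(\psi_0)$, hence $\phi(A)=\phi(E)\,h_A(\psi_0)=h_A(\phi(E)\psi_0)$ for all $A\in\cC$. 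Setting $\psi:=\phi(E)\psi_0\in F^*$ yields $\phi(A)=\max_{v\in A}\psi(v)$ by positive homogeneity of the support function, as required.

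The main obstacle is securing the two inputs needed for the reduction: the density of $h(\cF)$ in $C^0(K,\R)$ (a vector-lattice Stone--Weierstrass argument using that $h(\cF)$ separates points and contains the constants) and the sup-norm Lipschitz continuity of $\tilde\phi$ on $h(\cF)$ (which is precisely Lemma \ref{lem:char}.1] once $r$ is identified with $\|\cdot\|_\infty$). Once these ingredients are in hand, Corollary \ref{cor:Tilby} delivers the support-function representation of $\phi$ at once.
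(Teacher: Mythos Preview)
Your proof is correct and follows essentially the same route as the paper: embed $\cC$ into $C^0(K,\R)$ via support functions, pass to the semi-field of fractions, establish the sup-norm bound on the character, use a lattice Stone--Weierstrass argument for density, extend to $C^0(K,\R)$, and finish with Corollary~\ref{cor:Tilby}. One small presentational point: you invoke Lemma~\ref{lem:char}.1] before normalizing, but that lemma is stated for $\phi\in S_E(\cF)$; either normalize first (as the paper does) or note directly that $-r(X)E\leq X\leq r(X)E$ gives $|\tilde\phi(X)|\leq r(X)\,\tilde\phi(E)$, which is what you actually use when you divide by $\phi(E)$.
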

\begin{proof} We can assume that $\phi$ is not identically zero. Since every element $B$ of $\cC$ contains $0$, one observes that $\phi ( \{0\})=0$ 
and that: 
$$
\phi ({\rm conv}\, ( \{0\} \cup B) )= \phi(B)= \max ( \phi( \{0\}) , \phi(B))\,.
$$ Therefore,  $\phi (B) \geq 0$.
Fix an euclidean norm $N$ on $F$ with scalar product $< , >$ and closed unit 
ball $E$. 
At the expense of replacing $\phi$
by $\lambda \phi$ for a suitable $\lambda >0$ we can assume that $\phi(E)=1$. 
Next, recall that $l_E(\psi)$ is  the operator norm (associated to $N$)
of the linear form $\psi \in F^*$.  Consider the   sphere of $F^*$:
$$
S_E = \{ \psi \in F^*/\; l_E(\psi)= 1\}\,.
$$

Observe that $(\cC, \oplus= {\rm conv}\, (   \cup  ), +)$ is a semi-ring and that for each $A\in \cC$ 
we can find a smallest real $t \geq 0$ such that $A \leq t E$ (i.e $A \subset t E$). Call it $r(A)$, then 
$r(A)=0$ iff $A=\{0\}$. Explicitly,  $r(A)$ is the greatest euclidean norm $ N(v)$ for $v \in A$.

\begin{lemma} For any $(\lambda, A) \in \R^+ \times \cC$ one has: $\phi( \lambda A) = \lambda \phi(A)$ and
$\phi (A) \leq r(A)$.

\end{lemma}
\begin{proof} Set $\psi( \lambda) = \phi( \lambda A)$ for $\lambda \in \R^+$. One observes that $\psi$ defines an additive non decreasing map
from  $\R^+$ to $\R^+$. Therefore, for any $\lambda \geq 0$, $\psi(\lambda)= \lambda \psi(1)$. This proves the first equality. 

Next consider $A \in \cC$, clearly $A \oplus r(A) E= r(A) E$. By applying $\phi$ one obtains:
$$\phi(A \oplus r(A) E) = \max( \phi(A), \phi(r(A) E)= \phi(r(A) E) \,.$$
 But by the first part, $\phi(r(A) E) = r(A) \phi(E)= r(A)$.
This proves the Lemma.

\end{proof}
Next denote by $\cR$  the set of convex functions $g$ on $F^*$ with values 
in $[0, + \infty[$ such that $g(\lambda \psi) = \lambda g(\psi)$ for any $(\lambda,\psi)\in \R^+ \times F^*$.
Endow $\cR$ with the following norm:
$$
\forall g \in \cR,\;\; \| g \|= \sup_{\psi \in S_E} | g(\psi) |\,.
$$

\begin{lemma} \item 1]
The support function $A \mapsto l_A$ 
defines an isomorphism of semi-rings from $(\cC, \oplus, +)$ onto 
$(\cR, \max( , ), +)$.  

\item 2] $\forall A \in \cC,\; r(A)\,=\, \|  l_A\| \,.$
 
\end{lemma}
\begin{proof} 1]  is a well known consequence of  Hahn-Banach.

2] The euclidean scalar product $< , >$ of $F$, fixed above, allows to identify $S_E$ with the euclidean sphere $S$ of 
$F$. Using these identifications, one finds that for any $A \in \cC$:
$$\|  l_A\| = \sup_{u \in S}\, | \max_{a \in A} < u , a>  | \,=\,\sup_{u \in S, \, a \in A} <u , a > \,.
$$ By Cauchy-Schwartz (and $S=-S$), $\|  l_A\|$ appears now to be the greatest euclidean norm $N(a)$ of the elements of $A$.
But this is precisely $r(A)$. The Lemma is proved.
\end{proof}

Thus the character $\phi$  of $\cC$ induces a character, still denoted $\phi$, on $\cR$; with this identification 
one has $ \phi(A) = \phi(l_A)$ for $A \in \cC$.  Notice, by definition, 
that the restriction of $l_E$ to the sphere $S_E$ is the constant function ${\bf 1}$ and that $\phi(E)=\phi(l_E)=1$.
The two previous Lemmas then imply:
$$
\forall g \in \cR,\;\;  | \phi(g) | \leq \| g \|\,.
$$ In the rest of this proof we identify,  via the map $ g \mapsto g_{| S_E} $, $\cR$ with a sub semi-ring 
of $C^0(S_E ; \R)$.
Since $\cR$ is clearly cancellative, we can consider its  semi-field of fractions  $(\cG_0, \max ( , ), +) $. It has  the structure of a real vector space and is given by: 
$$\cG_0= \{g_0 -g_1\,/\,  g_0,g_1 \in \cR\} \subset C^0( S_E , \R) \,.$$

\begin{lemma} By setting $\phi (g_0 -g_1)= \phi(g_0) -\phi(g_1)$ for any $g_0,g_1 \in \cR$,  one defines in 
an intrinsic way a character 
  still denoted $\phi$, of  $(\cG_0, \max( , ), +) $, which is $\R-$linear. It satisfies:
\begin{equation} \label{eq:conv}
\forall g \in \cG_0,\;\;  | \phi(g) | \leq \| g \| \, = \max_{\psi \in S_E} | g(\psi) |\,.
\end{equation}
\end{lemma}
\begin{proof} The (intrinsic) extension of $\phi$ as a $\R-$linear character to $\cG_0$ is left to the reader. One has:
$$
\forall g \in \cG_0 , \; - \| g \| {\bf 1} \leq g \leq \| g \| {\bf 1}\,.
$$ Since under the previous identifications $\phi ( {\bf 1}) =1$, one obtains \eqref{eq:conv} by applying 
the character $\phi$ to the previous inequality in $\cG_0$.

\end{proof} 
 Observe that the semi-field $\cG_0$ is not complete for the norm $\| g \|= \max_{\psi \in S_E} | g(\psi) |$. Next, consider 
 two different points $\psi_1, \psi_2$ of $S_E$ and denote by $u_1$ the (unitary) vector of $F$ such that 
 $< u_1 , z >\, =\, \psi_1(z) $ for all $z \in F$.
 Set  $A_1 = [0 , u_1] \subset F$, it is  clear that 
 $l_{A_1} (\psi_1) \not=l_{A_1} (\psi_2)$.
Then the arguments of the end of the proof of Theorem \ref{thm:Tilby} allow to show that $\cG_0$ is dense in 
$C^0( S_E, \R) $. Then inequality \eqref{eq:conv} implies that $\phi$ can be extended to a
 character, still denoted $\phi$,  of $(C^0( S_E, \R) , \max ( , ),  +)$.
 By Corollary \ref{cor:Tilby} there exists $\psi \in S_E$ such that for any $g \in C^0( S_E, \R)$,
 $\phi (g)= g(\psi)$. Going back to $\cC$ we obtain, using the previous identifications:
 $$
 \forall A \in \cC,\; \phi (A)= \phi(l_A)= l_A(\psi)= \max_{v \in A} \psi(v)\,.
 $$  Theorem \ref{thm:conv} is proved.
\end{proof}

\m
\section{Determination of the closed congruences of $(\cF, \oplus, +)$.} $\;$

\s In this Section we fix a Banach semi-field $\cF$ (see Definition \ref{def:dist}) and  a closed congruence $\sim$ on $\cF$.  Consider 
then the natural projection 
 $\pi: \cF \rightarrow \cF/\sim$  and set $E_1 = \pi(E)$. Our goal is to provide a geometric description 
of $\sim$, but since we have only an inequality in Proposition \ref{prop:w}.3], we have to be careful.

\s
 Recall that  $S_{E_1}( \cF/\sim)$ denotes the set of characters of $ (\cF/\sim, \oplus , +) $
satisfying $\phi(E_1)=1$. By Theorem \ref{thm:comp},   $S_{E_1}( \cF/\sim)$ is compact for the topology $\cT$, the weakest one making continuous 
all the maps $\phi \mapsto \phi(X_1)$ where $X_1$ runs over $\cF/ \sim$.
The   next Proposition 
identifies $S_{E_1}( \cF/\sim)$ with a compact subset of $(S_{E}( \cF) , \cT)$.
\begin{proposition} \label{prop:j}
 The map $\phi \mapsto \phi \circ \pi$ defines an injective map 
$j: S_{E_1}( \cF/\sim) \rightarrow S_{E}( \cF)$. Endow $j(S_{E_1}( \cF/\sim) )$ with the induced topology of $(S_{E}( \cF) , \cT)$.
Then $j$ induces  a homeomorphism from 
$ S_{E_1}( \cF/\sim)$ onto $j(S_{E_1}( \cF/\sim) )$.  Thus we shall view freely $ S_{E_1}( \cF/\sim)$ 
as a compact subspace of  $S_{E}( \cF)$.

\end{proposition}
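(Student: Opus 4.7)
The plan is to verify four things in order: that $j$ is well-defined (i.e.\ that $\phi\circ\pi$ is indeed a normalized character of $\cF$), that it is injective, that it is continuous, and finally that it is a homeomorphism onto its image; the last point will come for free from a compactness argument.

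First I would check well-definedness. By Proposition \ref{prop:alg}.1], $\pi:\cF\to\cF/\sim$ is a homomorphism of semi-fields, so for any $\phi\in S_{E_1}(\cF/\sim)$ and any $X,Y\in\cF$ one has $(\phi\circ\pi)(X\oplus Y)=\phi(\pi(X)\oplus\pi(Y))=\max(\phi(\pi(X)),\phi(\pi(Y)))$ and $(\phi\circ\pi)(X+Y)=\phi(\pi(X))+\phi(\pi(Y))$; moreover $(\phi\circ\pi)(E)=\phi(E_1)=1$. So $\phi\circ\pi\in S_E(\cF)$. Injectivity of $j$ is immediate: $\pi$ is surjective onto $\cF/\sim$, and two characters of $\cF/\sim$ that agree on every $\pi(X)$ agree everywhere.

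Next I would establish continuity of $j$. By Definition \ref{def:topo} it suffices to show that for every $X\in\cF$ the map $\phi\mapsto (\phi\circ\pi)(X)=\phi(\pi(X))$ is continuous on $S_{E_1}(\cF/\sim)$. But this is precisely the evaluation at the element $\pi(X)\in\cF/\sim$, which is continuous by the very definition of the topology on $S_{E_1}(\cF/\sim)$.

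Finally, I would promote $j$ to a homeomorphism onto its image. By Theorem \ref{thm:comp} applied to $\cF/\sim$, the source $S_{E_1}(\cF/\sim)$ is compact; the target $S_E(\cF)$ is Hausdorff since the topology $\cT$ is the initial topology for a family of real-valued maps that, by definition, separates points. Thus $j$ is a continuous injection from a compact space to a Hausdorff space, hence closed, hence a homeomorphism onto its image, and the image $j(S_{E_1}(\cF/\sim))$ is a closed (therefore compact) subset of $(S_E(\cF),\cT)$. I do not expect any real obstacle here: the only point requiring a moment of thought is that separation of points in $S_E(\cF)$ (needed for the Hausdorff property) follows directly from Definition \ref{def:topo}, and that the compactness of the source and Hausdorffness of the target together make the closed-map property automatic without any further work.
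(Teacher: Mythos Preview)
Your argument is correct and essentially matches the paper's (very brief) proof, which simply invokes the definition of $\cT$ and the surjectivity of $\pi$. The only slight difference is in the last step: you obtain the homeomorphism via the ``continuous bijection from compact to Hausdorff'' trick, whereas the paper's phrasing suggests the more direct observation that, since $\pi$ is surjective, the defining subbasic maps $\phi\mapsto\phi(X_1)$ on $S_{E_1}(\cF/\sim)$ are exactly the maps $\phi\mapsto j(\phi)(X)$ for $X\in\cF$, so $j$ transports the topology $\cT$ on $S_{E_1}(\cF/\sim)$ to the subspace topology on $j(S_{E_1}(\cF/\sim))$ by definition. Both routes are standard; yours has the mild advantage of making explicit where compactness and Hausdorffness enter, while the paper's avoids invoking Theorem~\ref{thm:comp} for the quotient.
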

\begin{proof} 
This is an easy consequence of the definition of the topology $\cT$ (Definition \ref{def:topo})  and of the fact that 
$\pi: \cF \rightarrow \cF/\sim$ is a surjective homomorphism of semi-fields.
\end{proof}

\s
The  following Theorem  classifies all the closed congruences on $\cF$ and is the main result of this Section.
\begin{theorem} \label{thm:Tilby'} Set $K_1= S_{E_1}( \cF/\sim)$. The following are true.
\item 1]
$$\forall X,Y \in \cF,\; X \sim Y \Leftrightarrow (\Theta_X)_{|_{j(K_1)}}= (\Theta_Y)_{|_{j(K_1)}}\,,
$$ where the map $\Theta$ is defined in Theorem \ref{thm:Tilby}. 

\item 2] The map
$$
\Theta^1: (\cF/\sim\,, \,\oplus, +) \rightarrow (C^0(K_1 , \R) ,\, \max ( , ) , \,+)
$$
$$
X_1 \mapsto \Theta^1_{X_1}: \; \phi \mapsto \phi(X_1)= \Theta^1_{X_1} (\phi)
$$
defines an isometric isomorphism of Banach semi-fields:
$$
\forall X_1 \in \cF/\sim\,, \; \;\inf_{X \in \cF,\, \pi (X)= X_1}\, r(X)\,= \max_{\phi \in K_1} | \Theta^1_{X_1}(\phi) |\,
 = \,r_\sim (X_1) \,. 
$$
\item 3] Lastly, 
$$
\forall \phi \in K_1 , \; \Theta_{\pi(X)}^1 ( \phi) = \Theta_X (\phi \circ \pi)\,.
$$ In other words, under the identification of Proposition \ref{prop:j}, $\Theta_{\pi(X)}^1$ is the restriction of $\Theta_X$ to $j(K_1)$.
\end{theorem}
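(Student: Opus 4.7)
The plan is to reduce everything to applying Theorem \ref{thm:Tilby} to the quotient $\cF/\sim$, once we have verified that it is itself a Banach semi-field for the norm $r_\sim$ associated to $E_1=\pi(E)$. The organization is: (3) is immediate from the definitions, the easy halves of (1) and (2) follow, and the hard point is to identify $r_\sim$ with the quotient norm inherited from $\cF$.

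\textbf{Part 3 and the easy half of Part 1.} For $\phi\in K_1$, identified with $\phi\circ\pi\in j(K_1)\subset S_E(\cF)$, the chain
$\Theta^1_{\pi(X)}(\phi)=\phi(\pi(X))=(\phi\circ\pi)(X)=\Theta_X(\phi\circ\pi)=\Theta_X(j(\phi))$
proves Part 3 and shows that $X\sim Y$ (which gives $\pi(X)=\pi(Y)$) implies the coincidence of $\Theta_X$ and $\Theta_Y$ on $j(K_1)$.

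\textbf{Key step: identification of $r_\sim$ with the quotient norm.} I will prove
$$r_\sim(\pi(X_0))\;=\;\inf_{X\sim X_0}\,r(X).$$
The inequality $\leq$ follows by applying $\pi$ to $-r(X)E\leq X\leq r(X)E$ (Lemma \ref{lem:spec}.2]), since $\pi$ preserves the partial order. For $\geq$, pick $t>r_\sim(\pi(X_0))$: by Proposition \ref{prop:alg}.3] there exist $H_1,H_2\in\pi(0)$ with $-tE-H_1\leq X_0\leq tE+H_2$. Using the minimum $A\wedge B:=-((-A)\oplus(-B))$ provided by Lemma \ref{lem:oplus}.4], I set
$$X\;:=\;\bigl(X_0\oplus(-tE)\bigr)\wedge(tE).$$
By construction $-tE\leq X\leq tE$, so $r(X)\leq t$. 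A direct computation using distributivity of $+$ over $\oplus$ and $\wedge$ gives
$$X-X_0\;=\;\bigl(0\oplus(-tE-X_0)\bigr)\wedge(tE-X_0).$$
The inequalities above yield $-tE-X_0\leq H_1$ and $tE-X_0\geq -H_2$, whence
$$-(0\oplus H_2)\;\leq\;X-X_0\;\leq\;0\oplus H_1,$$
with both bounds lying in the sub semi-field $\pi(0)$. By the absorption property of Lemma \ref{lem:cong}.3], $X-X_0\in\pi(0)$, i.e.\ $X\sim X_0$. Hence $\inf_{Y\sim X_0} r(Y)\leq t$, and letting $t\searrow r_\sim(\pi(X_0))$ gives the reverse inequality.

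\textbf{Conclusion.} Since $\pi(0)$ is closed in the real Banach space $(\cF,r)$ (Lemma \ref{prop:rel}), the quotient norm is complete, so by the previous step $(\cF/\sim,r_\sim)$ is complete; together with Propositions \ref{prop:w}.1] and \ref{prop:w}.2] this makes $(\cF/\sim,\oplus,+)$ a Banach perfect semi-field of characteristic $1$ in the sense of Definition \ref{def:dist}, with distinguished element $E_1=\pi(E)$. Applying Theorem \ref{thm:Tilby} to $\cF/\sim$ yields directly the isometric isomorphism $\Theta^1$ of Part 2 and the equality $r_\sim(X_1)=\max_{\phi\in K_1}|\Theta^1_{X_1}(\phi)|$; combined with the key step this gives the full triple equality in Part 2. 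For the remaining half of Part 1, if $\Theta_X$ and $\Theta_Y$ agree on $j(K_1)$, then by Part 3 we have $\Theta^1_{\pi(X)}=\Theta^1_{\pi(Y)}$ on $K_1$, and injectivity of $\Theta^1$ forces $\pi(X)=\pi(Y)$, i.e.\ $X\sim Y$.

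\textbf{Main obstacle.} The only genuine difficulty is the key step: proving $r_\sim$ coincides with the quotient norm. The natural definition of $r_\sim$ is purely order-theoretic (via $\pi(E)$), while the quotient norm is analytic, and Proposition \ref{prop:w}.3] only gives one inequality. The trick is the truncation construction above, which combines the $\max$/$\min$ lattice structure with the absorption property Lemma \ref{lem:cong}.3] specific to congruences in characteristic $1$; this property plays the role, for congruences, that closure under multiplication by arbitrary elements plays for ideals in classical Banach algebra theory.
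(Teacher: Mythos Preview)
Your proof is correct, and it follows a genuinely different route from the paper's.

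The paper does \emph{not} first prove that $r_\sim$ equals the quotient norm. Instead it only uses that $\cF/\sim$ is complete for the $F$-norm $\|\cdot\|_1$ (Proposition~\ref{prop:w}), hence knows only that $\Theta^1$ is an injective homomorphism with dense range. To obtain surjectivity and isometry it invokes Urysohn's extension theorem on the pair $j(K_1)\subset S_E(\cF)$: given $F\in C^0(K_1,\R)$, extend $F\circ j^{-1}$ to $G\in C^0(S_E(\cF),\R)$ with the same sup norm, write $G=\Theta_X$ via Theorem~\ref{thm:Tilby} for $\cF$, and check $\Theta^1_{\pi(X)}=F$ with $r_\sim(\pi(X))=r(X)$. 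The equality $r_\sim(X_1)=\inf_{\pi(X)=X_1}r(X)$ then drops out as a \emph{consequence}.

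You invert this logic: the lattice truncation $X=(X_0\oplus(-tE))\wedge(tE)$, combined with the absorption property of Lemma~\ref{lem:cong}.3], gives the equality $r_\sim=\|\cdot\|_1$ directly, so $\cF/\sim$ is Banach for $r_\sim$ and Theorem~\ref{thm:Tilby} applies to it outright. Your argument is more elementary (no Urysohn, no appeal to the $F$-norm apparatus of Definition~\ref{def:norm}), and it makes transparent that the key norm identity is a purely order-theoretic fact about congruences, independent of any Gelfand-type representation. The paper's approach, on the other hand, exhibits the identity as a manifestation of the classical Gelfand--Urysohn pattern and shows how the $F$-norm framework fits into the picture.
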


\begin{proof} We begin with preliminary observations.
By  Proposition \ref{prop:w}, applied with $\cF$  endowed with the complete $F-$norm $r(X)$, $\cF/\sim$ is complete for the $F-$norm $\| \, \|_1$ and $r_\sim$ is a norm.
Define $\Theta^1_{X_1}$ as in Part 2] of the Theorem.

One obtains the  inequality: 
\begin{equation} \label{eq:prel}
\forall X_1 \in \cF/\sim\;, \;  \max_{\phi \in K_1} | \Theta^1_{X_1}(\phi) |\,
 = \,r_\sim (X_1) \; \leq \inf_{Y \in \cF,\, \pi (Y)= X_1}\, r(Y) \, = \|  X_1 \|_1, 
 \end{equation} by the following arguments.
 First, apply the inequality of Proposition \ref{prop:w}.3]. Second, apply Lemma \ref{lem:char}.1] and Theorem \ref{thm:char} 
 to $\cF/\sim$ instead of $\cF$, and then \eqref{eq:prel} follows immediately.
 
Let us now prove 1]. It suffices to prove the result for $Y=0$.
Let $X \in \cF$ then Theorem \ref{thm:char}, applied to $\cF / \sim$ and $\pi(X)$, shows that 
$\pi(X)=0$ (or $X \sim 0$)  if and only if $\phi\circ \pi(X)=0$ for all $\phi \in K_1$. 
But $\phi\circ \pi(X)=0$ means precisely that $\Theta_X (\phi \circ \pi) = \Theta_X( j(\phi))=0$ where $\Theta_X$ is defined in Theorem \ref{thm:Tilby}. By Proposition \ref{prop:j} 
which explains the way $K_1$ is embedded in $S_E( \cF)$, 
we then conclude that  
$\pi(X)=0$ if and only if $\Theta_X$ vanishes on $j(K_1)$. 
The result is proved.

Let us prove 2]. By the inequality \eqref{eq:prel} and the arguments of the proof of Theorem \ref{thm:Tilby}, the
the map:
$$
\Theta^1: (\cF/\sim\,, \,\oplus, +) \rightarrow (C^0(K_1 , \R) ,\, \max ( , ) , \,+)
$$
$$
X_1 \mapsto \Theta^1_{X_1}: \; \phi \mapsto \phi(X_1)= \Theta^1_{X_1} (\phi)
$$
defines a continuous injective  homomorphism of  semi-fields
with dense range in $C^0(K_1 , \R)$. 
Let us now show that $\Theta^1$ is both surjective and isometric.

Consider $F \in C^0(K_1 , \R)$; the homeomorphism 
$j:K_1 \rightarrow j(K_1)$ of Proposition \ref{prop:j} allows to consider $F\circ j^{-1}  \in C^0(j(K_1) , \R)$. By Urysohn (\cite{Sc}[page 347]) applied to $j(K_1) \subset S_E(\cF)$, there exists $G \in C^0( S_E(\cF), \R)$ such that:
\begin{equation} \label{eq:u}
G_{| j(K_1)}= F \circ j^{-1},\; \;  {\rm and} \; \; \max_{v \in j(K_1)} | F\circ j^{-1}(v) | \,=\, \max_{w \in S_E(\cF)} | G(w) | \,.
\end{equation} By Theorem \ref{thm:Tilby},  $G= \Theta_X$ for a suitable $X \in \cF$, and
 $r(X)= \max_{ w\in S_E(\cF)} | G(w) |$.   Thus we obtain:  $(\Theta_X)_{| j(K_1)}= F\circ j^{-1}$. By Proposition \ref{prop:j}, 
  this means that for any character $\phi \in K_1$ one has, with $j(\phi)= \phi \circ \pi$:
 $$\phi \circ \pi(X)\,=\, \Theta_X( \phi \circ \pi)\, = \,F\circ j^{-1} ( \phi \circ \pi)= F(\phi) \,.$$
 In other words, we obtain $F= \Theta_{\pi(X)}^1$. Moreover, Lemma \ref{lem:char}.1] and Theorem \ref{thm:char}, applied 
 to $\pi(X) \in \cF/\sim$, show that 
 $r_\sim (\pi(X))= \displaystyle \max_{ \phi \in K_1} | F(\phi) |$. Using \eqref{eq:u}   we then conclude that:
  $$
  r_\sim (\pi(X)) = \max_{ \phi \in K_1} | F(\phi) | \, = \, \max_{w \in S_E(\cF)} | G(w) | \,=\,
  r(X) \,.
  $$ But, by  
   \eqref{eq:prel} one has:

  $$
 r_\sim (\pi(X)) \, \leq \, \inf_{Y\in \cF, \, \pi(Y)= \pi(X)} r(Y) \, \leq r(X)\,.
 $$ Therefore, the two previous inequalities are in fact equalities.
 Hence, $\Theta^1$ is an isometry and moreover we have just seen that it is surjective.
 Lastly, 3] now becomes obvious. The Theorem \ref{thm:Tilby'} is proved.
\end{proof}

Therefore, the inequality of Proposition \ref{prop:w}.3] is actually an equality !.

\m
\section{About commutative perfect cancellative semirings of characteristic $1$.} $\;$

\s

\subsection{Cancellative semi-rings and congruences.} $\;$

\m
In this Section, $\cR$ will denote a cancellative semi-ring as in the following general definition.
We shall apply the tools constructed in the previous sections to the study of $(\cR, \oplus, +)$.
\begin{definition} \label{def:gen}
A  commutative  semi-ring of characteristic $1$ 
is  a triple $(\cR, \oplus, +)$ where $\cR$ is a set endowed with two associative laws $\oplus, +$  such that 
 the following two conditions are satisfied: 
 \item 1] $(\cR, +)$ is an abelian monoid (with identity element $0$).
 
\item 2] $(\cR, \oplus)$ is a commutative  monoid such that 
$$\forall X \in \cR, \; X \oplus X= X, \; \; {\rm and},
$$ 
$$
\forall X,Y, Z \in \cR,\; X+ (Y\oplus Z)= (X+Y) \oplus (X+Z)\,.
$$
\item 3] $(\cR, \oplus, +)$ is said to be cancellative if 
$$
\forall X,Y, Z\in \cR,\; \,X+Y=X+Z \Rightarrow Y=Z\,.
$$
\item 4] $(\cR, \oplus, +)$ is said to be perfect if for any $n \in \N^*$, the map $\theta_n:\, X \mapsto n X$ is surjective from $\cR$ onto $\cR$.
\end{definition}
\s
We recall the partial order $\leq $ on $\cR$ defined by
$X\leq Y$ if $X\oplus Y=Y$.

\s
Let us first give two concrete examples.

\begin{example} Denote by $\cR_0$ the set of piecewise affine convex functions from $[0,1]$ to $ \R$ 
sending $[0,1] \cap \Q$ into $\Q$. They are of the form:
$$
v \mapsto \max ( a_1  v +b_1, \ldots , a_n v +b_n) ,\; {\rm where\; all\;  the} \; a_j\,,  b_j \in \Q\,.
$$
 Then $(\cR_0, \max ( ,), +)$ is a commutative cancellative perfect 
semi-ring.  
\end{example}
\s 
\begin{example} \label{ex:nf} Let $K$ a number field, 
consider the canonical embedding:
$$ \sigma (x) = (\sigma_1 (x), \ldots,\sigma_{r_1}(x), \dots , \sigma_{r_1 + r_2}(x) ) \in  \R^{r_1} \times \C^{ r_2} \,,
$$ where $(\sigma_1 , \ldots , \sigma_{r_1})$ and $(\sigma_{1+r_1}, \ldots, 
\sigma_{r_1 + r_2}, \overline{\sigma_{1+r_1} }, \ldots , \overline{\sigma_{r_1+r_2} })$
denote respectively the list of  real and complex field embeddings of $K$ 
 (see \cite{S}[page 68]). Denote by $\cC_K$ the set of compact convex polygons 
 of $\R^{r_1} \times \C^{ r_2}$ whose extreme points belong to $\sigma (K)$.

\noindent Then 
 $(\cC_K, \oplus= {\rm  conv}\, (  \cup ), +)$ is a   commutative cancellative perfect 
semi-ring.  Its semi-field of fractions is given by
$$
\cH_K = \{ l_A - l_B / \, A, B \in \cC_K\} \, 
$$ where $l_A$ denotes the support function of $A$ (cf Theorem \ref{thm:conv}).

\end{example}
\s
We now recall the definition of a congruence for a semi-ring $\cR$. It is the analogue of the notion of 
ideal in Ring theory. 
\begin{definition} \label{def:cong} Let $\cR$ be a commutative cancellative semi-ring of characteristic $1$.
\item 1] A congruence $\sim$ on $\cR$ is an equivalence relation on $\cR$
satisfying the following condition valid for all $X,Y, Z \in \cR$: 

  \item If $X \sim Y$ then   $X+Z \sim Y+Z$, and $X\oplus Z \sim Y\oplus Z$.
\item 2] The  congruence 
$\sim$  is said to be cancellative if for any $X,Y, Z \in \cR$, 
$$X + Y \sim X +Z\, \Rightarrow Y\sim Z\,.
$$
 Observe that  for a cancellative congruence $\sim$,   the class of $0$  (denoted $[0]$), 
 is a cancellative sub semi-ring of $\cR$. 

\end{definition}
It is well known (\cite{Golan}) that if  $\sim$ is a cancellative congruence (as above)  then the quotient semi-ring $\cR/\sim$ is  cancellative.

\s Every cancellative semi-ring $\cR$ is canonically embedded in its semi-field  of fractions $\cF$ so that $\cF =\{ X-Y/\; X, Y \in \cR\}$ (\cite{Golan}). 
Therefore, it is interesting to examine to which extent a cancellative congruence on $\cR$ can, in some sense, be extended to $\cF$.

\begin{proposition} \label{prop:corr} \item 1] Let $\sim$ be a cancellative congruence on $\cR$. Then, one defines in an intrinsic way 
a congruence $\sim_\cF$ on $\cF$ (see Definition \ref{def:congF}) by saying that:
$$
\forall A,B,A',B' \in \cR,\; \; A-B \, \sim_\cF \,A'-B' , \;\, {\rm iff}\; A+B' \,\sim\, A' + B\,.
$$ The class of $0$ for $\sim_\cF$  is given by $\pi_{\sim_\cF}(0) = 
\{A -B / \, A, B \in \cR, \, A \sim B \}$.

Moreover, for any $A, B \in \cR$, $A \sim B \Leftrightarrow A \sim_\cF B$. 

\item 2] Conversely, let $\sim_1$ be a congruence on $\cF$. Then there exists a unique cancellative congruence 
$\sim$ on $\cR$ such that $\sim_\cF= \sim_1$. Basically, $\sim$ is the restriction of $\sim_1$ to $\cR$.

\item 3] The natural map $\cR/ \sim \, \, \rightarrow \cF / \sim_\cF$ is an injective homomorphism of semi-rings and identifies $\cF / \sim_\cF$ with  the semi-field of fractions 
of $\cR/ \sim$.
\end{proposition}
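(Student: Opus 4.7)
The plan is to treat the three parts in order, with Part 1 being the main technical part. For Part 1, the key point is that the relation $\sim_\cF$ defined by $A-B \sim_\cF A'-B' \iff A+B' \sim A'+B$ must be shown to be a well-defined equivalence relation on $\cF$ which is compatible with $-$, $+$ and $\oplus$. Well-definedness relies crucially on the cancellativity of $\sim$: if $A+B_1=A_1+B$ and $A'+B'_1=A'_1+B'$ in $\cR$ (so the fractions coincide) and $A+B'\sim A'+B$, then by adding $B_1+B'_1$ to both sides and rewriting the resulting expressions using the two equalities in $\cR$, one gets $(A_1+B'_1)+(B+B')\sim (A'_1+B_1)+(B+B')$, and then cancellativity of $\sim$ (Definition \ref{def:cong}.2]) yields $A_1+B'_1 \sim A'_1+B_1$. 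Reflexivity and symmetry of $\sim_\cF$ are immediate; transitivity again comes down to an addition-and-cancellation argument exactly of the same shape. The identity $A\sim B \iff A \sim_\cF B$ (for $A, B \in \cR$) follows by taking the trivial denominator $0$.

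Next I would check compatibility of $\sim_\cF$ with the algebraic operations of $\cF$. For $-$ the claim is symmetric in $(A,B)$ and $(A',B')$, so it is trivial. For $+$, write $X+Z=(A+C)-(B+D)$ and add $C+D$ to the hypothesis $A+B'\sim A'+B$. For $\oplus$, the computation uses the distributivity of $+$ over $\oplus$ to obtain the identity
$$
(A-B)\oplus(C-D)+(B+D) = (A+D)\oplus(B+C)
$$
in $\cF$, which reduces $X\oplus Z \sim_\cF Y\oplus Z$ to a comparison of two elements of $\cR$; one then adds appropriate terms to $A+B'\sim A'+B$ to compare them via the compatibility of $\sim$ with $\oplus$. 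Finally, the description of the class of $0$ is direct from the definition: $X\sim_\cF 0$ iff $X=A-B$ with $A+0\sim B+0$, i.e.\ $A\sim B$.

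For Part 2, given a congruence $\sim_1$ on $\cF$, define $\sim$ on $\cR$ as the restriction of $\sim_1$. That $\sim$ is a congruence on the semi-ring $\cR$ is immediate; it is cancellative because in $\cF$ one may add $-X$ to $X+Y\sim_1 X+Z$ (an operation preserved by $\sim_1$ thanks to Definition \ref{def:congF}), producing $Y\sim_1 Z$ and hence $Y\sim Z$. To show $\sim_\cF\,=\,\sim_1$, recall from Lemma \ref{lem:cong}.1] that $\sim_1$ is controlled by its class of $0$; then $A-B\sim_1 A'-B'$ iff $(A+B')-(A'+B)\sim_1 0$ iff $A+B'\sim_1 A'+B$ iff $A+B'\sim A'+B$ (since both sides lie in $\cR$) iff $A-B\sim_\cF A'-B'$. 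Uniqueness is then forced: any cancellative congruence $\sim'$ on $\cR$ with $\sim'_\cF=\sim_1$ must satisfy, for $A,B\in \cR$, $A\sim' B \iff A\sim'_\cF B \iff A\sim_1 B$, so $\sim'$ coincides with $\sim$.

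For Part 3, the natural map $[A]_\sim \mapsto [A]_{\sim_\cF}$ is well-defined (the forward implication $A\sim B\Rightarrow A\sim_\cF B$) and injective by the reverse implication of the last statement of Part 1. It is clearly a homomorphism of semi-rings. Every element of $\cF/\sim_\cF$ is of the form $[A-B]=[A]-[B]$ with $A,B\in \cR$, so $\cF/\sim_\cF$ consists of differences of elements of (the image of) $\cR/\sim$; since $\cF/\sim_\cF$ is a semi-field of characteristic $1$ by Proposition \ref{prop:alg}.1], it is the semi-field of fractions of $\cR/\sim$. The main obstacle is Part 1, specifically the well-definedness of $\sim_\cF$ on equivalence classes of fractions and the $\oplus$-compatibility check, both of which rely on the cancellativity of $\sim$.
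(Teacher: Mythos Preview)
Your proposal is correct and follows essentially the same approach as the paper. The paper only writes out Part~1 in detail (well-definedness via adding and cancelling, then $\oplus$-compatibility via the distributivity identity) and leaves Parts~2 and~3 to the reader; your treatment of those parts is exactly what is intended. One cosmetic difference: for the $\oplus$-compatibility the paper adds $B+B'+V$ simultaneously to both sides of $(A-B)\oplus(U-V)\sim_\cF(A'-B')\oplus(U-V)$ so as to land directly in $\cR$, which is marginally slicker than first expressing each side as an explicit difference, but the underlying computation is the same.
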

\begin{proof} 1] First let us show that $\sim_\cF$ is well defined. Suppose that $A_1-B_1= A-B$ for two other elements $A_1, B_1$ of 
$\cR$. Assume moreover that  $A+B' \,\sim\, A' + B$ and let us show that $A_1+B' \,\sim\, A' + B_1$. 
By the congruence properties of  $\sim$,  we  deduce:
$$
A+B' + B_1 \,\sim\, A' + B + B_1 \,.
$$ Since $A_1 + B = A + B_1$, we can replace $A + B_1$ by $A_1 + B$ in the left hand side. One obtains: $A_1 + B + B' \, \sim A' + B + B_1$. 
By the cancellativity of  $\sim$, we can simplify by $B$ and get: $A_1 +  B' \, \sim A' + B_1 $ as desired. Going a bit further along this argument, one obtains that $\sim_\cF$  is intrinsically defined that way and that it is an equivalence relation. Moreover, 
it becomes clear that for any $A, B \in \cR$, $A \sim B \Leftrightarrow A \sim_\cF B$. We shall use this fact  soon.

Now let  $ A,B,A',B' \in \cR$ be such that $A+B' \,\sim\, A' + B$.  Consider then $U, V \in \cR$. Let us show that:
\begin{equation} \label{eq:proplus}
(A-B) \oplus (U- V) \, \sim_\cF \, (A'-B') \oplus ( U-V) \,.
\end{equation} By adding $B+B' + V$ to both sides and using the very definition of $\sim_\cF$, one obtains that \eqref{eq:proplus} is equivalent to:
$$
(A+ B'+ V) \oplus (U+ B+B') \, \sim \, (A'+B +V) \oplus ( U+ B+B') \,.
$$ But this is true by the hypothesis  $A+B' \,\sim\, A' + B$ and the fact that $\sim$ is a congruence. Lastly, it is clear 
that $A-B \sim_\cF A'-B'$ implies:
 $$B-A \sim_\cF B'-A' , \; {\rm and}\; \; \forall U, V \in \cR , \,\; A-B + U-V  \sim_\cF A'-B' +U-V\,.
 $$
We have thus proved that 
$\sim_\cF$ is a congruence.

2] and 3] are easy and left to the reader.

\end{proof}


\s
Notice that in  general $\pi_{\sim_\cF}(0)$ does not coincide with the semi-field of fractions $[0]_\cF$ of the class of $0$ 
for $\sim$.
A counter example 
is provided by the semi-ring $(\cR_0, \max( , ) , +)$ of all the convex functions on $[0,1]$ with values in $\R^+$. The congruence 
$\sim$ being defined by $f \sim g$ if $f=g$ on $[1/5 , 2/5] \cup [3/5 , 4/5]$. The point is that  any element 
of $[0]_\cF$ vanishes identically on $[1/5 , 4/5]$.

The next Proposition, whose proof is left to the reader,  gives a sufficient condition for this to be true

 \begin{proposition} \label{prop:cong} We keep the notations  of  the previous Proposition and  assume moreover that $\sim$ satisfies the following additional hypothesis:
$$\forall X,Y \in \cR,\; \Bigl( X\sim Y \; {\rm and}\, X\leq Y \Bigr)\, \Rightarrow \, \exists Z \in \cR, \; X+Z=Y\,.
$$
Then $\pi_{\sim_\cF}(0)$ coincides with the semi-field of fractions of the class of $0$ for $\sim$.




%
\end{proposition}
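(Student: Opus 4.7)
The plan is as follows. The containment $[0]_\cF \subset \pi_{\sim_\cF}(0)$, where $[0]_\cF$ denotes the semi-field of fractions of the class of $0$ for $\sim$, is immediate from transitivity: if $A, B \in \cR$ both satisfy $A \sim 0$ and $B \sim 0$, then $A \sim B$, so $A - B \in \pi_{\sim_\cF}(0)$ by the explicit description in Proposition \ref{prop:corr}.1]. The substance of the proof is the reverse containment.

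So fix $A - B \in \pi_{\sim_\cF}(0)$, i.e. $A, B \in \cR$ with $A \sim B$. The idea is to use $A \oplus B$ as a common upper bound equivalent to both $A$ and $B$. Indeed, since $\sim$ is a congruence, $A \sim B$ gives $A \oplus B \sim B \oplus B = B$ and symmetrically $A \oplus B \sim A$. Moreover $A \leq A \oplus B$ and $B \leq A \oplus B$. The additional hypothesis applied to the pairs $(A, A \oplus B)$ and $(B, A \oplus B)$ then produces elements $Z_1, Z_2 \in \cR$ with
\[
A + Z_1 \,=\, A \oplus B \,=\, B + Z_2\,.
\]

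Next I verify that $Z_1, Z_2$ lie in the class of $0$ for $\sim$. From $A + Z_1 = A \oplus B$ and the congruence $A \oplus B \sim A = A + 0$, one gets $A + Z_1 \sim A + 0$, and the cancellativity of $\sim$ (Definition \ref{def:cong}.2]) yields $Z_1 \sim 0$; the same argument gives $Z_2 \sim 0$. Finally, the equation $A + Z_1 = B + Z_2$ in $\cR$ translates in $\cF$ to $A - B = Z_2 - Z_1$, which exhibits $A - B$ as a difference of two elements of the class of $0$ for $\sim$, i.e.\ as an element of $[0]_\cF$.

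The only delicate point is setting up the right pair of differences, which the trick $A \oplus B$ handles cleanly; once that is in hand, the rest is bookkeeping with the congruence axioms and the cancellativity of $\sim$. No topology or Banach structure is used, so this is purely algebraic and fits the spirit of Proposition \ref{prop:corr}.
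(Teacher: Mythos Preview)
Your proof is correct. The paper leaves the proof of this proposition to the reader, so there is no argument in the text to compare against; your approach via the common upper bound $A \oplus B$, followed by an application of the additional hypothesis and cancellativity of $\sim$, is exactly the kind of short algebraic verification the author has in mind.
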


\s
\subsection{ Embeddings of a large class of abstract cancellative semi-rings into semi-rings of continuous functions.} $\;$

\m

Now, we utilize  the perfectness property  for a semi-ring. 

 {\it Until the end of this Section, we shall assume that $(\cR, \oplus, +)$ is a perfect commutative cancellative semi-ring of 
characteristic $1$.} Observe that its semi-field of fractions $\cF$ is  perfect too.

Then 
we know from \cite{Golan}[Prop.4.43] and \cite{C}[Lemma 4.3] that the maps $\theta_n$ of the  Definition \ref{def:gen}.4] are bijective for all 
$n\in \N^*$ and also that:

\begin{lemma} \label{lem:alain} \cite{C}[Prop. 4.5]
 The equality 
$$\theta_{a/b}= \theta_a \circ \theta_b^{-1}, \; (a,b) \in \N^* \times \N^*\,,$$
defines an action of $\Q^{+*}$ on $(\cR, \oplus, +)$ satisfying:
$$ \forall t, t' \in \Q^{+*} , \;
\theta_{t t'}= \theta_t \circ \theta_{t'} ,\; \;\forall X \in \cR,\; 
\theta_t(X) + \theta_{t'}(X) = \theta_{t + t'}(X)\,.
$$ In the sequel, we shall write $t X$ instead of $\theta_t(X)$.
\end{lemma}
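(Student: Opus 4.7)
The plan is to leverage the already-established bijectivity of the integer Frobenius maps $\theta_n$ (Proposition~\ref{prop:Go}, transferred to semi-rings via the same proof) together with the semi-ring axioms. The starting observation is the multiplicative identity $n(mX) = (nm)X$, valid for all $m,n \in \N^*$ and all $X \in \cR$, which translates to $\theta_n \circ \theta_m = \theta_{nm} = \theta_m \circ \theta_n$. Since each $\theta_m$ is a bijection, applying $\theta_m^{-1}$ on the left and right of $\theta_n \circ \theta_m = \theta_m \circ \theta_n$ yields the commutation $\theta_n \circ \theta_m^{-1} = \theta_m^{-1} \circ \theta_n$. This is the central technical step; everything else is bookkeeping.

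Next I would check that $\theta_{a/b} := \theta_a \circ \theta_b^{-1}$ depends only on the rational number $a/b$. If $ab' = a'b$ with $a,a',b,b' \in \N^*$, then by the commutation just established,
\begin{equation*}
\theta_b \circ \theta_{b'} \circ (\theta_a \circ \theta_b^{-1}) = \theta_{b'} \circ \theta_a = \theta_{a b'} = \theta_{a' b} = \theta_b \circ \theta_{a'} = \theta_b \circ \theta_{b'} \circ (\theta_{a'} \circ \theta_{b'}^{-1}),
\end{equation*}
and cancelling the bijection $\theta_b \circ \theta_{b'}$ on the left gives $\theta_a \circ \theta_b^{-1} = \theta_{a'} \circ \theta_{b'}^{-1}$. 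So the map $t \mapsto \theta_t$ is well defined on $\Q^{+*}$.

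The composition law $\theta_{tt'} = \theta_t \circ \theta_{t'}$ for $t = a/b$, $t' = a'/b'$ then follows by writing
\begin{equation*}
\theta_t \circ \theta_{t'} = \theta_a \circ \theta_b^{-1} \circ \theta_{a'} \circ \theta_{b'}^{-1} = \theta_a \circ \theta_{a'} \circ \theta_b^{-1} \circ \theta_{b'}^{-1} = \theta_{aa'} \circ (\theta_b \circ \theta_{b'})^{-1} = \theta_{aa'/(bb')},
\end{equation*}
where the middle equality uses the commutation freely and the last equality uses the well-definedness just proved. For the additivity relation $\theta_t(X) + \theta_{t'}(X) = \theta_{t+t'}(X)$, I would put $t, t'$ over a common denominator, writing $t = p/q$ and $t' = p'/q$ with $p,p',q \in \N^*$. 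Setting $Y = \theta_q^{-1}(X)$, so that $\theta_{p/q}(X) = \theta_p(Y) = pY$ and likewise $\theta_{p'/q}(X) = p'Y$, the desired equality reduces to the elementary integer identity $pY + p'Y = (p+p')Y$, which is immediate in the commutative monoid $(\cR, +)$; applying $\theta_q^{-1}$ to $X$ and then $\theta_{p+p'}$ recovers $\theta_{(p+p')/q}(X) = \theta_{t+t'}(X)$.

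The only delicate point is the commutation $\theta_n \circ \theta_m^{-1} = \theta_m^{-1} \circ \theta_n$, and this rests critically on the bijectivity of the $\theta_n$ supplied by the perfectness hypothesis together with Proposition~\ref{prop:Go}; once this is in hand, the remaining verifications are formal manipulations with bijective endomorphisms, and I would not expect any further obstacle.
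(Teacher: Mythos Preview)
Your proof is correct. The paper itself does not supply a proof of this lemma but simply cites \cite{C}[Prop.~4.5] (and, just before the lemma, \cite{Golan}[Prop.~4.43] and \cite{C}[Lemma~4.3] for the bijectivity of the integer maps $\theta_n$ on cancellative perfect semi-rings); your argument is exactly the standard verification one expects behind that citation, resting on the commutation $\theta_n \circ \theta_m^{-1} = \theta_m^{-1} \circ \theta_n$ derived from $\theta_{nm} = \theta_n \circ \theta_m$ and bijectivity.
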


\s 

We shall also assume until the end of  this Section  that $\cR$ satisfies the following (cf \cite{C}):
\begin{assumption} \label{ass:c}  There exists $E \in \cR$ such that:  
$$\forall X \in \cR, \; \, \exists t \in \Q^+,\,\; - tE\leq X \leq t E\,.$$
 
\end{assumption}

Of course, these inequalities hold in the semi-field $\cF$ of fractions of $\cR$ and they imply clearly that for any $Z \in \cF$ there exists $t \in \Q^+$ such that
$- t E \leq Z \leq t E$.
\s

We leave to the reader the proof of the next Lemma, it follows easily from standard arguments used in Section 3.
\begin{lemma} One defines a cancellative congruence $\sim$ on $\cR$ by saying that
$X \sim Y $ if:
 $$\forall t \in \Q^+,\;\; -t E \leq X-Y \leq t E\, ,$$
 where of course these inequalities hold in  $\cF$. 
\end{lemma}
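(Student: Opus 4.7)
The plan is to verify in order each axiom listed in Definition~\ref{def:cong}, together with cancellativity, and to check that the relation makes sense on $\cR$ even though the defining inequalities live in $\cF$. Since $\cR \subset \cF$, the expression $X-Y$ makes sense in $\cF$ for $X,Y \in \cR$, and one already knows from Lemma~\ref{lem:dist}.2] (applied in $\cF$) that $0 \le E$, so $-tE \le 0 \le tE$ for every $t \in \Q^+$; this gives reflexivity. Symmetry is immediate from Lemma~\ref{lem:oplus}.2], which says that $-tE \le X-Y \le tE$ is equivalent to $-tE \le Y-X \le tE$.

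For transitivity, suppose $X \sim Y$ and $Y \sim Z$ and fix $t \in \Q^+$. Applying the definition with $t/2$, one has $-(t/2)E \le X-Y \le (t/2)E$ and $-(t/2)E \le Y-Z \le (t/2)E$; adding these inequalities in the group $(\cF,+)$ (using Lemma~\ref{lem:oplus}.1]) yields $-tE \le X-Z \le tE$. Since $t \in \Q^+$ was arbitrary, $X \sim Z$.

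For compatibility with $+$, note that $(X+W)-(Y+W) = X-Y$ in $\cF$, so the defining inequalities for $X \sim Y$ transfer verbatim to $X+W \sim Y+W$ for every $W \in \cR$. The only substantive step is compatibility with $\oplus$. Suppose $X \sim Y$ and let $W \in \cR$, $t \in \Q^+$. From $X \le Y + tE$ and the monotonicity properties of Lemma~\ref{lem:oplus}.1] applied in $\cF$, one gets
\[
X \oplus W \,\le\, (Y+tE) \oplus W \,\le\, (Y+tE) \oplus (W+tE) \,=\, (Y \oplus W) + tE,
\]
where the last equality uses distributivity of $+$ over $\oplus$. Symmetrically, $Y \oplus W \le (X \oplus W) + tE$, so $-tE \le (X \oplus W)-(Y \oplus W) \le tE$ in $\cF$; as $t \in \Q^+$ was arbitrary, $X \oplus W \sim Y \oplus W$.

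Finally, cancellativity is free: if $X+Y \sim X+Z$ in $\cR$, then $(X+Y)-(X+Z) = Y-Z$ in $\cF$, so the inequalities in the definition of $X+Y \sim X+Z$ are literally the inequalities defining $Y \sim Z$. The main (mild) obstacle is the $\oplus$-compatibility step, where one has to combine the ultrametric-type estimate for $\oplus$ with the distributivity identity $(Y+tE) \oplus (W+tE) = (Y \oplus W) + tE$; everything else is a direct transcription of Lemma~\ref{lem:oplus}.
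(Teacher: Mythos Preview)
Your proof is correct and follows precisely the route the paper has in mind: the paper leaves this lemma to the reader, remarking only that it ``follows easily from standard arguments used in Section~3,'' and your argument is exactly that---reflexivity and symmetry from Lemma~\ref{lem:oplus}, transitivity by halving $t$, and $\oplus$-compatibility via the same computation as in Lemma~\ref{lem:cont}.1]. The cancellativity check is indeed immediate from the fact that the defining condition depends only on the difference $X-Y$.
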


\s 
At the expense of replacing $\cR$ by  $\cR/\sim$ we may, {\bf and shall}, assume also the following assumption for the rest of this Section:
\begin{assumption} \label{ass:d}
For any $X, Y \in \cR$, set:
 $$r(X-Y) = \inf\, \{ t \in \Q^+/\; 
-t E \leq X-Y \leq t E \} \,.
$$  Then,  $r(X-Y)= 0$ implies that $X=Y$. 

\end{assumption}
 It is clear  
 that Assumptions \ref{ass:c} and \ref{ass:d} imply that $\cF$ satisfies Assumptions 1 and 2. Therefore,   
by Lemma \ref{lem:dist}, $(X,Y) \mapsto r(X-Y)$ defines  a distance on $\cF$ and, by restriction, on $\cR$.
Next we consider the set of normalized characters on $\cR$.

\s
\begin{lemma} Denote by $S_E(\cR)$ the set of semi-ring homomorphisms (called characters) $\phi: (\cR, \oplus, +) \rightarrow 
(\R,  \max ( , ), +)$ satisfying $\phi (E)=1$. Endow $S_E(\cR)$ with the weakest topology (called $\cT$) rendering continuous all
the maps $\phi \mapsto \phi(X),\, X \in \cR$. 

Then the restriction map
$ 
\Xi: S_E(\cF) \rightarrow S_E(\cR)
$ induces a homeomorphism between compacts sets for the topologies $\cT$.
\end{lemma}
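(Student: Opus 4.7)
The plan is to verify that $\Xi$ is a well-defined continuous bijection between the two compact Hausdorff spaces; once bijectivity and continuity are established, compactness of $S_E(\cF)$ (Theorem \ref{thm:comp}) combined with Hausdorffness of $S_E(\cR)$ will automatically upgrade it to a homeomorphism (alternatively one checks $\Xi^{-1}$ is continuous directly, which is easy given the definition of $\cT$).

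First I would check that $\Xi$ is well defined and injective. Any $\phi \in S_E(\cF)$ restricts to a semi-ring homomorphism $\phi_{|\cR} : \cR \to \R$ that still sends $E$ to $1$, so $\Xi(\phi) \in S_E(\cR)$. For injectivity, since $\cF = \{X-Y \,/\, X,Y \in \cR\}$ and every character $\phi$ is additive on the group $(\cF,+)$, one has $\phi(X-Y) = \phi(X)-\phi(Y)$. Hence $\phi$ is entirely determined by its restriction to $\cR$.

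The main step, and the one where I expect some care to be needed, is surjectivity, i.e.\ extending a character $\psi \in S_E(\cR)$ to a character $\tilde\psi \in S_E(\cF)$. I would set $\tilde\psi(X-Y) := \psi(X)-\psi(Y)$ for $X,Y \in \cR$; well-definedness follows because $X-Y = X'-Y'$ means $X+Y' = X'+Y$ in $\cR$, and additivity of $\psi$ on $\cR$ gives $\psi(X)-\psi(Y) = \psi(X')-\psi(Y')$. Additivity of $\tilde\psi$ on $\cF$ and the normalization $\tilde\psi(E)=1$ are immediate. The delicate point is to verify that $\tilde\psi$ respects $\oplus$. Writing $Z_i = X_i - Y_i$ ($i=1,2$), one uses the identity
\begin{equation*}
(X_1-Y_1) \oplus (X_2-Y_2) \; = \; \bigl( (X_1+Y_2) \oplus (X_2+Y_1) \bigr) \,-\, (Y_1+Y_2)\,,
\end{equation*}
valid in $\cF$ by distributivity of $+$ over $\oplus$. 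Applying $\tilde\psi$ and using that $\psi$ is additive and preserves $\oplus = \max$ on $\cR$ yields
\begin{equation*}
\tilde\psi(Z_1 \oplus Z_2) \; = \; \max(\psi(X_1)+\psi(Y_2),\,\psi(X_2)+\psi(Y_1)) - \psi(Y_1) - \psi(Y_2) \; = \; \max(\tilde\psi(Z_1), \tilde\psi(Z_2))\,.
\end{equation*}
Thus $\tilde\psi \in S_E(\cF)$ and $\Xi(\tilde\psi) = \psi$.

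Finally, continuity of $\Xi$ is tautological: the topology $\cT$ on $S_E(\cF)$ is the weakest making all evaluations $\phi \mapsto \phi(X)$ continuous for $X \in \cF$, and in particular for $X \in \cR$; so $\Xi$ pulls back subbasic open sets of $S_E(\cR)$ to open sets. For continuity of $\Xi^{-1}$, it suffices to check that $\psi \mapsto \tilde\psi(Z)$ is continuous on $S_E(\cR)$ for every $Z \in \cF$; writing $Z = X-Y$ with $X,Y \in \cR$, one has $\tilde\psi(Z) = \psi(X) - \psi(Y)$, which is continuous by definition of the topology $\cT$ on $S_E(\cR)$. Both spaces being Hausdorff (they are subspaces of products of intervals), $\Xi$ is then a homeomorphism; compactness of $S_E(\cR)$ follows as the continuous image of the compact $S_E(\cF)$.
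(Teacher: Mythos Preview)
Your proof is correct and follows essentially the same route as the paper: extend a character $\psi\in S_E(\cR)$ to $\cF$ by $\tilde\psi(X-Y)=\psi(X)-\psi(Y)$, verify the $\oplus$-compatibility via the identity $(X_1-Y_1)\oplus(X_2-Y_2)=\bigl((X_1+Y_2)\oplus(X_2+Y_1)\bigr)-(Y_1+Y_2)$, and conclude by the compact-to-Hausdorff argument. The paper is slightly terser (it leaves injectivity implicit and only remarks that continuity of $\Xi^{-1}$ ``can also be checked by hands''), but the substance is identical.
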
 
\begin{proof} Observe that $(S_E(\cR) , \cT)$ is Hausdorff, and recall that $S_E(\cF)$ is compact by Theorem \ref{thm:comp}. Let $\phi \in S_E(\cR) $, let us show that 
it can be uniquely extended to an element of  $S_E(\cF)$. Consider an element $X \in \cF$, write it as 
$$
X= A-B = A'-B' , \; {\rm for}\; A,B,A',B' \in \cR\,.
$$ By applying $\phi$ to the equality $A+B' = A'+B$, one checks that $\phi(A) -\phi(B)= \phi(A')-\phi(B')$; we call this number $\phi(X)$. This defines an additive map $\phi: (S_E (\cF) , +) \rightarrow (\R ,+)$

Next writing:
$$
(A_1-B_1) \oplus (A_2-B_2)= (A_1 +B_2) \oplus (A_2 + B_1) - B_1- B_2\, , 
$$ where the $A_j, B_j$ belong to $\cR$,  one checks that:
$$
\phi \bigl( (A_1-B_1) \oplus (A_2-B_2) \bigr) = \max ( \phi (A_1-B_1) , \phi (A_2-B_2) )\,.
$$ This shows that $\phi$ can be (uniquely) extended to an element of $S_E(\cF)$. 
Therefore, $\Xi$ is a bijection. It is clearly continuous, then by the compactness of $S_E(\cF)$, 
$\Xi$ is a homeomorphism. One can also check this last fact by hands.
\end{proof} 

\s 
\begin{example} With the notations of Theorem \ref{thm:conv}, 
$S_E( \cC)$ is an euclidean sphere of $F^*$.
\end{example}

\s

The next Theorem provides a natural semi-ring embedding of $\cR$ (satisfying Assumption \ref{ass:d}) as 
a sub space of continuous functions on the compact space $S_E(\cR)$.
\begin{theorem} \label{thm:mapj} One defines an injective homomorphism $j$ of semi-rings in the following way:
$$
j: (\cR, \oplus, +) \rightarrow (C^0(S_E(\cR), \R) \max ( ,), +)\,,
$$
$$
X \mapsto j_X\,:\, \phi \mapsto \phi(X)= j_X ( \phi)\,.
$$
\end{theorem}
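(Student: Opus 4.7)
First I would dispatch continuity and the homomorphism property, both of which are essentially formal. The map $\phi\mapsto\phi(X)$ is continuous on $S_E(\cR)$ by the very definition of $\cT$, and the bound $|\phi(X)|\le r(X)$ follows from Lemma \ref{lem:char}.1] applied to the extension of $\phi$ to $S_E(\cF)$ provided by the preceding lemma; hence $j_X\in C^0(S_E(\cR),\R)$. The equalities $j_{X+Y}=j_X+j_Y$ and $j_{X\oplus Y}=\max(j_X,j_Y)$ then follow pointwise from Definition \ref{def:char}.

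The substance of the theorem is injectivity. Suppose $j_X=j_Y$ and set $Z=X-Y\in\cF$; using additivity of each character of $\cF$, this translates to $\tilde\phi(Z)=0$ for every $\tilde\phi\in S_E(\cF)\simeq S_E(\cR)$. By Assumption \ref{ass:d} it is enough to show $r(Z)=0$. To invoke Theorem \ref{thm:char}, which requires completeness, I would pass to the completion $\widehat{\cF}$ of $\cF$ for the distance $r(A-B)$. Since $\oplus$, $+$ and $X\mapsto -X$ are all $r$-continuous on $\cF$ (Lemmas \ref{lem:dist} and \ref{lem:cont}), they extend to $\widehat{\cF}$, and all the identities of Definition \ref{def:1} (commutativity, associativity, distributivity, idempotence of $\oplus$, group axioms for $+$) pass to $\widehat{\cF}$ by density and continuity. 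Each $\theta_n$ is an $r$-isometry of $\cF$ with range $\cF$ (Proposition \ref{prop:Go} plus Lemma \ref{lem:dist}.4]), so extends to an isometric bijection of $\widehat{\cF}$, giving perfectness. The element $E$ still witnesses Assumption \ref{ass:1} in $\widehat{\cF}$ (the order $X\le Y \Leftrightarrow X\oplus Y=Y$ extends continuously), and the extended $r$ still separates points by construction, so $\widehat{\cF}$ is a Banach semi-field in the sense of Definition \ref{def:dist}.

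The final link is the identification $S_E(\widehat{\cF})=S_E(\cF)$. Every $\tilde\phi\in S_E(\cF)$ is $1$-Lipschitz for $r$, since $|\tilde\phi(A)-\tilde\phi(B)|\le r(A-B)$ by Lemma \ref{lem:char}.1], so extends uniquely by uniform continuity to a character of $\widehat{\cF}$; conversely, restriction sends $S_E(\widehat{\cF})$ into $S_E(\cF)$. Applying Theorem \ref{thm:char} inside $\widehat{\cF}$, if $r(Z)>0$ there would exist $\phi\in S_E(\widehat{\cF})$ with $|\phi(Z)|=r(Z)>0$, whose restriction $\tilde\phi\in S_E(\cF)$ would contradict $\tilde\phi(Z)=0$. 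Hence $r(Z)=0$ and $X=Y$ by Assumption \ref{ass:d}. The main obstacle I expect is the careful verification that the completion $\widehat{\cF}$ genuinely satisfies the hypotheses of Section 3 — in particular surjectivity of the extended $\theta_n$, and the fact that the partial order (and thus $r$, Assumption \ref{ass:1} and Assumption \ref{ass:2}) behaves well under completion — because only once this is established can Theorem \ref{thm:char} be applied off the shelf to produce the separating character.
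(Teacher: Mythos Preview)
Your proposal is correct and follows essentially the same route as the paper: complete $\cF$ to the Banach semi-field $\widehat{\cF}$, identify $S_E(\widehat{\cF})\simeq S_E(\cF)\simeq S_E(\cR)$ via restriction, and then use the spectral theory of Section~3 to obtain injectivity. The only cosmetic difference is that the paper invokes Theorem~\ref{thm:Tilby} (the full Gelfand--Naimark isomorphism for $\widehat{\cF}$, with $j=\Theta_{|\cR}$), whereas you invoke the slightly weaker Theorem~\ref{thm:char} directly; for the purpose of injectivity these are interchangeable, and your identification of the completion step as the only genuine technical obstacle matches the paper's own emphasis.
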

\begin{proof}  We just give a short sketch,  the interested reader can fill in the details by using the material of Section 2.1.
One checks that the completion $\widehat{\cF}$ of $\cF$ for the metric $r(X-Y)$ is endowed naturally with the 
structure of a commutative Banach perfect semi-field of characteristic $1$. Moreover, the injection $i: \cF \rightarrow 
\widehat{\cF}$ defines a homomorphism of semi-fields. 
\begin{lemma} \label{lem:extchar} The restriction map $\Xi: S_E(\widehat{\cF} ) \rightarrow S_E({\cF} )$ 
defines a homeomorphim between compacts sets endowed with the topology $\cT$.
\end{lemma}
\begin{proof} Consider $\phi \in S_E({\cF} )$  and let us show briefly  that it can extended uniquely 
to an element of $S_E(\widehat{\cF} )$. Let $X \in \widehat{\cF}$ and $(X_n)$ a sequence of points of 
$\cF$ converging to $X$. Then there exists a sequence of positive rationals $(t_n)$ such that $\lim_{ n \rightarrow + \infty} 
t_n = 0$ and:
$$
\forall n, p \in \N,\; -t_n E \leq X_n - X_{n+p} \leq t_n E \,.
$$ By the properties of $\phi \in S_E({\cF} )$, one gets $ -t_n \leq \phi (X_n) - \phi(X_{n+p}) \leq t_n$. 
Thus for any sequence $(X_n)$ in $\cF$ converging to $X$, the sequence of reals  $(\phi (X_n))$ converge. 
It is standard that this limit does not depend on the choice of $(X_n)$, call it $\phi(X)$. Using the material of Section 2.1, one checks right away 
that this extension $\phi: \widehat{\cF} \rightarrow \R$ is a normalized character. Therefore $\Xi$ is a bijection, which is obviously continuous.
By the compactness of $S_E(\widehat{\cF} )$, $\Xi$ is a homeomorphism. This last fact can also be proved by hands.

\end{proof}
By the two previous Lemmas,  we have natural homeomorphisms: 
$$
 \ S_E(\widehat{\cF} ) \simeq S_E({\cF} ) \simeq S_E({\cR} )\,.
$$ 
Then,  
 the Theorem follows  
  by applying 
Theorem \ref{thm:Tilby} to $\widehat{\cF}$ , where   $j$  denotes the restriction of $\Theta$ 
to $\cR$ $( \subset \widehat{\cF})$.
 \end{proof}

One obtains the following  analogue of  \cite{KL}[Cor.2.3.5] from non archimedean theory.
\begin{corollary} \label{cor:ked} Let $\sim$ be a non trivial cancellative congruence on $\cR$.
 Then, there exists $\phi \in S_E( \cR)$, such that:
$$
\forall X,Y \in \cR,\; X \sim Y\, \Rightarrow \phi(X) = \phi(Y)\,.
$$
\end{corollary}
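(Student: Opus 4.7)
The plan is to reduce to the Banach setting by passing to the completion $\widehat{\cF}$ of $\cF$ and then applying Proposition \ref{prop:=}. By Proposition \ref{prop:corr}, the cancellative congruence $\sim$ on $\cR$ extends uniquely to a congruence $\sim_\cF$ on the semi-field of fractions $\cF$, and $\sim_\cF$ is still non trivial since its restriction to $\cR$ is $\sim$. As in the proof of Theorem \ref{thm:mapj}, let $\widehat{\cF}$ denote the completion of $\cF$ for the distance $r(X-Y)$; this is a commutative Banach perfect semi-field. Let $\widehat{\cH}$ be the closure of $\pi_{\sim_\cF}(0) \subset \cF$ inside $\widehat{\cF}$. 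Because $\pi_{\sim_\cF}(0)$ is a sub semi-field of $\cF$ satisfying property \eqref{eq:s}, the continuity of $\oplus$ and $+$ on $\widehat{\cF}$ and passage to the limit show that $\widehat{\cH}$ is a sub semi-field of $\widehat{\cF}$ satisfying \eqref{eq:s} there. By Lemma \ref{lem:cong}.2], the relation $X \, \widehat{\sim} \, Y \Leftrightarrow X - Y \in \widehat{\cH}$ is then a closed congruence on $\widehat{\cF}$ extending $\sim_\cF$.

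The delicate point is to show $\widehat{\sim}$ remains non trivial on $\widehat{\cF}$, equivalently that $\pi_{\sim_\cF}(0)$ is not dense in $\widehat{\cF}$. Suppose otherwise; then $E \in \widehat{\cH}$, so there exists a sequence $(H_n) \subset \pi_{\sim_\cF}(0)$ with $r(H_n - E) \to 0$. For $n$ large enough, $r(H_n - E) < 1/2$ yields $\frac{1}{2} E \leq H_n$ inside $\cF$. Since $0$ and $H_n$ both lie in $\pi_{\sim_\cF}(0)$ and $0 \leq \frac{1}{2} E \leq H_n$, the absorption statement of Lemma \ref{lem:cong}.3] forces $\frac{1}{2} E \sim_\cF 0$; scaling by $2$ via Proposition \ref{prop:alg}.2] gives $E \sim_\cF 0$. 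But then, by Assumption \ref{ass:c}, every $X \in \cF$ satisfies $-tE \leq X \leq tE$ for some $t \in \Q^+$ with $\pm tE \sim_\cF 0$, so a further application of Lemma \ref{lem:cong}.3] forces $X \sim_\cF 0$ for every $X \in \cF$, contradicting non triviality of $\sim_\cF$.

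Thus $\widehat{\sim}$ is a non trivial congruence on the Banach semi-field $\widehat{\cF}$, and Proposition \ref{prop:=} furnishes $\widehat{\phi} \in S_E(\widehat{\cF})$ with $X \, \widehat{\sim} \, Y \Rightarrow \widehat{\phi}(X) = \widehat{\phi}(Y)$. The natural homeomorphisms $S_E(\widehat{\cF}) \simeq S_E(\cF) \simeq S_E(\cR)$ supplied by Lemma \ref{lem:extchar} and the lemma preceding Theorem \ref{thm:mapj} let us set $\phi = \widehat{\phi}|_\cR \in S_E(\cR)$. If $X \sim Y$ in $\cR$, then $X \sim_\cF Y$ by Proposition \ref{prop:corr}, hence $X \, \widehat{\sim} \, Y$, hence $\phi(X) = \widehat{\phi}(X) = \widehat{\phi}(Y) = \phi(Y)$, as required. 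The main obstacle is precisely the non triviality step in the middle paragraph: a priori, closing a non trivial congruence in a completion could engulf every pair, and ruling this out relies on the combined interplay of the order, the absorption property of Lemma \ref{lem:cong}.3], the scaling invariance of Proposition \ref{prop:alg}.2], and the absorbing role of $E$ given by Assumption \ref{ass:c}.
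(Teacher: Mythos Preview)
Your proof is correct and follows essentially the same route as the paper's: extend $\sim$ to $\sim_\cF$ via Proposition~\ref{prop:corr}, pass to the Banach completion $\widehat{\cF}$, close the zero-class there to obtain a congruence, rule out triviality by the absorption argument with $E$ (Lemma~\ref{lem:cong}.3] and Proposition~\ref{prop:alg}.2]), and then invoke Proposition~\ref{prop:=} and restrict back to $\cR$. The only cosmetic differences are the constant ($1/2$ versus $9/10$) and that the paper spells out the limiting argument for property~\eqref{eq:s} on $\widehat{\cH}$ a bit more explicitly via the identity~\eqref{eq:closcong}.
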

\begin{proof} By Proposition \ref{prop:corr} the congruence $\sim_\cF$ is not trivial, denote by $\cH$ the class of $0$ (for $\sim_\cF$) in $\cF$. Introduce as above the Banach completion $\widehat{\cF}$ of $\cF$ for the metric $r(X-Y)$.
Consider then the  closure $\overline{\cH}$ of $\cH$ in $\widehat{\cF}$. 
Using  Lemma \ref{lem:cong}, one checks easily that $\overline{\cH}$ is a sub semi-field of $\widehat{\cF}$.
Consider $(X,Y,H) \in \widehat{\cF} \times \widehat{\cF} \times \overline{\cH}$. We are going to show the existence 
of $H' \in \overline{\cH}$ such that:
\begin{equation} \label{eq:closcong}
(X+H) \oplus Y = X \oplus Y \, \, +H'\,.
\end{equation} By Lemma \ref{lem:cong}.2], this will show that the relation  defined by $U \,\overline{\sim_\cF} \, V$ if 
 $U-V \in \overline{\cH}$ is a congruence. Now, by Lemma \ref{lem:cong}.2] applied to $\cF$ 
  there exists a sequence $(X_n, Y_n, H_n)_{n \in \N}$  of $\cF\times \cF \times \cH$ converging  to 
 $(X,Y,H)$ in $\widehat{\cF}^3$ such  the following is true:
  $$
 \forall n \in \N,\; (X_n +H_n) \oplus Y_n= X_n \oplus Y_n\, + H'_n,\; { \rm with}\, H'_n \in \cH\,.
 $$ By letting $n \rightarrow + \infty$ and applying Lemma \ref{lem:cont} (for $\widehat{\cF}$), one obtains \eqref{eq:closcong}.
Now, suppose by the contrary that the congruence $\overline{\sim_\cF}$ is trivial. This means that 
$\overline{\cH}= \widehat{\cF}$. In particular, $  E \in \overline{\cH}$ and, by definition of the metric $r(X-Y)$, there exists $H \in {\cH}$ such that:
$$
-\frac{1}{10} E \leq H-E \leq \frac{1}{10} E\,.
$$ This implies that $0 \leq  \frac{9}{10} E \leq H$. Then, by Lemma \ref{lem:cong}.3], $\frac{9}{10} E \in \cH$ and 
by Proposition \ref{prop:alg}.2],  $ t E \in \cH$ for any $t \in \Q$. By the remark following Assumption \ref{ass:c}  and Lemma \ref{lem:cong}.3] again, one then deduces 
that each $X $ of $\cF$ belongs to $\cH$. So $\sim_\cF$ is trivial, and by Proposition \ref{prop:corr}, 
$\sim$ is also trivial which is a contradiction. Therefore, we have proved that $\overline{\sim_\cF}$ is not trivial.

One then obtains the result by applying  Proposition \ref{prop:=} to $\overline{\sim_\cF}$,   and observing that any  $\phi \in  S_E(\widehat{\cF})$ induces 
by restriction an element of $S_E(\cR)$. 
\end{proof}
 
\s

\subsection{ Banach semi-rings. An analogue of Gelfand-Mazur's Theorem.} $\;$

\m
\s
The definition of Banach semi-ring of characteristic $1$ is quite similar to the one for semi-field given 
in Section 2.1. More precisely, 
in the rest of this Section we shall also make the following:
\begin{assumption} \label{Ass:comp}
 $(\cR, \oplus, +)$ is a commutative perfect Banach semi-ring of characteristic $1$, namely  it is complete 
with respect to the distance defined by $r(X-Y)$.

\end{assumption}
By using Cauchy sequences, the action of $\Q^{+*}$ on $(\cR, \oplus, +)$ described in Lemma \ref{lem:alain} is extended naturally to one of $\R^{+*}$ (see \cite{C}). Then this action of $\R^{+*}$ is extended to one on $(\cF, \oplus, +)$.
Moreover, 
$\cF$ becomes endowed with a natural structure of $\R-$vector space. By using the material of Section 2.1, one checks 
right away that $X \mapsto r(X)$ defines a norm of real vector space on $\cF$.
 But in general $\cF$ is not a Banach semi-field, indeed consider 
the example of the Banach semi-ring of all the convex functions $[0;1] \rightarrow \R^+$. We shall denote by 
$(\widehat{\cF}, \oplus, +)$ the Banach semi-field completion of $\cF$.

\s
One defines the concept of maximal cancellative congruence of $\cR$ (in the class of the cancellative congruences) in a similar way as in Definition \ref{def:order}.
Then we state and  prove the following analogue of the Gelfand-Mazur Theorem in the realm of Banach semi-rings.
\begin{theorem} \label{thm:gmring} Let $\sim$ be a maximal cancellative congruence of the Banach semi-ring $\cR$, and denote by 
$\pi_0: \cR \rightarrow \cR / \sim$ the projection.
 \item 1] Suppose that every element $X_1$ of $\cR /\sim$ satisfies $0 \leq X_1$.
 Then $\cR /\sim = \{ \lambda \pi_0(E) /\, \lambda \in \R^+\}$ and one has  an isomorphism of semi-rings:
  $$( \cR / \sim\, , \oplus, +)  \rightarrow (\R^+, \max ( , ), +)\,,$$
 $$ \lambda \pi_0(E) \mapsto \lambda\,.
 $$
 \item 2] Suppose that there exists an element $X_1$ of $\cR /\sim$ which does not satisfy $0 \oplus X_1 = X_1$. Then one has an isomorphism of semi-rings (actually semi-fields):
 $$
 ( \cR / \sim\, , \oplus, +)  \rightarrow (\R, \max ( , ), +) \,.
 $$
\end{theorem}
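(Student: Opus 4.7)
The plan is to reduce to the semi-field Gelfand--Mazur Theorem~\ref{thm:gm} by passing to the Banach completion $\widehat{\cF}$ of the semi-field of fractions $\cF$ of $\cR$. By Proposition~\ref{prop:corr}, $\sim$ corresponds to a congruence $\sim_\cF$ on $\cF$, and this correspondence preserves both triviality and the partial order on congruences. Consequently, the maximality of $\sim$ among cancellative congruences on $\cR$ yields the maximality of $\sim_\cF$ among all congruences of $\cF$: any congruence of $\cF$ strictly above $\sim_\cF$ would restrict to a strictly larger cancellative congruence on $\cR$, hence, by maximality, to the trivial one, which pulls back to the trivial congruence on $\cF$.

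Next, consider the closure $\overline{\sim_\cF}$ of $\sim_\cF$ in $\widehat{\cF}\times\widehat{\cF}$. A verbatim adaptation of the argument used in Corollary~\ref{cor:ked} shows that $\overline{\sim_\cF}$ is a closed congruence on the Banach semi-field $\widehat{\cF}$ and that it is non-trivial: otherwise $E$ would lie in the closure of the class of $0$, and the $E$-absorption argument of Corollary~\ref{cor:ked}, based on Lemma~\ref{lem:cong}.3], would force $\sim_\cF$ itself to be trivial. I then apply Theorem~\ref{thm:max} to extend $\overline{\sim_\cF}$ to a maximal congruence $\widehat{\sim}$ on $\widehat{\cF}$, and Theorem~\ref{thm:gm} to obtain a normalized character $\phi : \widehat{\cF} \to \R$ (that is, $\phi(E)=1$) whose kernel congruence is $\widehat{\sim}$.

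Restrict $\phi$ to $\cR$ and define the cancellative congruence $\sim_\phi$ on $\cR$ by $X \sim_\phi Y \Leftrightarrow \phi(X)=\phi(Y)$. The chain $\sim \,\leq\, \sim_\cF \,\leq\, \overline{\sim_\cF} \,\leq\, \widehat{\sim}$ shows $\sim \,\leq\, \sim_\phi$; since $\phi(0)=0 \neq 1 = \phi(E)$, $\sim_\phi$ is non-trivial, so the maximality of $\sim$ forces $\sim = \sim_\phi$. Hence $\phi$ induces an injective homomorphism of semi-rings $\cR/\sim \hookrightarrow (\R,\max,+)$. Because the $\Q^{+*}$-action on $\cR$ of Lemma~\ref{lem:alain} extends by continuity to an $\R^{+*}$-action on the Banach semi-ring $\cR$ and $\phi$ is $1$-Lipschitz (by the analogue of Lemma~\ref{lem:char}.1] for $\widehat{\cF}$), one has $\phi(tE)=t$ for every $t \in \R^{+*}$, so $\phi(\cR)\supset \{0\}\cup \R^{+*}=\R^+$.

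The two cases follow at once. In Case~1, the hypothesis $0 \leq X_1$ for every $X_1 \in \cR/\sim$ translates into $\max(0,\phi(X))=\phi(X)$ for every $X\in\cR$, i.e.\ $\phi(\cR) \subset \R^+$; combined with the reverse inclusion this gives $\phi(\cR)=\R^+$ and the stated isomorphism with $(\R^+,\max,+)$ via $\lambda \pi_0(E) \mapsto \lambda$. In Case~2, some $X_1$ fails $0\oplus X_1 = X_1$, which means there exists $X\in\cR$ with $\phi(X)<0$; closure of $\phi(\cR)$ under the $\R^{+*}$-action and under $+$ then yields $\R^{-*} \subset \phi(\cR)$, whence $\phi(\cR)=\R$ and the stated isomorphism (and in particular $\cR/\sim$ is then a semi-field, as $\R$ is a group under $+$). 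I expect the main obstacle to lie in the second paragraph: cleanly verifying that $\overline{\sim_\cF}$ really is a non-trivial closed congruence on $\widehat{\cF}$, since this is the only step that genuinely uses the Banach structure of $\cR$ and is not a formal consequence of Proposition~\ref{prop:corr} together with the already-established Theorems~\ref{thm:max} and \ref{thm:gm}.
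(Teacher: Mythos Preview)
Your proposal is correct and follows essentially the same route as the paper: pass from $\sim$ to the maximal congruence $\sim_\cF$ on $\cF$ via Proposition~\ref{prop:corr}, take its closure in the Banach completion $\widehat{\cF}$ and check non-triviality by the $E$-absorption argument of Corollary~\ref{cor:ked}, extract a normalized character $\phi$ (the paper packages your use of Theorems~\ref{thm:max} and \ref{thm:gm} into a single appeal to Proposition~\ref{prop:=}), and then use maximality to obtain an injective map into $\R$ whose image contains $\R^+$. The only cosmetic difference is that the paper works at the level of $\cF/\sim_\cF$ (first proving an intermediate lemma that $\cH$ is already closed in $\cF$, hence an $\R$-subspace) and then embeds $\cR/\sim$ in $\cF/\sim_\cF\simeq\R$, whereas you argue directly with $\sim_\phi$ on $\cR$ and invoke the $1$-Lipschitz continuity of $\phi$ to get $\phi(tE)=t$; both reach the same dichotomy in the same way.
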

\begin{proof} By Proposition \ref{prop:corr}, $\sim_\cF$ is a maximal congruence of $\cF$. Denote by $\cH$ the class of $0$ 
for $\sim_\cF$ in $\cF$. 
A priori, $\cH$ is only a $\Q-$vector space.
\begin{lemma} $\cH $ is a closed subset of $\cF$ (endowed with the norm $r(X)$). In particular,  $\cH $ is a $\R-$sub vector space of $\cF$ and $\cF/ \sim_\cF$ 
is naturally endowed with the structure of a $\R-$vector space.
\end{lemma} 
\begin{proof} By applying Lemma \ref{lem:cong}.2] to $\cH$ and using sequences, one checks that 
the closure $\overline{\cH}$ of $\cH$ in $\cF$ satisfies the conditions of Lemma \ref{lem:cong}.2]. Therefore, one defines 
a congruence $\overline{\sim_\cF}$ on $\cF$  by saying that $X \, \overline{\sim_\cF} \, Y$ iff $X-Y \in \overline{\cH}$. 
By maximality of $\sim_\cF$, either $\overline{\sim_\cF} = \sim_\cF$ or $\overline{\sim_\cF}$ is trivial. Suppose, by the contrary, 
that $\overline{\sim_\cF}$ is trivial, in other words, $\overline{\cH}= \cF$. Then proceeding as in the proof of Corollary \ref{cor:ked} 
one obtains that for all $t \in \Q$, $ t E \in \cH$. By Lemma \ref{lem:cong}.3] and Assumption \ref{ass:c}, this  implies that $\cH= \cF$. 
But this contradicts the maximality of $\sim$ and $\sim_\cF$. So, $\cH$ is closed and the rest of the Lemma follows easily.
\end{proof}
Now, denote by $\widehat{\cH}$ the closure of $\cH$ in $\widehat{\cF}$. Proceeding as in the proof of the previous Lemma,  
 one first defines a congruence $\sim_1$ on $\widehat{\cF}$ by saying that $X \sim_1 Y$ iff
$X- Y \in \widehat{\cH}$. Second, one checks (just as before) that  $\widehat{\cH}$ cannot be equal to $\widehat{\cF}$. 
Therefore, by Lemma \ref{prop:rel},  $\sim_1$ is a non trivial closed congruence on $\widehat{\cF}$. By Proposition \ref{prop:=}  applied 
to $\widehat{\cH}$ and $\sim_1$, there exists a character $\phi \in S_E( \widehat{\cF})$ such that 
$X\, \sim_1 \, Y $ implies $\phi(X) = \phi(Y)$. Denote by  $\pi: \cF \rightarrow \cF/ \sim_\cF$  the ($\R-$linear) projection and recall that by Lemma \ref{lem:char}.2], $\phi$ is $\R-$linear.
Since $\cH \subset \widehat{\cH}$, the restriction of $\phi$ to $\cF$ induces 
a $\R-$linear semi-field homomorphism: 
$$\chi: (  \cF / \sim_\cF , \oplus, +) \rightarrow (\R, \max ( , ), +) \, ,$$ such that $\chi \circ \pi = \phi_{| \cF}$ and 
$\chi (\pi (E))=1$. In particular, $\chi$ is not trivial.
By    the maximality of  $\sim_\cF$, $\chi$ is necessarily an injective homomorphism,  it  then  induces 
an isomorphism $\cF / \sim_\cF \, \simeq \,\R$. 

Now consider $X,Y \in \cR $ such that $X \sim Y$. 
Using  Proposition \ref{prop:corr}.1] and the fact that $\cH $ is a $\R-$sub vector space of $\cF$, one obtains that
for any $ \lambda \in \R^+$, $ \lambda X \sim_\cF  \lambda Y$ and thus  $ \lambda X \sim \lambda Y$. Therefore 
$\R^{+*}$ acts naturally on $\cR / \sim \,$   so that the projection $\pi_0: \cR \rightarrow \cR / \sim $ becomes
 $\R^{+*}-$equivariant  and $ \cR / \sim $ contains the half line $\R^+ \pi_0(E)$. 

But, by Proposition \ref{prop:corr}.3], 
$\cF / \sim_\cF\; ( \simeq \R )$ is the semi-field of fractions of $ \cR / \sim $.  
Therefore, if $$ \cR / \sim \, \, \subset \{ X_1 \in \cF / \sim_\cF\,, \;  0 \leq X_1\}\,( \simeq \R^+)\,,$$ then we are in the situation of Part 1]. 
If not, we are in the case of Part 2]. 
One then obtains right away the Theorem.
\end{proof}


%
The example of the Banach semi-ring of all the convex functions $f: [0, 1] \rightarrow \R$ such that $f(1) \geq 0$ 
show that in the previous theorem the two cases indeed can occur. The congruence defined by  $f(1)= g(1)$ 
corresponds to the first case whereas the congruence defined by $f(1/2)=g(1/2)$  corresponds to the second one.

\s 
We end this Section with a special case.
The following Theorem gives a precise characterization of the Banach semi-rings $\cR$ which 
coincide with the non negative part of their semi-field of fractions.

\begin{theorem} \label{thm:banring} Assume moreover that $\cR= \{ X \in \cF / \, 0 \leq X\}$. Then:
\item 1]   The semi-field  of fractions $\cF$ of $\cR$, is a Banach semi-field with respect to the distance  $(X,Y) \mapsto r(X-Y)$.
\item 2] With the notations of Theorem \ref{thm:Tilby}, one has an isometric isomorphism of semi-rings:
$$
\Theta: (\cR, \oplus, +)  \rightarrow (C^0(S_E(\cF), [0, +\infty[), {\max}\,( ,), +)
$$
$$
X \mapsto \Theta_X: \phi \mapsto \phi(X)= \Theta_X (\phi)
$$  
$$\forall X \in \cR, \; r(X)= \sup_{ \phi \in S_E(\cR)} | \Theta_X (\phi) |\,.
$$ 
\end{theorem}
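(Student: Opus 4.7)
\textbf{Proof plan for Theorem \ref{thm:banring}.}

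For Part 1], the plan is to show that a Cauchy sequence $(X_n)$ in $(\cF, r(\cdot-\cdot))$ splits into two Cauchy sequences in $\cR$, which converge by Assumption \ref{Ass:comp}. Write $X_{n,+} = 0 \oplus X_n$ and $X_{n,-} = 0 \oplus (-X_n)$; by Lemma \ref{lem:oplus}.3] one has $X_n = X_{n,+} - X_{n,-}$, and by hypothesis both $X_{n,+}$ and $X_{n,-}$ belong to $\cR$. Applying Lemma \ref{lem:cont}.1] with $X' = Y = Y' = 0$ replaced suitably gives
$$
r(X_{n,+} - X_{m,+}) = r(0 \oplus X_n - 0 \oplus X_m) \leq \max(r(0), r(X_n - X_m)) = r(X_n - X_m),
$$
and similarly for $(X_{n,-})$ (using $r(-Z)=r(Z)$ from Lemma \ref{lem:dist}.3]). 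Hence both sequences are Cauchy in the Banach semi-ring $\cR$ and converge to some $A, B \in \cR$, and then $X_n \to A - B$ in $\cF$. This proves $\cF$ is a Banach semi-field.

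For Part 2], now that $\cF$ is Banach one applies Theorem \ref{thm:Tilby} to get an isometric isomorphism $\Theta : (\cF, \oplus, +) \to (C^0(S_E(\cF),\R), \max, +)$ with $r(X) = \sup_{\phi \in S_E(\cF)} |\Theta_X(\phi)|$. It then suffices to show that $\Theta$ maps $\cR$ bijectively onto the cone $C^0(S_E(\cF), [0,+\infty[)$. For the inclusion $\Theta(\cR) \subset C^0(S_E(\cF), [0,+\infty[)$, observe that any $\phi \in S_E(\cF)$ satisfies $\phi(0)=0$ (since $\phi(0)=\phi(0)+\phi(0)$) and is nondecreasing by Lemma \ref{lem:char}.1]; hence $X \in \cR$, i.e.\ $0 \leq X$, implies $\Theta_X(\phi) = \phi(X) \geq \phi(0) = 0$. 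For surjectivity, given $f \in C^0(S_E(\cF), [0,+\infty[)$, Theorem \ref{thm:Tilby} produces $X \in \cF$ with $\Theta_X = f$; since $\phi(X) = f(\phi) \geq 0$ for every $\phi \in S_E(\cF)$, Proposition \ref{prop:cons}.1] (applicable now that $\cF$ is Banach) gives $0 \leq X$, so $X \in \cR$. Injectivity and the isometry formula are inherited from Theorem \ref{thm:Tilby}, and under the identification $S_E(\cR) \simeq S_E(\cF)$ established in Section 6.2, the displayed formula $r(X) = \sup_{\phi \in S_E(\cR)} |\Theta_X(\phi)|$ follows.

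The main obstacle is Part 1], because the conclusion that $\cF$ itself is Banach does not follow automatically from $\cR$ being Banach (in general one only expects $\cF$ to sit densely in a Banach completion $\widehat{\cF}$, as in the paragraph preceding Theorem \ref{thm:gmring}). The special hypothesis $\cR = \{X \in \cF : 0 \leq X\}$ is exactly what makes the positive/negative decomposition $X = X_+ - X_-$ live inside $\cR \times \cR$, and the ultrametric inequality of Lemma \ref{lem:cont}.1] is what keeps this decomposition Lipschitz with respect to $r$. Once completeness of $\cF$ is in hand, Part 2] is a direct application of the classification theorem combined with Proposition \ref{prop:cons}.1].
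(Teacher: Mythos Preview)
Your proposal is correct and follows essentially the same route as the paper: for Part~1] you use the decomposition $X_n = (0\oplus X_n) - (0\oplus(-X_n))$ into elements of $\cR$ and show each piece is Cauchy, and for Part~2] you apply Theorem~\ref{thm:Tilby} to the now-Banach $\cF$ together with Proposition~\ref{prop:cons}.1]. The only difference is cosmetic: the paper establishes the Cauchy estimate for $(0\oplus X_n)$ by an explicit manipulation with $\oplus$ and distributivity, whereas you invoke Lemma~\ref{lem:cont}.1] directly (with $X=X'=0$, $Y=X_n$, $Y'=X_m$), which is cleaner and yields the sharper bound $r(X_{n,+}-X_{m,+})\le r(X_n-X_m)$.
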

\begin{proof} 1]  
Recall that $\cF$ is perfect since $\cR$ is.
 Consider now  a Cauchy sequence $(X_n)_{n \in \N}$ of points of $\cF$. There exists a sequence of positive rational numbers
 $(t_n)$ such that $\displaystyle \lim_{n \rightarrow + \infty} t_n = 0$ and:
 \begin{equation} \label{eq:eps}
 \forall n, p  \in \N , \; r(X_n - X_{n+p})\, \leq \, t_n / 2\, \,.
\end{equation}
 
 Set for all $n \in \N:$
 $$
 A_n = 0 \oplus X_n ,\; B_n=   0 \oplus ( -X_n) \,.
 $$  The equality $\cR= \{ X \in \cF / \, 0 \leq X \}$ shows that   $(A_n)_{n\in \N}$ is a sequence of points of $\cR$. 
 The inequality  \eqref{eq:eps} implies that $ X_n \oplus (X_{n+p} + t_n E)= X_{n+p} + t_n E$. By applying 
 $\oplus 0 \oplus t_n E$ to this equality one obtains:
 $$
0 \oplus X_n \oplus (X_{n+p} + t_n E) \oplus t_n E = 0 \oplus ( X_{n+p} + t_n E ) \oplus t_n E\,. 
 $$ Using the distributivity of $+$, one sees that this last equality implies: 
 $$
 0 \oplus X_n \oplus  \bigl( ( 0 \oplus X_{n+p}) + t_n E \bigr) = 0 \oplus \bigl( ( 0 \oplus X_{n+p}) + t_n E \bigr) = 
  ( 0 \oplus X_{n+p}) + t_n E \,.
 $$ Indeed, both $0 \oplus X_{n+p}$ and $t_n E$ are $\geq 0$.
 In other words, one has $ A_n \leq A_{n+p} + t_n E$. Similarly, one obtains 
 $-t_n E +A_{n+p} \leq A_n$. To summarize, we have proved that $r( A_n - A_{n+p} ) \leq t_n$. 
 Therefore, the Cauchy sequence $(A_n)$ converges to $ A \in \cR$. Similarly, 
 the sequence $(B_n)$ converges to $ B \in \cR$. This implies that $(X_n)$ converges to 
 $A-B \in \cF$. This proves that $\cF$ is a Banach semi-field. 
 
 
 2] Since $\cR= \{ X \in \cF / \, 0 \leq X \}$, the result is
 a consequence of Theorem \ref{thm:Tilby} applied to $\cF$ and of Proposition \ref{prop:cons}.1].  
 
 \end{proof} 
\begin{remark}  Of course the weaker hypothesis $\cR  \subset \{ X \in \cF / \, 0 \leq X\,\}$  is not sufficient for the  previous Theorem to hold.  Let us mention  two   counter-examples.  Consider  the semi-ring of   all the convex functions  
from $[0,1]$ to $\R^+$  or  the one of all the continuous functions $f: [0,1] \rightarrow \R^+$ satisfying $\forall t \in [0,1], \,  f(1/2) \leq f(t)$.
\end{remark}

\s 
The next Proposition shows that for such  Banach semi-rings the  correspondence 
$\sim \, \mapsto \sim_\cF$ of Proposition \ref{prop:corr}   extends nicely to the closure. Since we shall not use it in  this paper, we leave the
 proof to the reader.

\begin{proposition} \label{prop:cl} Let $\sim$ be a cancellative congruence of $\cR = \{ X \in \cF / \, 0 \leq X \}$. 
 Then the closure $\overline{\sim}$, in $\cR \times \cR$, of $\sim$ 
defines also a cancellative congruence. With the notations of Proposition \ref{prop:corr}, one 
has $\overline{\sim}_\cF\, = \, \overline{\sim_\cF}$.

 Therefore, the congruence $\sim$ is closed if and only if $\sim_\cF$ is  a closed congruence of $\cF$.
\end{proposition}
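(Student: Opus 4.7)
The plan is to identify the topological closure $\overline{\sim}$ in $\cR \times \cR$ with the restriction to $\cR \times \cR$ of the closed congruence $\overline{\sim_\cF}$ on $\cF$ (which is a congruence by Lemma \ref{prop:rel}, applied to the Banach semi-field $\cF$ provided by Theorem \ref{thm:banring}.1]). Once this identification is in hand, both the cancellativity of $\overline{\sim}$ and the equality $\overline{\sim}_\cF = \overline{\sim_\cF}$ follow essentially for free.

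The easy direction $\overline{\sim} \subseteq \overline{\sim_\cF} \cap (\cR \times \cR)$ uses that $\cR = \{X \in \cF \,/\, 0 \oplus X = X\}$ is closed in $\cF$ by continuity of $\oplus$ (Lemma \ref{lem:cont}), so $\cR \times \cR$ is closed in $\cF \times \cF$; any approximating sequence $(A_n, B_n)$ with $A_n \sim B_n$ satisfies $A_n \sim_\cF B_n$, hence its limit lies in $\overline{\sim_\cF}$.

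The core step is the reverse inclusion. Given $A,B \in \cR$ with $H := A - B \in \overline{\pi_{\sim_\cF}(0)}$, I would pick $H_n \in \pi_{\sim_\cF}(0)$ converging to $H$ in $\cF$, and set
$$
B_n \,:=\, 0 \oplus (A - H_n) \,=\, (A \oplus H_n) - H_n \,\in\, \cR,
$$
the second equality coming from distributivity of $+$ over $\oplus$. Since $B \geq 0$ forces $H \leq A$ and hence $A \oplus H = A$, continuity of the operations gives $B_n \to 0 \oplus (A - H) = 0 \oplus B = B$. Because $\sim_\cF$ is a congruence of the semi-field $\cF$, from $H_n \sim_\cF 0$ one deduces successively $-H_n \sim_\cF 0$, then $A - H_n \sim_\cF A$, and finally $0 \oplus (A - H_n) \sim_\cF 0 \oplus A = A$. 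As $A, B_n \in \cR$, Proposition \ref{prop:corr}.1] upgrades this to $B_n \sim A$ in $\cR$; each pair $(A, B_n)$ thus lies in $\sim$, and the sequence converges in $\cR \times \cR$ to $(A, B)$, witnessing $(A, B) \in \overline{\sim}$.

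Once $\overline{\sim} = \overline{\sim_\cF} \cap (\cR \times \cR)$ is established, the congruence properties and cancellativity of $\overline{\sim}$ are inherited from $\overline{\sim_\cF}$, which is a congruence of a semi-field (cancellativity via $-X \sim -Y$ from Definition \ref{def:congF}, which lets one subtract a common summand). Next, by Proposition \ref{prop:corr}.1], the class of $0$ for $\overline{\sim}_\cF$ is $\{A - B \,/\, A, B \in \cR,\; A \overline{\sim} B\} = \{A - B \,/\, A, B \in \cR,\; A - B \in \overline{\pi_{\sim_\cF}(0)}\}$, and this equals the whole $\pi_{\overline{\sim_\cF}}(0) = \overline{\pi_{\sim_\cF}(0)}$: indeed, any $H$ in this set decomposes as $H = (0 \oplus H) - (0 \oplus (-H))$ by Lemma \ref{lem:oplus}.3], realizing it as a difference of two elements of $\cR$. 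The main obstacle is precisely the construction of $B_n$: the naive choice $B_n = A - H_n$ satisfies $A - B_n = H_n \in \pi_{\sim_\cF}(0)$ immediately, but may fall outside $\cR$; passing to its positive part $0 \oplus (A - H_n)$ keeps it in $\cR$ without destroying the $\sim_\cF$-relation with $A$ (because $\oplus$ is congruence-compatible), and yields the correct limit $B$ thanks to $A - H = B \geq 0$.
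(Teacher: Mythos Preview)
Your argument is correct. The paper itself leaves the proof of this proposition to the reader, so there is no in-text proof to compare against; your approach---identifying $\overline{\sim}$ with $\overline{\sim_\cF}\cap(\cR\times\cR)$ via the positive-part trick $B_n=0\oplus(A-H_n)$, then reading off both cancellativity (via Proposition~\ref{prop:corr}.2]) and the equality of zero classes using Lemma~\ref{lem:oplus}.3]---is exactly the kind of direct verification the author had in mind.
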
 
\m 



\section{ Foundations for a new theory of schemes in characteristic $1$. }$\;$


\s

  We fix $(\cR, \oplus, +)$ a  commutative perfect cancellative semi-ring of characteristic $1$.  For instance, $\cR$ 
could be the semi-ring $\cR_c( [0,1])$ of all the  piecewise affine convex functions  $[0,1] \rightarrow \R$, 
 its semi-field of fractions being denoted $\cF_c ([0,1])$. We could also consider the semi-ring $\cC_K $ of Example 
\ref{ex:nf}.

In this Section  we adopt a more algebraic view point. Our goal is to propose the foundations of a new scheme theory in characteristic $1$ which should allow, in some sense,  to decide whether 
an element of   $\cF$ belongs or not to $\cR$. We would like also to detect sub semi-rings of $\cR$ which have some arithmetic flavor.
A basic example is provided by 
  $\cF_c( [0,1])$, $\cR_c( [0,1])$ and the semi-ring of functions $ x \mapsto \max_{1 \leq j \leq n} (a_j x + b_j)$ 
  where the $a_j, b_j$ belong to a sub field $\K$ of $\R$.
  

 {\bf We shall assume in this Section that   $\cR$ is a semi-ring as in Definition \ref{def:gen}, which  satisfies the following:   }
\begin{assumption} \label{ass:last} 
\item 1] The semi-field of fractions $\cF$  of $\cR$ satisfies Assumptions 1 and 2.
\item 2]  The action of  $\, \Q^{+*}$ on $(\cR, \oplus, +) $ and $(\cF, \oplus, +) $ defined in Lemma \ref{lem:alain} 
extends to $\R^{+*}$ so that $\cF$ becomes a $\R-$vector space. 

\item 3] For any real $t$, $t E \in \cR$.
\end{assumption} 

The requirement in 3] that $-E \in \cR$ is harmless with respect to the goal of this Section. Indeed,  notice  for instance that the constant function {\bf -1} 
belongs to $\cR_c([0,1])$.

\m
\subsection{A Topology of Zariski Type on $S_E (\cF)$.} $\;$

\s

 Recall that a congruence on $\cF$  is the analogue of the notion of Ideal 
 in Ring Theory. 
We shall denote a congruence by $r$ rather than $\sim$ in order to simplify the notations, indeed several of them will involve indices.

\begin{definition} \label{def:V}
Let $r$ be a congruence on $\cF$ and $\pi_r: \cF \rightarrow \cF/r$  the natural projection.
 Denote by $V(r)$ the set of elements $\phi \in S_E (\cF)$ which 
 can be written as  $\phi= \xi \circ \pi_r$ for a suitable homomorphism of semi-fields 
 $\xi: \cF/r \rightarrow \R$. In other words, $\phi \in V(r)$ if and only if
 $$ \forall X, Y \in \cF, \;  X\, r \, Y \, \Rightarrow 
 \phi(X)=\phi(Y) \,.
 $$ Lastly, we shall denote by $[0]_r$ the class of $0$ of $r$ in $\cF$.
\end{definition}

\begin{remark}
 Since by  Assumption \ref{ass:last}.2], any real $t>0$ induces an automorphism $X \mapsto t X$ of $(\cF, \oplus, +)$, one obtains easily  that:
 $$
 \forall (\phi, \lambda, X) \in S_E (\cF) \times \R \times \cF, \;\, \phi ( \lambda X) = \lambda \phi(X)\,.
 $$
\end{remark}

\s

Observe that $V(r)$ is the analogue for $\cF$ 
 of the set $V(\mathcal{{I}})$ of prime ideals of a ring $A$ containing the ideal $\mathcal{I}$ (see \cite{Ha}[page70]).
If $r$ is the trivial (resp. identity) congruence then $V(r) = \emptyset$ (resp. $V(r) = S_E (\cF)$).
\s 
\begin{proposition} Let $r$ be a non trivial congruence on $\cF$, then 
$V(r)$ is not empty.
\end{proposition}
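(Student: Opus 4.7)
The plan is to reduce the problem to the Banach setting by passing to the completion $\widehat{\cF}$ of $\cF$ for the distance $r(X-Y)$, mimicking the strategy used in the proof of Corollary \ref{cor:ked}. By Assumption \ref{ass:last}.1] combined with the discussion preceding Lemma \ref{lem:extchar}, $\widehat{\cF}$ inherits the structure of a commutative Banach perfect semi-field of characteristic $1$, and the restriction map $S_E(\widehat{\cF}) \to S_E(\cF)$ is a homeomorphism.

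First, set $\cH = [0]_r \subset \cF$ and let $\overline{\cH}$ be its closure in $\widehat{\cF}$. Using the continuity of $\oplus$ and $+$ on $\widehat{\cF}$ (Lemma \ref{lem:cont} applied to $\widehat{\cF}$) together with sequential arguments, I would verify that $\overline{\cH}$ is a sub semi-field of $\widehat{\cF}$ satisfying the absorption condition \eqref{eq:s} of Lemma \ref{lem:cong}.2]. Hence the relation $\overline{r}$ defined on $\widehat{\cF}$ by $X \,\overline{r}\, Y \Leftrightarrow X - Y \in \overline{\cH}$ is a closed congruence on $\widehat{\cF}$. This step is a verbatim repetition of the argument already carried out in the proof of Corollary \ref{cor:ked}.

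The crucial step is to prove that $\overline{r}$ is non trivial, i.e.\ $E \notin \overline{\cH}$. Suppose for contradiction that $E$ lies in the closure of $\cH$ in $\widehat{\cF}$. Then there would exist $H \in \cH$ with $r(H-E) \leq 1/2$, hence by definition of $r$ we would have $-\tfrac{1}{2} E \leq H - E \leq \tfrac{1}{2} E$ in $\cF$, so $0 \leq \tfrac{1}{2} E \leq H$. Since $0, H \in \cH$, Lemma \ref{lem:cong}.3] forces $\tfrac{1}{2} E \in \cH$. By Proposition \ref{prop:alg}.2], $\cH$ is a $\Q$-subvector space of $\cF$, so $tE \in \cH$ for every $t \in \Q$. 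Finally, Assumption 1 on $\cF$ (which holds by Assumption \ref{ass:last}.1]) combined with a second application of Lemma \ref{lem:cong}.3] shows that every $X \in \cF$ lies in $\cH$, contradicting the non-triviality of $r$.

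Once $\overline{r}$ is known to be a non-trivial closed congruence on the Banach semi-field $\widehat{\cF}$, Proposition \ref{prop:=} (whose proof rests on Theorems \ref{thm:max} and \ref{thm:gm}) produces a character $\phi \in S_E(\widehat{\cF})$ such that $X \,\overline{r}\, Y$ implies $\phi(X) = \phi(Y)$. Identifying $S_E(\widehat{\cF})$ with $S_E(\cF)$ via Lemma \ref{lem:extchar}, and noting that $\cH \subset \overline{\cH}$ forces $X\, r\, Y \Rightarrow X\, \overline{r}\, Y$, the restriction of $\phi$ to $\cF$ is a character lying in $V(r)$. The main obstacle of the argument is the non-triviality step: the passage from an approximation $r(H-E) < 1/2$ happening in $\widehat{\cF}$ to a genuine membership $\tfrac{1}{2} E \in \cH$ living in $\cF$ is exactly what Lemma \ref{lem:cong}.3] delivers, and this sandwiching property combined with the $\Q$-linear structure of $\cH$ and Assumption 1 is the whole engine driving the contradiction.
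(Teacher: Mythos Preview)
Your proposal is correct and follows the same route as the paper, whose proof is a one-line citation of Corollary~\ref{cor:ked} applied with $\cF$ in place of $\cR$ (legitimate since every congruence on a semi-field is automatically cancellative); you have simply unpacked that corollary's argument in full. One tiny imprecision worth fixing: from $E \in \overline{\cH}$ you should choose $H \in \cH$ with $r(H-E) < 1/2$ \emph{strictly}, so that some rational $t < 1/2$ witnesses $-tE \leq H - E \leq tE$ and hence $0 \leq (1-t)E \leq H$; the step ``$r(H-E) \leq 1/2$ hence $-\tfrac{1}{2}E \leq H-E \leq \tfrac{1}{2}E$'' would require a continuity argument for $t \mapsto tE$ that is not directly available in this non-complete setting.
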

\begin{proof} This is a direct consequence of  Corollary \ref{cor:ked}  applied with $\cF$ instead of $\cR$.
\end{proof}
\s
 
  We now recall two standards operations on the congruences.

\begin{definition} Let $(r_j)_{j\in J}$ be a family of  congruences on $\cF$.
\item 1]  Denote by $ \vee_{_{j\in J}} r_j$ the intersection of all the congruences $r$ satisfying the following condition:
$$
\forall X, Y \in \cF,\; \; \bigl( \exists j \in J,\; X \,r_j \,Y \bigr) \, \Rightarrow X \,r \,Y \,. 
$$
\item 2]  Denote by $\wedge_{_{j\in J}} r_j$ the congruence defined by:
$$
\forall X, Y \in \cF,\; X \,( \wedge_{_{j\in J}} r_j ) \,Y\; \,{\rm iff}\,\, \forall j \in J,\; X \,r_j\, Y\,.
$$
\end{definition}

\s
The following Proposition gives a precise description of  $ \vee_{_{j\in J}} r_j$ and $\wedge_{_{j\in J}} r_j$.

\begin{proposition} \label{prop:vee} Let $(r_j)_{j\in J}$ be a family of  congruences on $\cF$. Then:
\item 1]  The class of $0$ for $ \vee_{_{j\in J}} r_j$ 
is $\sum_{j \in J} [0]_{r_j}$, the sum of the  classes of zero for each  $r_j$. In other words:
$$
\forall X, Y \in \cF,\; X \, \vee_{_{j\in J}} r_j \, Y \, \Leftrightarrow X- Y \in \sum_{j \in J} [0]_{r_j}\,.
$$
\item 2] The class of $0$ for $\wedge_{_{j\in J}} r_j$ is $\cap_{j \in J} [0]_{r_j}$.
\end{proposition}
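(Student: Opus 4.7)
Part 2] will be immediate. By Lemma \ref{lem:cong}.1], for any congruence $r_j$ one has $X\, r_j\, Y \Leftrightarrow X - Y \in [0]_{r_j}$. So $X\, (\wedge_j r_j)\, Y$, meaning $X\, r_j\, Y$ for every $j$, is equivalent to $X - Y \in \bigcap_j [0]_{r_j}$. That set is a subgroup of $(\cF, +)$ and inherits the semi-field and absorption properties from each $[0]_{r_j}$, so Lemma \ref{lem:cong}.2] confirms that the class of $0$ for $\wedge_j r_j$ is indeed $\bigcap_j [0]_{r_j}$.

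For part 1], the plan is to define a relation $R$ on $\cF$ by $X\, R\, Y$ iff $X - Y \in \cH := \sum_{j \in J} [0]_{r_j}$ and show (a) $R$ is a congruence, (b) each $r_j$ is contained in $R$, (c) $R$ is the smallest such congruence. Granted (a)--(c), $R$ must equal $\vee_{j\in J} r_j$ and its class of $0$ is $\cH$ by construction. Points (b) and (c) are essentially formal: (b) holds because $X\, r_j\, Y$ means $X - Y \in [0]_{r_j} \subset \cH$; and for (c), if $r'$ contains every $r_j$ then $[0]_{r_j} \subset [0]_{r'}$ for all $j$, and since $[0]_{r'}$ is an additive subgroup (Lemma \ref{lem:cong}.1]) the sum $\cH$ lies in $[0]_{r'}$.

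The substantive step is (a), and the main obstacle is checking the absorption identity \eqref{eq:s} of Lemma \ref{lem:cong}.2] for $\cH$; the additive-subgroup part is immediate since the sum of subgroups is a subgroup. The plan is to prove absorption by induction on the number $n$ of summands needed to express $H \in \cH$. The base case $n=1$ is just \eqref{eq:s} applied inside the single $[0]_{r_j}$. For the inductive step, write $H = H_1 + H_2$ with $H_1 \in [0]_{r_{j_1}}$ and $H_2 \in \sum_{k \ne 1} [0]_{r_{j_k}}$; given $X, Y \in \cF$, apply \eqref{eq:s} to $r_{j_1}$ with the triple $(X + H_2, Y, H_1)$ to get $H_1' \in [0]_{r_{j_1}}$ with
\[
(X + H_2 + H_1) \oplus Y \;=\; \bigl((X + H_2) \oplus Y\bigr) + H_1',
\]
then apply the inductive hypothesis to $(X, Y, H_2)$ to obtain $H_2' \in \sum_{k \ne 1} [0]_{r_{j_k}}$ with $(X + H_2) \oplus Y = (X \oplus Y) + H_2'$. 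Adding gives $(X + H) \oplus Y = (X \oplus Y) + (H_1' + H_2')$, and $H_1' + H_2' \in \cH$, which is exactly the absorption property.

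Once (a) is established, Lemma \ref{lem:cong}.2] produces the congruence $R$, and the combination of (a)--(c) yields Proposition \ref{prop:vee}.1].
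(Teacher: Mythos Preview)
Your argument follows the same route as the paper's: verify by induction on the number of summands that $\cH=\sum_{j\in J}[0]_{r_j}$ satisfies the hypotheses of Lemma~\ref{lem:cong}.2], conclude that $X-Y\in\cH$ defines a congruence, then check minimality. Your inductive step for the absorption identity~\eqref{eq:s} is exactly the computation the paper alludes to.

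There is one small omission. Lemma~\ref{lem:cong}.2] requires $\cH$ to be a \emph{sub semi-field}, i.e.\ closed under $\oplus$ as well as $+$; you only note that $\cH$ is an additive subgroup. The paper handles this separately, observing (again by induction, finishing with $0\oplus 0=0$) that $\sum_{j\in F}[0]_{r_j}$ is $\oplus$-stable. In fact $\oplus$-closure follows from the absorption property you have already established: for $H_1,H_2\in\cH$, apply~\eqref{eq:s} with $(X,Y,Z)=(0,0,H_2)$ to get $0\oplus H_2\in\cH$, then with $(X,Y,Z)=(0,H_2,H_1)$ to get $H_1\oplus H_2=(0\oplus H_2)+Z_1\in\cH$. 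Adding one such sentence would make your appeal to Lemma~\ref{lem:cong}.2] airtight.
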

\begin{proof}
1] Consider a finite sub family of $F$ of $J$ and set $\cH_j= [0]_{r_j}$.
Using Lemma \ref{lem:cong}.2] for each $\cH_j$,  an easy induction and $0 = 0 \oplus 0$ at the end, one checks that 
$\sum_{j \in F} \cH_j$ is stable under the law $\oplus$. In fact, $\sum_{j \in F} \cH_j$ is a sub semi-field of $\cF$.
Now, using again Lemma \ref{lem:cong}.2] for each $\cH_j$ and an induction, one checks  that 
$\sum_{j \in F} \cH_j$ satisfies the conditions of Lemma \ref{lem:cong}.2]. Therefore, $X-Y \in \sum_{j \in F} \cH_j$ 
defines a congruence on $\cF$. By an inductive limit argument, one sees that $X-Y \in \sum_{j \in J} \cH_j$ 
defines a congruence on $\cF$. It is then obvious that this congruence coincides with $\vee_{_{j\in J}} r_j$.

2] is left to the reader.

\end{proof}

\m
The next  proposition establishes, in characteristic $1$,   the analogue of the following two identities (\cite{Ha}[page 70]) 
for ideals of Ring theory:
$$ \cap_{j \in J} V(\mathcal{I}_j)\,=\, V ( \sum_{j\in J} \mathcal{I}_j)  ,\; \; V(\mathcal{I}_1) \cup V(\mathcal{I}_2) \,=\, V( \mathcal{I}_1 \, \mathcal{I}_2)\,. 
$$ Notice that the proof of the second identity becomes  harder in this setting. Indeed, there is no product in our context.

\begin{proposition} \label{prop:Vcap} \item 1] Let $(r_j)_{ j\in J}$ be a  family  of  congruences on $\cF$. Then:
$$
\cap_{j \in J} V(r_j)\,=\, V (\vee_{_{j\in J}} r_j )\,.
$$
\item 2] Let $(r_l)_{l\in L}$ be a  finite family of congruences on $\cF$. Then:
$$
\cup_{l \in L} V(r_l)\,=\, V( \wedge_{_{l\in L}} r_l)\,.
$$
\end{proposition}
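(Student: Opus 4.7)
My plan is to first reformulate membership in $V(r)$ additively: since each $\phi \in S_E(\cF)$ is additive with $\phi(0)=0$, one has $\phi \in V(r) \Leftrightarrow [0]_r \subseteq \ker \phi$, where $\ker \phi$ is an additive subgroup of $(\cF,+)$. For Part 1], the inclusion $V(\vee_j r_j) \subseteq \bigcap_j V(r_j)$ is immediate from $r_j \leq \vee_j r_j$, i.e.\ $[0]_{r_j} \subseteq [0]_{\vee_j r_j}$. Conversely, if $[0]_{r_j} \subseteq \ker \phi$ for every $j$, then since $\ker \phi$ is an additive subgroup we get $\sum_{j \in J} [0]_{r_j} \subseteq \ker \phi$; by Proposition \ref{prop:vee}.1] this sum is exactly $[0]_{\vee_j r_j}$, so $\phi \in V(\vee_j r_j)$.

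For Part 2], the inclusion $\bigcup_l V(r_l) \subseteq V(\wedge_l r_l)$ is immediate from $[0]_{\wedge_l r_l} = \bigcap_l [0]_{r_l} \subseteq [0]_{r_l}$ (Proposition \ref{prop:vee}.2]). The reverse inclusion is the main obstacle, since we cannot mimic the classical ring-theoretic argument $V(\mathcal{I}_1 \mathcal{I}_2) \subseteq V(\mathcal{I}_1) \cup V(\mathcal{I}_2)$: no product is available in our setting. I would argue by contradiction. Assume $\phi \in V(\wedge_l r_l)$ and, for every $l \in L$, pick $H_l \in [0]_{r_l}$ with $\phi(H_l) \neq 0$. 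First, I normalize each $H_l$ to be non-negative by setting $\tilde H_l = 0 \oplus H_l \oplus (-H_l)$; since $[0]_{r_l}$ is closed under $\oplus$ and under $X \mapsto -X$ (Lemma \ref{lem:cong}.1] and Definition \ref{def:congF}), one has $\tilde H_l \in [0]_{r_l}$, $\tilde H_l \geq 0$, and $\phi(\tilde H_l) = \max(0, \phi(H_l), -\phi(H_l)) = |\phi(H_l)| > 0$.

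The decisive step is to combine the $\tilde H_l$ into a single element of $\bigcap_l [0]_{r_l}$ still not killed by $\phi$, using a ``min'' built from $\oplus$. Since $L$ is finite, set
$$
M \; = \; - \bigl( \bigoplus_{l \in L} (-\tilde H_l) \bigr) \; \in \; \cF.
$$
Because each $-\tilde H_l \leq 0$ (Lemma \ref{lem:oplus}.2]) and $\oplus$ is monotone (Lemma \ref{lem:oplus}.1]), one has $\bigoplus_l (-\tilde H_l) \leq 0$, hence $M \geq 0$; likewise $-M \geq -\tilde H_l$ gives $M \leq \tilde H_l$ for every $l$. The absorption Lemma \ref{lem:cong}.3] applied to $0 \leq M \leq \tilde H_l$ (with $0, \tilde H_l \in [0]_{r_l}$) then places $M$ in every $[0]_{r_l}$, so $M \in \bigcap_l [0]_{r_l} = [0]_{\wedge_l r_l} \subseteq \ker \phi$, yielding $\phi(M) = 0$. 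On the other hand, using $\phi(\oplus) = \max$ and $\R$-linearity, $\phi(M) = -\max_l \phi(-\tilde H_l) = \min_l \phi(\tilde H_l) > 0$: contradiction. The essential novelty is that the absorption property (Lemma \ref{lem:cong}.3]), together with the existence of a min via $-((-X) \oplus (-Y))$, replaces the use of primality/products in classical commutative algebra; the finiteness of $L$ is used precisely to form this min inside $\cF$.
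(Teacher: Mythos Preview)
Your proof is correct. Part 1] matches the paper's argument, which likewise invokes Proposition \ref{prop:vee}.1].

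For Part 2], the paper takes a different route. It reduces to the case $|L|=2$ by induction and, assuming $\phi\in V(r_1\wedge r_2)\setminus(V(r_1)\cup V(r_2))$, chooses $X\,r_1\,Y$ with $\phi(X)\neq\phi(Y)$ and $X'\,r_2\,Y'$ with $\phi(X')\neq\phi(Y')$, then observes directly from the congruence axioms that
\[
(X+X')\oplus(Y+Y')\ \ r_1\wedge r_2\ \ (Y+X')\oplus(X+Y'),
\]
while an elementary real-number check shows $\max(\phi(X)+\phi(X'),\,\phi(Y)+\phi(Y'))\neq\max(\phi(Y)+\phi(X'),\,\phi(X)+\phi(Y'))$, contradicting $\phi\in V(r_1\wedge r_2)$. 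Your argument instead manufactures, via the ``min'' $M=-\bigl(\bigoplus_l(-\tilde H_l)\bigr)$, an explicit element of $\bigcap_l[0]_{r_l}$ with $\phi(M)=\min_l\phi(\tilde H_l)>0$, and the crucial step placing $M$ inside each $[0]_{r_l}$ is the absorption property of Lemma \ref{lem:cong}.3]. Your approach handles all finite $L$ at once and highlights the structural role of the order-convexity of congruence classes; the paper's approach avoids that lemma but needs the induction and a small case analysis on real numbers. One minor remark: in your last line you only use $\phi(-Z)=-\phi(Z)$ and $\phi(A\oplus B)=\max(\phi(A),\phi(B))$, both immediate from Definition \ref{def:char}, so invoking ``$\R$-linearity'' is unnecessary.
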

\begin{proof} 1] is an immediate consequence of part 1] of the previous Proposition.

\noindent 2] We prove the result for $L=\{1,2\}$, the general case following by an easy induction on Card\,$ L$.
First we prove that $V(r_1) \cup V(r_2) \subset V (r_1 \wedge r_2)\,.$ 

Denote by $P_1: \cF/( r_1 \wedge r_2)  \rightarrow \cF/r_1$ the projection. Clearly we have: 
$P_1 \circ \pi_{r_1 \wedge r_2 } = \pi_{r_1}$. 
Consider now  $\phi \in V(r_1)$, by definition one has $\phi = \xi_1 \circ \pi_{r_1}$, so that:
$$
\phi= (\xi_1 \circ P_1) \circ \pi_{r_1 \wedge r_2 }\,.
$$ Therefore, setting $\xi= \xi_1 \circ P_1$, one gets that $V(r_1) \cup V(r_2) \subset V (r_1 \wedge r_2)\,.$

Now we prove the converse. So, consider $\phi \in V (r_1 \wedge r_2)\,. $ 
Suppose, by contradiction, that 
$\phi \notin V(r_1) \cup V(r_2)$. Therefore there exists $X,Y,X',Y' \in \cF$ such that 
$X \, r_1 \, Y$, $X' \, r_2 \, Y'$ but:
\begin{equation} \label{eq:no}
\phi(X) \not= \phi(Y), \,\;  {\rm and}\,  \; \phi(X') \not= \phi(Y')\,.
\end{equation} Recall that $\phi$ takes real values.
Now, using the  Definition \ref{def:congF}  of congruences,  we observe that: 
$$
\Bigl( (X+X') \oplus (Y+Y') \Bigr) \,  r_1 \wedge r_2  \, \Bigl( (Y+X') \oplus (X+Y') \Bigr)\,.
$$ But \eqref{eq:no} immediately implies that
$$
 \phi(X) + \phi(X')\, ,  \, \phi(Y) + \phi(Y')  \,{\bf \notin }\, \{ \phi(Y) + \phi(X')\,,\, \phi(X) + \phi(Y') \}\,.
$$
This shows that:
$$
\phi \Bigl( (X+X') \oplus (Y+Y') \Bigr) = \max ( \phi(X) + \phi(X')\,,\, \phi(Y) + \phi(Y') )
$$ cannot be  not equal to
$$
\phi \Bigl( (Y+X') \oplus (X+Y') \Bigr) = \max ( \phi(Y) + \phi(X')\,,\, \phi(X) + \phi(Y') )\,.
$$ 
This contradiction of the definition of $\phi \in V (r_1 \wedge r_2)$ shows that 
$$
V (r_1 \wedge r_2) \subset V(r_1) \cup V(r_2)\,.
$$ The result is proved.
\end{proof}

\s
\begin{definition} \label{def:zar} By the previous Proposition,     the $V(r)$, where 
$r$ is any congruence on $\cF$, define the closed subsets of a topology  on 
$S_E (\cF)$. We call it the topology $\cZ$ of Zariski. 
 
 \end{definition}
 
 \s
Recall that in Definition \ref{def:topo} we introduced another topology, called $\cT$, on $S_E (\cF)$. The following Proposition compares them.
\begin{proposition}  The identity map: $(S_E (\cF), \cT) \rightarrow (S_E (\cF), \cZ)$ 
is continuous. Therefore, $(S_E (\cF), \cZ)$ is quasi-compact.
\end{proposition}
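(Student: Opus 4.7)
The plan is to show that every $\cZ$-closed subset of $S_E(\cF)$ is also $\cT$-closed; continuity of the identity map then follows immediately, and quasi-compactness of $(S_E(\cF), \cZ)$ is obtained as the continuous image of the compact space $(S_E(\cF), \cT)$ (compact by Theorem \ref{thm:comp}, which applies to $\cF$ thanks to Assumption \ref{ass:last}.1]).

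The key observation is the rewriting
$$
V(r) \;=\; \bigcap_{(X,Y)\in r}\, \{\phi \in S_E(\cF)\,/\, \phi(X) = \phi(Y)\}\,.
$$
This is immediate from Definition \ref{def:V}: $\phi$ lies in $V(r)$ iff $\phi$ factors through $\pi_r$, which happens iff $\phi(X) = \phi(Y)$ for every pair $(X,Y)$ with $X\, r\, Y$.

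Next I would observe that each set $\{\phi : \phi(X) = \phi(Y)\}$ is $\cT$-closed. Indeed, by the very definition of $\cT$ (Definition \ref{def:topo}), the evaluations $\phi \mapsto \phi(X)$ and $\phi \mapsto \phi(Y)$ are continuous on $S_E(\cF)$, hence so is their difference $\phi \mapsto \phi(X) - \phi(Y)$, and the set in question is the preimage of $\{0\}$ under this continuous real-valued map. An intersection of $\cT$-closed sets being $\cT$-closed, it follows that $V(r)$ is $\cT$-closed.

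Since the $\cZ$-closed subsets are exactly the $V(r)$'s (Definition \ref{def:zar}, legitimized by Proposition \ref{prop:Vcap}), every $\cZ$-closed set is $\cT$-closed, which is precisely the statement that $\mathrm{id}:(S_E(\cF),\cT)\rightarrow (S_E(\cF),\cZ)$ is continuous. Applying this continuous surjection to the compact space $(S_E(\cF), \cT)$ from Theorem \ref{thm:comp} yields quasi-compactness of $(S_E(\cF), \cZ)$ (we only get quasi-compactness and not compactness since $\cZ$ need not be Hausdorff, just as the Zariski topology on $\mathrm{Spec}\,A$ in classical algebraic geometry). No step here looks serious; the only subtle point worth mentioning is just that all evaluations are $\cT$-continuous by construction, so there is nothing to prove beyond unpacking the definitions.
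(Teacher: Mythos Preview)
Your proof is correct and follows essentially the same approach as the paper: both show that each $V(r)$ is $\cT$-closed and then invoke Theorem \ref{thm:comp}. The only cosmetic difference is that the paper argues directly that $V(r)^c$ is $\cT$-open via an explicit neighborhood argument, whereas you express $V(r)$ as an intersection of preimages of $\{0\}$ under the $\cT$-continuous maps $\phi \mapsto \phi(X)-\phi(Y)$; these are the same idea.
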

\begin{proof} Consider $\phi \in V(r)^c$, the complementary of a closed subset associated to a congruence $r$.
By definition, $\exists X, Y \in \cF$ such that $X \, r \, Y $ and $| \phi(X) - \phi (Y) | = 5 \delta >0$.
It is clear that if $\phi' \in S_E(\cF)$ satisfies $| \phi(X)- \phi'(X) | < \delta$ and 
$| \phi(Y)- \phi'(Y) | < \delta$ then $| \phi'(X)- \phi'(Y)| > \delta >0$. This shows that 
$V(r)^c$ contains an open neighborhood for $\cT$ of each of its point $\phi$. Therefore 
$V(r)^c$ is an open subset for $\cT$. Now, since $(S_E (\cF), \cT)$ is compact (Theorem \ref{thm:comp}), we obtain 
the whole proposition.
\end{proof}

If $\cF$ were a Banach semi-field, we could use Theorem \ref{thm:Tilby} and Urysohn to show 
that $(S_E (\cF), \cZ)$ is Hausdorff. But in general we do not know whether $(S_E (\cF), \cZ)$ is, or not, Hausdorff.

\s 
Next we prove the  analogue of the following well know fact for ideals $I_j$ in a ring $A$.
If $\cap_{j \in J} \, V(I_j)\,= \emptyset $ then there exists a finite sub family $F$ of $J$ 
such that $\sum_{i\in F} \, I_i \, = A$.

\begin{corollary} Let $(r_j)_{j\in J}$ be a family of  congruences of $\cF$. Assume that 
$\displaystyle \cap_{j \in J} \, V(r_j)\,= \emptyset $. There there exists a finite sub family $F$ of $J$ 
such that $\displaystyle \sum_{i \in F}\,  [0]_{r_i} = \cF$.

\end{corollary}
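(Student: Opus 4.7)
The plan is to combine Proposition \ref{prop:Vcap}.1 with the earlier observation that $V(r)$ is non-empty whenever $r$ is non-trivial, and then to exploit the absorbing role of $E$ provided by Assumption \ref{ass:1} in order to reduce an \emph{a priori} infinite sum to a finite one.

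First, Proposition \ref{prop:Vcap}.1 lets me rewrite the hypothesis as
\[
V\bigl(\vee_{j\in J} r_j\bigr) \,=\, \bigcap_{j \in J} V(r_j) \,=\, \emptyset.
\]
Since the Proposition stated just after Definition \ref{def:V} asserts that $V(r) \neq \emptyset$ for any non-trivial congruence $r$ on $\cF$ (this itself being a consequence of Corollary \ref{cor:ked}), the join $\vee_{j\in J} r_j$ must be the trivial congruence on $\cF$. In particular its class of $0$ is all of $\cF$, and Proposition \ref{prop:vee}.1 identifies this class with $\sum_{j\in J} [0]_{r_j}$. Thus $\sum_{j\in J} [0]_{r_j} = \cF$ as a set.

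Next, I apply this equality to the single element $E \in \cF$. By definition of the sum of a (possibly infinite) family of subsets, there is a finite index set $F \subset J$ and, for each $i \in F$, an element $H_i \in [0]_{r_i}$ with $E = \sum_{i \in F} H_i$. Set $\cH := \sum_{i \in F} [0]_{r_i}$, so that $E \in \cH$. Applying Proposition \ref{prop:vee}.1 now to the \emph{finite} family $(r_i)_{i\in F}$, I get that $\cH$ is the class of $0$ for the congruence $\vee_{i\in F} r_i$. Consequently, by Proposition \ref{prop:alg}.2, $\cH$ is a $\Q$-subvector space of $\cF$, so $tE \in \cH$ for every $t \in \Q$.

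It remains to show $\cH = \cF$, which is the only genuinely non-routine step: I need to lift the single membership $E \in \cH$ to $\cF \subset \cH$. For this, take any $X \in \cF$; by Assumption \ref{ass:1} there exists $t \in \Q^+$ with $-tE \leq X \leq tE$, and both $\pm tE$ belong to $\cH$. Lemma \ref{lem:cong}.3, applied to the class of $0$ of the congruence $\vee_{i\in F} r_i$, then forces $X \in \cH$. Hence $\cH = \cF$, i.e.\ $\sum_{i\in F}[0]_{r_i} = \cF$, which is what had to be proved. The conceptual content of the argument is that Assumption \ref{ass:1} together with the absorption Lemma \ref{lem:cong}.3 make the ``order-unit'' $E$ play the role of the unit $1$ in the classical Ring-theoretic analogue $\sum I_i = A$.
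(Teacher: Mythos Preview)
Your proof is correct, but it follows a different route from the paper's. The paper first invokes the quasi-compactness of $(S_E(\cF), \cZ)$ established in the preceding Proposition: since $\cap_{j\in J} V(r_j)=\emptyset$ and each $V(r_j)$ is closed for $\cZ$, quasi-compactness yields a finite $F\subset J$ with $\cap_{i\in F} V(r_i)=\emptyset$ directly. Only then does it set $\cH=\sum_{i\in F}[0]_{r_i}$ and appeal to Corollary \ref{cor:ked} (applied to $\cF$) to conclude $\cH=\cF$. You instead apply Corollary \ref{cor:ked} first, to the full family, obtaining $\sum_{j\in J}[0]_{r_j}=\cF$; then you extract finiteness purely algebraically from the fact that $E$ lies in a finite subsum, and you finish with the absorption Lemma \ref{lem:cong}.3 rather than a second appeal to character theory. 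Your argument is a touch more elementary in that it bypasses the quasi-compactness statement entirely and makes the role of the order-unit $E$ explicit; the paper's argument, on the other hand, highlights that the result is really a manifestation of quasi-compactness, which is the cleaner structural explanation and the closer analogue of the classical ring-theoretic proof via $\operatorname{Spec}A$.
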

\begin{proof} By the previous Proposition, $(S_E (\cF), \cZ)$ is quasi-compact, so there exists a finite sub family $F$ of $J$ such that 
$\cap_{i\in F} \, V(r_i)\,= \emptyset $. Set $\cH = \sum_{i\in F} \, [0]_{r_i}$. 
By parts 1] of Propositions \ref{prop:vee} and \ref{prop:Vcap}, 
 one defines 
a congruence $\sim$ on $\cF$ by saying that $X  \sim Y $ iff $X - Y \in \cH$,  and 
there is  no element $\phi \in S_E(\cF)$ which vanishes on $\cH$. Then, applying Corollary \ref{cor:ked} with 
$\cF$ instead of $\cR$, one deduces that $\cH= \cF$. 
The result is proved.
\end{proof}

\s
Now let us examine the action of a homomorphim of semi-fields  on these topologies.
\begin{proposition} \label{prop:zar} Let $\Phi: (\cF, \oplus, +) \rightarrow (\cF', \oplus, +)  $ be a homomorphism between two semi-fields satisfying Assumption \ref{ass:last}, such that $\Phi(E)=E$.
\item 1] The map $F: \phi \mapsto \phi \circ \Phi$ is continuous from $(S_E (\cF'), \cZ) $ to $(S_E (\cF) , \cZ)$. 

\item 2] The map $F$ is also  continuous from 
 $(S_E (\cF') , \cT) $ to   $(S_E (\cF) , \cT)$. 
\end{proposition}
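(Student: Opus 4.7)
The plan is to verify well-definedness of $F$ and then treat the two topologies separately. For well-definedness: if $\phi \in S_E(\cF')$, then $\phi \circ \Phi: \cF \to \R$ is a composition of two semi-field homomorphisms, hence a character of $\cF$, and $(\phi \circ \Phi)(E) = \phi(\Phi(E)) = \phi(E) = 1$, so $F(\phi) \in S_E(\cF)$.

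For Part 2], I would invoke the universal property of the initial topology. By Definition \ref{def:topo}, $\cT$ on $S_E(\cF)$ is the coarsest topology making every evaluation map $\phi \mapsto \phi(X)$, $X \in \cF$, continuous. So $F$ is $\cT$-continuous if and only if each composition $\phi \mapsto F(\phi)(X) = \phi(\Phi(X))$ is $\cT$-continuous on $S_E(\cF')$; but this map is precisely evaluation at $\Phi(X) \in \cF'$, which is continuous by the very definition of $\cT$ on $S_E(\cF')$. No further work is needed.

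For Part 1], I would show that the preimage under $F$ of each basic closed set $V(r) \subset S_E(\cF)$ is closed in $(S_E(\cF'), \cZ)$. Given a congruence $r$ on $\cF$, set $\Sigma = \{(\Phi(X), \Phi(Y)) : X, Y \in \cF,\, X\, r\, Y\} \subset \cF' \times \cF'$, and let $r'$ be the smallest congruence on $\cF'$ containing $\Sigma$; this makes sense because an arbitrary intersection of congruences on $\cF'$ is again a congruence, as is routine from Definition \ref{def:congF}. I would then prove $F^{-1}(V(r)) = V(r')$: on the one hand, $\phi \circ \Phi \in V(r)$ iff $\phi$ identifies every pair of $\Sigma$; on the other hand, for such a $\phi$ the relation $X' r_\phi Y' \Leftrightarrow \phi(X') = \phi(Y')$ is itself a congruence containing $\Sigma$, so it contains $r'$ by minimality, yielding $\phi \in V(r')$. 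The reverse inclusion $V(r') \subset F^{-1}(V(r))$ is immediate from $\Sigma \subset r'$. Thus $F^{-1}(V(r)) = V(r')$ is $\cZ$-closed. The only non-trivial ingredient is the existence of the generated congruence $r'$, enabled by stability of congruences under arbitrary intersection; no substantial obstacle arises.
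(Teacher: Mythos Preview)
Your proof is correct and follows essentially the same route as the paper: for Part 1] you define the congruence $r'$ generated by $\{(\Phi(X),\Phi(Y)): X\,r\,Y\}$ (the paper calls it $r'_\Phi$ and describes it as the intersection of all congruences respecting $\Phi$), and prove $F^{-1}(V(r))=V(r')$ via the auxiliary congruence $r_\phi$ exactly as the paper does. Your treatment of Part 2] via the universal property of the initial topology is precisely what the paper intends when it leaves this part to the reader.
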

\begin{proof} 1] Consider a congruence $r$ on $\cF$. Denote by $r'_\Phi$ the intersection of all the congruences $r'$ of 
$\cF'$ satisfying the condition:
$$
\forall X, Y \in \cF, \; X \,r\, Y \, \Rightarrow \, \Phi(X) \,r'\, \Phi (Y)\,.
$$ Let us show that $F^{-1} (V(r))= V(r'_\Phi)$. 

 Let $\phi \in S_E (\cF')$, denote by $r'_\phi$ the congruence on $\cF'$ defined  by: $A \, r'_\phi \, B$ if $\phi(A) = \phi(B)$.
 Then $\phi \in F^{-1} (V(r))$ or, equivalently
 $\phi  \circ \Phi \in V(r)$, if and only if:
 $$
 \forall X, Y \in \cF, \; X \,r\, Y \, \Rightarrow \, \Phi(X) \, r'_\phi \, \Phi(Y)\,.
 $$ But this means exactly that: 
 
 $$ \forall A, B \in \cF', \;  A\, r'_\Phi \, B \, \Rightarrow  A\, r'_\phi \, B \,.
 $$ In other words, this means that $\phi \in V(r'_\Phi)$. Therefore, $F^{-1} (V(r))= V(r'_\Phi)$, which proves the 
 continuity of $F$.
 2] is left to the reader.
\end{proof}

\s

\subsection{ Valuation and localization for semi-rings. } $\;$

\m 

We first motivate the next Definition by analyzing a simple example. Denote by $\cF_c([0,1])$ the set of  piecewise affine functions on $[0,1]$, this is
the semi-field 
of fractions of the semi-ring $\cR_c([0,1])$ introduced at the beginning of this Section. 
Consider $x \in ]0,1[$ and  $X \in \cF_c([0,1])$ such that $X(x)=0$. 
Then the following number 
$$ 
    \frac{X( x-h) + X(x+ h)}{h}  \,,
$$ does not depend on $h>0$ for $h$ small enough: it is equal to $X'_d(x) - X'_g(x)$. We denote it $V_x( X)$. 
Observe that for any $X, Y \in \cF_c([0,1])$ with $X(x)=Y(x)=0$, one has:
$$
V_x( X+Y)= V_x(X)+V_x(Y),\; \max( V_x(X) , V_x(Y) ) \leq  V_x ( \max (X, Y) )\,.
$$
Moreover, for any $A \in \cF_c([0,1])$, $A$  belongs to $\cR_c([0,1])$ if and only if

\begin{equation} \label{eq:convex} \forall x \in ]0,1[, \, 0\leq V_x ( A - A(x) E)\,,
\end{equation}

where $E$ is the constant function {\bf 1}. As for the extreme points of $[0,1],$ we set $V_0=V_1=0$.


\s 

{\it Recall that in this Section, $\cR$ denotes a semi-ring of characteristic $1$ satisfying Assumption \ref{ass:last}.}
In order to be able to generalize properly  these observations 
to $\cR$, we first define the notion of valuation 
in our context.

\begin{definition} \label{def:val} 
A valuation on $\cR$ is given by a pair
$(\phi , V_\phi)$ where $\phi \in S_E (\cF)$ and 

\noindent $V_\phi: \phi^{-1}\{0\} \cap \cR \rightarrow  [0, + \infty[$ 
is a map satisfying the following two conditions.

\item 1]  For all $X,Y  \in \phi^{-1}\{0\} \cap \cR$,  $V_{\phi} (X +Y)= V_{\phi} (X) + V_{\phi}(Y) \,.$

\item 2]  For all $X,Y  \in \phi^{-1}\{0\} \cap \cR$, $ \max ( V_{\phi} (X) , V_{\phi} (Y) )\, \leq \, V_{\phi} (X \oplus Y) \,.$

One defines similarly a valuation on the semi-field of fractions $\cF$ of $\cR$. It is  a pair $(\phi , V_\phi)$ where 
$\phi \in S_E(\cF)$ and $V_\phi: \phi^{-1}\{0\} \rightarrow \R$ is a map 
satisfying the same two previous conditions but with $X,Y \in \phi^{-1}\{0\}$.

\end{definition}

We insist on the fact that by Definition, {\bf a valuation on a semi-ring $\cR$ is defined only on $\phi^{-1}\{0\} \cap \cR$ and  takes positive values or zero.}
This said,  a valuation $V_\phi$ on $\cF$ takes values in $\R$ and, morally 
 $V_{\phi} (X- \phi (X) \, E)$ defines an order of vanishing. 

\s
Let us give several concrete examples of this concept of valuation.

\begin{example} \item 1] Consider $\cR_c([0,1])$, the semi-ring of all the piecewise affine convex functions on $[0,1]$.
Define a character $\phi$ by  $\phi(X)= X(1/2)$ for $ X \in \cR_c([0,1])$ and, if  $X(1/2)=0$ set:
$$
V_\phi (X) = X_d'(1/2)- X_g'(1/2)\,.
$$
Another valuation $V^1_\phi$ is defined by $V^1_\phi (X) = \displaystyle \frac{ X(2/5) + X(3/5)}{2}$.
\item 2] Consider the semi-ring $\cR$ of all the functions $X(x,y)= \max_{1\leq j \leq n} ( a_j x + b_j y + c_j) $ 
on the square $[-1, 1]^2$. Define  a character $\phi$ by $\phi(X)= X(0,0)$. If $X(0,0)=0$, then the integral on 
the disc $D(0,r)$:
$$
\frac{1}{r^3} \int_{D(0,r)} X(x,y) dx\, d y
$$ does not depend on $r>0$ for $r$ small enough. Call it $V_\phi(X)$. This defines a valuation on $\cR$.
\end{example}

\s
The next Lemma explains how a  valuation on $\cR$ can be uniquely extended to a  
valuation on its semi-field of fractions $\cF$. 
\begin{lemma} \label{lem:vallin} Let $(\phi , V_\phi)$ be a valuation on $\cR$. 
\item 1] One defines a valuation 
$( \phi , V'_{\phi})$ on $\cF$ in the following way. For any $A, B \in \cR$ such that $\phi(A-B)=0$, set:
$$
V'_{\phi} (A-B)= V_\phi (A- \phi(A) E) - V_\phi (B- \phi(B) E)\,.
$$ Moreover, $( \phi , V'_{\phi})$ is the unique valuation of $\cF$ whose restriction
to $\cR$ gives $(\phi , V_\phi)$. Therefore, we shall frequently write $(\phi , V_\phi)$ instead of $( \phi , V'_{\phi})$.

\item 2] Let  $X \in \cF$ be  such that $\phi(X)=0$. Then for any $t\in \R$,  one has $V'_\phi ( t X) = t V'_\phi ( X)$.
\end{lemma}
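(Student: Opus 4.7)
For Part 1, I would first verify well-definedness of $V'_\phi$. If $A - B = A' - B'$ in $\cF$ with $\phi(A-B) = 0$, then $A + B' = A' + B$ in $\cR$ and applying $\phi$ forces $\phi(A) + \phi(B') = \phi(A') + \phi(B)$. Writing $\tilde A := A - \phi(A)E$ and similarly for the other three representatives (each lying in $\phi^{-1}\{0\} \cap \cR$ by Assumption \ref{ass:last}.3]), the identity $\tilde A + \tilde B' = \tilde A' + \tilde B$ in $\phi^{-1}\{0\} \cap \cR$ combined with additivity of $V_\phi$ yields $V_\phi(\tilde A) - V_\phi(\tilde B) = V_\phi(\tilde A') - V_\phi(\tilde B')$. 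Additivity of $V'_\phi$ on $\phi^{-1}\{0\}$ then follows by a direct computation from additivity of $V_\phi$ and of $\phi$.

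For the max inequality, I would use the algebraic identity
$$(A - B) \oplus (A' - B') = \bigl[(A + B') \oplus (A' + B)\bigr] - (B + B'),$$
which follows from distributivity of $+$ over $\oplus$. Applying the definition of $V'_\phi$ to this expression, and using distributivity once more to commute the translation by $-cE$ (with $c = \phi(A) + \phi(A')$, forced by $\phi(X) = \phi(Y) = 0$) inside the $\oplus$, the computation reduces to the max inequality for $V_\phi$ on $\phi^{-1}\{0\} \cap \cR$ applied to $(\tilde A + \tilde B') \oplus (\tilde A' + \tilde B)$, followed by additivity of $V_\phi$. For uniqueness: for any $X \in \phi^{-1}\{0\}$ written as $X = A - B$ with necessarily $\phi(A) = \phi(B)$, one has $X = \tilde A - \tilde B$ with both terms in $\phi^{-1}\{0\} \cap \cR$; additivity of any valuation extension $V''_\phi$ applied to $\tilde A = X + \tilde B$ forces $V''_\phi(X) = V_\phi(\tilde A) - V_\phi(\tilde B) = V'_\phi(X)$.

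For Part 2, additivity applied to $0 + 0 = 0$ gives $V'_\phi(0) = 0$, hence $V'_\phi(-X) = -V'_\phi(X)$ for any $X \in \phi^{-1}\{0\}$. The canonical decomposition $X = X_+ - X_-$ of Lemma \ref{lem:oplus}.3], with $\phi(X_\pm) = 0$ whenever $\phi(X) = 0$, reduces the claim to the case $X \geq 0$ in $\phi^{-1}\{0\}$ and $t \geq 0$. The function $f(t) = V'_\phi(tX)$ is then additive on $\R^+$ by the $\R^{+*}$-action of Assumption \ref{ass:last}.2] and the identity $(s+t)X = sX + tX$. Monotonicity follows from the observation that $V'_\phi(Y) \geq 0$ for any $Y \geq 0$ lying in $\phi^{-1}\{0\}$: writing $Y = \tilde A - \tilde B$ with $\tilde B \leq \tilde A$ in $\phi^{-1}\{0\} \cap \cR$, the max condition on $V_\phi$ gives $V_\phi(\tilde A) = V_\phi(\tilde A \oplus \tilde B) \geq V_\phi(\tilde B)$. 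A monotonic additive function on $\R^+$ is linear, so $f(t) = t V'_\phi(X)$; combining with the sign decomposition and $V'_\phi(-Y) = -V'_\phi(Y)$ extends this to all $t \in \R$.

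The main obstacle is the max inequality in Part 1: there is no built-in compatibility between $\oplus$ and differences in $\cF$, and the algebraic identity above, together with the reduction to $\phi$-null representatives via the translations $\tilde A$, is what unlocks the argument.
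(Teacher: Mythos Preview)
Your proposal is correct and follows essentially the same route as the paper. For Part~1 you use exactly the same well-definedness check, the same algebraic identity $(A-B)\oplus(A'-B') = \bigl[(A+B')\oplus(A'+B)\bigr] - (B+B')$, and the same translation by $-cE$ (your $c=\phi(A)+\phi(A')$ equals the paper's $\phi(B+B')$ since $\phi(A)=\phi(B)$, $\phi(A')=\phi(B')$). For Part~2 both arguments decompose $X=X_+-X_-$ and show that $t\mapsto V'_\phi(tX_+)$ is additive and monotone, hence linear; the only cosmetic difference is that you deduce monotonicity from the observation $V'_\phi(Y)\geq 0$ for $Y\geq 0$ (using the max condition for $V_\phi$ on $\cR$ directly), whereas the paper applies the just-established max inequality for $V'_\phi$ to $tX_+\oplus(t+h)X_+$.
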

 
 \begin{proof}
 1]   Notice that by Assumption \ref{ass:last}, $A- \phi(A) E \in \cR$ for all $A \in \cR$.
 Consider then $A,B,A',B' \in \cR$ such  that  $A-B=A'-B'$ and $\phi(A-B)=0$.
 Using the additivity of $V_\phi$, one checks easily that:
 $$
 V_\phi (A- \phi(A) E) - V_\phi (B- \phi(B) E) \, = \, V_\phi (A'- \phi(A') E) - V_\phi (B'- \phi(B') E) \,.
 $$ Therefore, $V'_{\phi}$ is well defined. 
 
 It is easily checked that $V'_{\phi}$ satisfies, on $\phi^{-1}\{0\}$, 
 the additivity condition 1] of Definition \ref{def:val}. Let us prove that it satisfies also the condition 2].
 So, consider $A,B,A_1,B_1 \in \cR$ such that $\phi(A-B)=0 =\phi(A_1-B_1)$. We can write:
 $$
 V'_\phi ( (A-B) \oplus (A_1 - B_1) )= M\,-\, 
 V_\phi \bigl(B+ B_1 - \phi(B+B_1) E\bigr) \,, 
 $$ where we have set:
 $$
 M= V_\phi \bigl( \,(A+B_1) \oplus (A_1 + B) - \phi(B+B_1) E  \, \bigr)\,.
 $$ 
 Next, using  Definition \ref{def:val}.2] for $V_\phi$, we obtain:
 $$
 M= V_\phi \bigl( \,(A+B_1 - \phi(B+B_1) E ) \oplus (A_1 + B - \phi(B+B_1) E )  \, \bigr)\geq 
 $$
 $$\max \bigl( \, V_\phi (A+B_1 - \phi(B+B_1) E ) ,   
 V_\phi (A_1 + B - \phi(B+B_1) E) \, \bigr)\,.$$
 Now, by subtracting $V_\phi \bigl(B+ B_1 - \phi(B+B_1) E\bigr) $ to 
 both sides of this inequality, we obtain:
 $$
 V'_\phi ( (A-B) \oplus (A_1 - B_1) ) \geq \max \bigl( \, V'_\phi  (A-B) , V'_\phi  (A_1-B_1) \, \bigr)\,.
 $$ The result is proved.
 
 2]  Write $X = X_+ - X_-$ where $X_+ = 0 \oplus X$, $X_- = 0 \oplus (-X)$. Observe that $\phi ( X_\pm)=0$.
Consider  the  map $t \mapsto \psi (t)= V'_\phi( t X_+)$ defined from $\R$ to $\R$.
By the definition of $V'_\phi$, for any reals $t,t'$ one has: $\psi( t+ t')= \psi(t) + \psi(t')$. 
Consider now $(t,h) \in \R \times \R^+$. Since $0 \leq X_+$, Assumption \ref{ass:last}.2] allows to check that $t X_+ \oplus (t+h) X_+ = (t+h) X_+$. 
By the definition of $V'_\phi$, one then has:
$$
\max( V'_\phi( t X_+) , V'_\phi( (t+h) X_+) ) \, \leq \, V'_\phi( t X_+ \oplus (t+h) X_+)  = V'_\phi((t+h) X_+)\,.
$$ In other words, $\psi$ is nondecreasing on $\R$. It is then standard that for all $t \in \R$,
$\psi(t)=t \psi(1)$, so that $V'_\phi( t X_+)= t V'_\phi(  X_+)$. Now, since $V'_\phi(-Z)=- V'_\phi(Z)$
for any $Z \in \phi^{-1}\{0\}$, one obtains easily the result.

 \end{proof}

Notice the following subtle point in the previous Lemma. Even if  $V'_{\phi} (X - \phi(X) E) =0$ for an element $X $ of $\cF$, then it is not true in general that
$X$ belongs to $\cR$.

\s
Now we come to the concept of $v-$local semi-ring.

\begin{definition} \item 1] A  semi-ring $\cR$ is called  v-local if it is endowed with a valuation $(\phi , V_\phi)$
such that any $X \in \cR$ satisfying $V_\phi ( X - \phi (X) E) = 0$ is invertible in $(\cR, +)$.

\item 2] A  semi-ring $\cR$ is said to be of local valuation if $\cF$ is endowed with a valuation $(\phi , V_\phi)$
such that $\cR = \{ X \in \cF/\; 0 \leq  V_\phi ( X - \phi (X) E) \}$.
\end{definition}

A local valuation semi-ring is clearly $v-$local, but the converse is false.

\s
Next we define the localized of a semi-ring $\cR$ along $(\phi, V_\phi)$ by analogy with the process of localization 
in classical commutative algebra.

\begin{definition} \label{def:loc} Let $\cR$ be a semi-ring (as in Assumption \ref{ass:last}) endowed with a valuation 
$(\phi, V_\phi)$.  We define the localization $\cR_{[\phi]}$ 
of $\cR$ at $\phi \in S_E (\cF)$  to be the following sub semi-ring of $\cF$:
\begin{equation} \label{eq:loca}
 \cR_{[\phi]}= \{ A-B/ \, A, B \in \cR ,\,    \; V_{\phi} (B - \phi(B) E)=0 \}\,.
\end{equation}
The valuation $(\phi, V_\phi)$ extends naturally to a valuation (denoted by  the same name) of 
$\cR_{[\phi]}$ and makes it a $v-$local semi-ring.
\end{definition} 

That $\cR_{[\phi]}$ is indeed a $v-$local  semi-ring  is an easy exercise. 

 One motivates the previous definition
as follows. In standard algebraic geometry, the idea of localization is to make invertible the regular functions 
which do not have a zero at $\phi$. For the tropical  semi-field $\cF_c([0,1])$ above, 
a zero (resp. pole)  of the piecewise affine function $B$ is a point $x=\phi $ such that $0 < B'_d(x) -B'_g(x)$
(resp. $B'_d(x) -B'_g(x) <0$). 
With the notations of \eqref{eq:convex},  the condition $V_{\phi} (B- \phi(B)E) =0$  means precisely that $x$ is neither a zero nor a pole in the tropical sense.

\s

\s
Now we come to the analogue of the concept of morphisms between local rings in algebraic geometry.
\begin{definition} \label{def:ml} A local morphism  between two $v-$local semi-rings $(\cR', \phi', V_{\phi'})$ and 
$(\cR, \phi, V_{\phi})$ is given by a morphism of semi-rings $\Phi : \cR' \rightarrow \cR$ such that:

\item 1] $\Phi(E)= E$ and $\phi\circ \Phi = \phi'$.

\item 2] For any $ X \in \cR'$,
$ V_{\phi'}( X - \phi'(X) E) > 0 \, \Leftrightarrow \,  V_{\phi} ( \Phi(X) - \phi \circ \Phi(X)\, E \,) >0$

\end{definition}

\s
%

%
%

\m

\subsection{Locally semi-ringed spaces. Schemes.} $\;$

\s
Next, we define the analogue, in characteristic $1$, of the notion of locally ringed space 
in Algebraic Geometry (\cite{Ha}). 
\begin{definition} A $v-$locally semi-ringed space $(S, \cO)$  is given by a topological space $S$ 
endowed with a sheaf of semi-rings $\cO$ satisfying the following conditions.

\item 0] For any inclusion $U \subset W$ of open subsets of $S$, $\cO( W)$ satisfies Assumption \ref{ass:last} and
the restriction map $\cO( W) \rightarrow \cO(U)$ sends $E$ to $E$.

\item 1] For each $s \in S$, the stalk $\cO_s$ of $\cO$ is endowed 
with a valuation $(\phi_s, V_s)$ making it a $v-$local semi-ring.

\item 2] Let $U$ be an open subset of $S$ and  $s_0 \in U$.  Consider $X, Y \in \cO(U)$ 
 such that their germs $X_{s_0}, Y_{s_0} \in \cO_{s_0}$ satisfy $V_{s_0} ( X_{s_0} - \phi_{s_0} (X_{s_0}) E) = V_{s_0} ( Y_{s_0} - \phi_{s_0} (Y_{s_0}) E)$.
Then, there exists a neighborhood $\Omega$ of $s_0$ in $U$ 
such that
$$
\forall s \in \Omega , \; V_{s} ( X_{s} - \phi_s (X_{s}) E) = V_{s} ( Y_{s} - \phi_s (Y_{s}) E)\,.
$$
\end{definition}

\s

The  definition of the  concept of morphism between two $v-$locally semi-ringed spaces $(S, \cO)$ and $ (S', \cO')$   needs 
some preparation. Consider a pair $(F, F^\#)$ where $F: S \rightarrow S'$ is a continuous map and 
$F^\#: \cO' \rightarrow F_* \cO$ is a morphism of sheaves of semi-rings sending $E$ to $E$.

Consider $s \in S$ and an open neighborhood $U$
 of $F(s)$.
 By definition we have a morphism of semi-rings: 
$F^\#(U): \cO'( U) \rightarrow \cO( F^{-1}(U) $. By taking the inductive limit when $U$ runs over the open neighborhoods of $F(s)$,
 we obtain, by the definition of a morphism of sheaves, a homomorphism of semi-rings $F^\#_{s} : \cO'_{F(s)} \rightarrow \cO_s$.

\begin{definition} A morphism between two $v-$locally semi-ringed spaces $(S, \cO)$ and $ (S', \cO')$ is given by a pair 
$(F, F^\#)$ as above satisfying the following condition. For any $s \in S$, the map $F^\#_{s}$ defines a local morphism, in the sense of 
Definition \ref{def:ml},  from 
$( \cO'_{F(s)}, \phi'_{F(s)}, V'_{F(s}) $ to $(\cO_s, \phi_s, V_s)$, where $\phi'_{F(s)} = \phi_s \circ F^\#_{s}$.

\end{definition}

\s Now, in order to be able to associate a $v-$locally semi-ringed space to a semi-ring $\cR$, we need to assume that it has some extra structure.
The set $S_E (\cF)$  is endowed either with the topology $\cT$ or the 
Zariski one (see end of Section 7.1). 

\begin{definition} \label{def:loca} Let $\cR$ be a semi-ring (as in Assumption \ref{ass:last}). We shall call localization data on $\cR$, the data for each $\phi \in S_E (\cF)$,
 of   a  valuation $(\phi,V_{\phi})$ on $\cR$   
 such that  the following condition is satisfied.
 Let $(X , \phi_0) \in \cF \times S_E (\cF)$ be such that $V_{\phi_0 } (X- \phi_0 (X) \, E)=0$.
Then there exists a neighborhood $\Omega$  of $\phi_0$  in $ S_E (\cF)$ such that
$$
\forall \phi \in \Omega, \; V_{\phi} (X- \phi (X) \, E)=0\,,
$$ where we have still denoted $V_\phi$ the canonical extension of $V_\phi$ to $\cF$.
We shall denote such localization data by $(S_E (\cF), (V_\phi) )$.
\end{definition} 








%

The next proposition shows that
these   localization data allow to define a natural semi-ring of $\cF$ in the same way that the data \eqref{eq:convex}  
 characterize the convex functions in $\cF_c([0,1])$. 
\begin{proposition} We keep the notations of the previous Definition. One obtains  a sub  semi-ring of $\cF$ stable by the action of $\R^+$  by setting:
$$
\widehat{\cR}\,=\,\{ X \in \cF / \;  \forall \phi \in S_E (\cF) , \; 0 \leq  V_{\phi} (X- \phi (X) \, E) \}\,.
$$ 
\end{proposition}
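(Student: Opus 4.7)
The plan is to check separately that $\widehat{\cR}$ is stable under $+$, under $\oplus$, and under the action of $\R^+$; the non-trivial step is closure under $\oplus$. Throughout I would work with the extended valuation $V'_\phi$ on $\phi^{-1}\{0\} \cap \cF$ supplied by Lemma \ref{lem:vallin}. Since that Lemma asserts that $(\phi, V'_\phi)$ is itself a valuation on $\cF$ in the sense of Definition \ref{def:val}, both the additivity and the super-tropical inequality of Definition \ref{def:val} hold for $V'_\phi$ on $\phi^{-1}\{0\} \cap \cF$. Consequently one obtains the following monotonicity property, which I would record once for all: if $U \leq V$ with $U, V \in \phi^{-1}\{0\} \cap \cF$, then $V = U \oplus V$, and the super-tropical axiom gives $V'_\phi(V) = V'_\phi(U \oplus V) \geq \max(V'_\phi(U), V'_\phi(V)) \geq V'_\phi(U)$.

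Closure under $+$ is then immediate: if $X, Y \in \widehat{\cR}$ and $\phi \in S_E(\cF)$, then $(X+Y) - \phi(X+Y)E = (X - \phi(X)E) + (Y - \phi(Y)E)$ lies in $\phi^{-1}\{0\} \cap \cF$, and additivity of $V'_\phi$ yields $V'_\phi((X+Y) - \phi(X+Y)E) = V'_\phi(X - \phi(X)E) + V'_\phi(Y - \phi(Y)E) \geq 0$. Stability under the action of $\R^+$ follows from Lemma \ref{lem:vallin}.2]: for $t \geq 0$, one has $V'_\phi(tX - \phi(tX)E) = V'_\phi(t(X - \phi(X)E)) = t\,V'_\phi(X - \phi(X)E) \geq 0$. (The fact that $0 \in \widehat{\cR}$ is trivial.)

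For closure under $\oplus$, let $X, Y \in \widehat{\cR}$ and $\phi \in S_E(\cF)$. Set $a = \phi(X)$, $b = \phi(Y)$, and assume WLOG that $a \geq b$, so that $\phi(X \oplus Y) = a$. By distributivity of $+$ over $\oplus$ in $\cF$ we can write $(X \oplus Y) - aE = (X - aE) \oplus (Y - aE)$. The element $X - aE$ has $\phi$-value $0$, and the $\phi$-value of $(X - aE) \oplus (Y - aE)$ is $\max(0, b-a) = 0$, so both elements belong to $\phi^{-1}\{0\} \cap \cF$. Since $X - aE \leq (X - aE) \oplus (Y - aE)$, the monotonicity of $V'_\phi$ recorded above gives $V'_\phi((X \oplus Y) - \phi(X \oplus Y) E) \geq V'_\phi(X - \phi(X) E) \geq 0$, and therefore $X \oplus Y \in \widehat{\cR}$.

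The only real subtlety is the case $\phi(X) > \phi(Y)$ in the $\oplus$-closure step: here the super-tropical axiom does not apply to the pair $(X - aE, Y - aE)$ directly, because $Y - aE$ has strictly negative $\phi$-value and so does not lie in $\phi^{-1}\{0\}$. The monotonicity argument bypasses this by comparing $(X - aE) \oplus (Y - aE)$ with the single summand $X - aE$, which does lie in $\phi^{-1}\{0\} \cap \cF$; the WLOG assumption $a \geq b$ resolves the asymmetry this introduces. Note that the localization data condition of Definition \ref{def:loca} is not needed for this proposition --- it enters only when $\widehat{\cR}$ is used to assemble the structural sheaf discussed immediately afterwards.
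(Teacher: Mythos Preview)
Your proof is correct and follows essentially the same route as the paper. In particular, for closure under $\oplus$ the paper also reduces (after assuming $\phi(X') \leq \phi(X)$) to the identity $Z = (X-\phi(X)E)\oplus Z$ and applies the super-tropical axiom to the pair $(X-\phi(X)E,\, Z)$; your ``monotonicity of $V'_\phi$'' lemma is precisely this step abstracted out, and your remark that the localization-data hypothesis is not used here is accurate.
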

\begin{proof}  Definition \ref{def:val}.1] implies that $V_\phi(0)= 0$, it is then clear that 
$\R E \subset \widehat{\cR}$. 

Consider now $X, X' \in \widehat{\cR}$ and let us check that 
$X \oplus X' \in \widehat{\cR}$. Let $\phi \in S_E(\cF)$, assume for instance that $\phi(X') \leq \phi(X)$ so that 
$\phi (X \oplus X')= \phi(X)$ by the properties of a character. Set $Z= X \oplus X' - \phi(X) E$,
then one has:
$$
Z = (X - \phi(X) E) \oplus (X' - \phi(X) E) = (X - \phi(X) E) \oplus Z\,.
$$ Applying then Definition \ref{def:val}.2], we deduce:
$$
V_\phi(Z)=V_\phi \bigl(\, (X - \phi(X) E) \oplus Z \, \bigr) \geq \max \bigl(  V_\phi (X - \phi(X) E) , V_\phi(Z) \bigr) \,.
$$ Since $V_\phi (X - \phi(X) E) \geq 0$, 
 we obtain $V_\phi(Z) \geq 0$, which proves that $X \oplus X' \in \widehat{\cR}$. Moreover, it is clear that 
$X+ X' \in \widehat{\cR}$. Lastly, consider $t\in \R^+$ and $X \in \widehat{\cR}$, then by  Lemma \ref{lem:vallin}.2]  it is clear that $t X \in \widehat{\cR}$.
The Proposition is proved.
\end{proof} 
\s 
Now we define the structural sheaf $\cO_\cR$ on $S_E (\cF)$.   

\begin{definition} \label{def:O} Assume that $\cR$ is endowed with localization data $( S_E (\cF), (V_\phi) )$.
Let $U$ be an open subset of $S_E (\cF)$. An element of $F \in \cO_\cR(U)$ is a function on $U$ which to each  $\phi \in U$ associates 
an element $F(\phi) \in \cR_{[\phi]}$ subject to the following condition. Each point $\phi_0$ of  $ U$ 
admits an open neighborhood $\Omega \subset U$ such that there exists $A,B \in \cR$:
$$
\forall \phi \in \Omega,\; F(\phi) = A-B ,\; {\rm and} \;V_\phi(B - \phi (B) E) = 0\, .
$$  The usual restriction between open subsets endows $\cO_\cR$ with the structure of sheaf of semi-rings. The stalk of $\cO_\cR$ at each point $\phi$ is 
$\cR_{[\phi]}$ (see Definition \ref{def:loc}).
\end{definition}
It is worthwhile to recall that by definition of $(\cR, V_\phi)$, $V_\phi(A - \phi (A) E) \geq 0$.
 
 \s

By combining the Definitions \ref{def:loca} and \ref{def:O} we obtain immediately the following Lemma 
which fixes the concept of affine scheme in our context.

\begin{lemma}  Assume that $\cR$ is endowed with localization data $( S_E (\cF), (V_\phi) )$. 

\noindent Then 
$( S_E (\cF), \cO_\cR)$ is a $v-$locally semi-ringed space of a particular type, which we call affine.
\end{lemma}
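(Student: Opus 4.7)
The plan is to verify the three defining axioms of a $v$-locally semi-ringed space for $(S_E(\cF), \cO_\cR)$. Axiom (1), that each stalk is a $v$-local semi-ring, is essentially tautological from the construction; axiom (0), that each $\cO_\cR(U)$ satisfies Assumption \ref{ass:last}, is inherited pointwise from the semi-field $\cF$; and axiom (2), the semicontinuity of valuations on germs, is the real content and reduces to the defining property of the localization data.

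Axiom (1) is immediate from Definition \ref{def:O}: the stalk $(\cO_\cR)_\phi$ is by construction equal to the localization $\cR_{[\phi]}$, which Definition \ref{def:loc} equips with the valuation $(\phi, V_\phi)$ making it $v$-local. For axiom (0), I would check that $\cO_\cR(U)$ is a semi-ring with the two pointwise laws inherited from $\cF$, that the constant section $\phi \mapsto tE$ belongs to $\cO_\cR(U)$ for every real $t$ (since $V_\phi(tE - \phi(tE)E) = V_\phi(0) = 0$ identically), so in particular $E \in \cO_\cR(U)$ and is preserved by restrictions, and that Assumption \ref{ass:last} transfers from $\cR$ to $\cO_\cR(U)$ via the local representation $F = A-B$ of sections together with the $\R$-vector space structure of $\cF$ provided by Assumption \ref{ass:last}.2.

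The main step is axiom (2). Given $X, Y \in \cO_\cR(U)$ and $\phi_0 \in U$ with $V_{\phi_0}(X - \phi_0(X)E) = V_{\phi_0}(Y - \phi_0(Y)E)$, I would first shrink $U$ to a neighborhood $\Omega_0$ of $\phi_0$ on which Definition \ref{def:O} provides fixed $A_X, B_X, A_Y, B_Y \in \cR$ with $X = A_X - B_X$, $Y = A_Y - B_Y$ (pointwise) and $V_\phi(B_X - \phi(B_X)E) = V_\phi(B_Y - \phi(B_Y)E) = 0$ for every $\phi \in \Omega_0$. The additivity of $V_\phi$ on $\phi^{-1}\{0\}$ (Lemma \ref{lem:vallin}) applied to the decomposition
\[X - \phi(X)E = (A_X - \phi(A_X)E) - (B_X - \phi(B_X)E)\]
then yields $V_\phi(X - \phi(X)E) = V_\phi(A_X - \phi(A_X)E)$ on $\Omega_0$, and similarly for $Y$. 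Setting $W = (A_X + B_Y) - (B_X + A_Y) \in \cF$, a \emph{fixed} element of the semi-field, the same additivity gives
\[V_\phi(W - \phi(W)E) = V_\phi(A_X - \phi(A_X)E) - V_\phi(A_Y - \phi(A_Y)E) \qquad (\phi \in \Omega_0).\]
Hence the hypothesis at $\phi_0$ is equivalent to $V_{\phi_0}(W - \phi_0(W)E) = 0$; applying the localization data condition of Definition \ref{def:loca} to $W \in \cF$ produces a neighborhood $\Omega \subset \Omega_0$ of $\phi_0$ on which this vanishing persists, and undoing the chain of additivity identities delivers axiom (2) on $\Omega$.

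The principal obstacle is precisely this last step: one must convert a germ-theoretic comparison of two \emph{sections} $X$ and $Y$ into a statement about a single \emph{fixed} element of $\cF$ to which the localization data axiom actually applies. The trick is to restrict to a common neighborhood where both $X$ and $Y$ are realised as concrete differences $A-B \in \cF$, then form the auxiliary element $W \in \cF$ whose vanishing of $V_{\phi}(W - \phi(W)E)$ encodes the equality in question. Everything else in the verification is bookkeeping with the additivity of $V_\phi$ and the stalkwise identification $(\cO_\cR)_\phi = \cR_{[\phi]}$.
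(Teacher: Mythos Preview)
Your proof is correct and supplies the details that the paper omits entirely: the paper states this lemma as an immediate consequence of combining Definitions~\ref{def:loca} and~\ref{def:O} and gives no argument. Your verification of the three axioms is the natural one, and in particular your reduction of axiom~(2) to the localization-data condition by forming the auxiliary element $W = (A_X + B_Y) - (B_X + A_Y) \in \cF$ is exactly the manoeuvre needed to pass from a germwise statement about sections to a statement about a fixed element of $\cF$.
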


\begin{definition} \label{def:scheme} A $v-$locally semi-ringed space is called a scheme if locally it is isomorphic 
to an open subset of an affine scheme.
\end{definition}

\s
Let us give an example of a scheme. 
\begin{example} \label{ex:scheme}
Endow $\R / \Z$ with the usual topology  and with the sheaf $\cO$ of functions $X$ which near each point $s_0$ of $ \R / \Z$ are of the 
following form.  For $s \leq s_0, \, X(s)= a s + b$;  for $s \geq s_0,\; X(s)= a' s + b'$ where:
$$
a,b, a', b' \in \R,\; a s_0 + b = a' s_0 + b',\; 0\leq -a + a' \, = V_{s_0} (X - X(s_0) {\bf 1})\,.
$$ Then $(\R / \Z, \cO)$ is a scheme in the sense of the previous definition.
\end{example}

\s
Now we introduce a concept allowing to detect a structure having some arithmetic flavor.
\begin{definition} Let $(S, \cO)$ be a scheme and $\K$ a sub field of $\R$. Consider a sub sheaf of semi-rings $\widehat{\cO}$ of $\cO$ where, for $U$ open in $S$,  the $\widehat{\cO}(U)$ are not assumed to be preserved by the action of  $\R^{+*}$.
One says that 
 $\widehat{\cO}$  is defined over $\K$ if it satisfies the following conditions.

\item 1] Let $s \in S$ be such that $\phi_s ( \widehat{\cO}_s)$ is not included in $\K$. Then for any 
$X \in \widehat{\cO}_s$, 

\noindent $V_{\phi_s} ( X - \phi_s (X) E) = 0$.

\item 2] If $\phi_s ( \widehat{\cO}_s) \subset \K$ then $\K E \subset \widehat{\cO}_s $, and for any 
$X \in \widehat{\cO}_s$, $V_{\phi_s} ( X - \phi_s (X) E) \in \K \cap [0, + \infty[$.

\item 3] The subset $\{s \in S /\; \phi_s ( \widehat{\cO}_s) \subset \K \}$ is dense in $S$.
\end{definition}

Consider the Example \ref{ex:scheme}. Then the functions $X$ with $a,b,a',b' \in \K$ define a sub-sheaf  $\widehat{\cO}$ of 
$\cO$ which  is defined over $\K$. Indeed, if $s_0$ does not belong to $\K$ then the equality $a s_0 + b = a' s_0 + b'$ 
implies $a=a'$ so that $V_{s_0} (X - X(s_0) {\bf 1})=0$.


\s

\m
We have developed some foundations for  a new scheme theory in characteristic $1$. We shall try to explore further this issue in a future paper.

	\bibliographystyle{amsalpha}

\begin{thebibliography}{ACDE}
\bibitem[Ak-Ba-Ga] {Ak} M. Akian, R. Bapat and S. Gaubert: {\it Non-archimedean valuations of eigenvalues of matrix polynomials.} Preprint  ArXiv:1601.00438.

\s
\bibitem[Be] {Be} V. Berkovich: {\it Spectral Theory and Analytic Geometry over Non-Archimedean Fields.}
 Surveys and Monographs 33, Amer. Math. Soc., Providence, 1990.
 
 \s
\bibitem[Br-It-Mi-Sh] {Br} E. Brugall\'e, I. Itenberg, G. Mikhalkin and K. Shaw: {\it Brief introduction to Tropical Geometry.}
 Proceedings of the Gökova Geometry-Topology Conference 2014,  Gökova Geometry/Topology Conference (GGT), Gökova, 2015, pages 1-75.
 
  \s
\bibitem[Bu-Ca-Mu] {Mu} M. Businache, L. Cabrer and D. Mundici: {\it Confluence and combinatorics in finitely generated unital lattice-ordered abelian groups.} Forum Math. 24 (2012), pages 253-271.


\s

\bibitem[Co-C0] {CC1} A. Connes and  C. Consani: {\it  Characteristic one, entropy and the absolute point.}  
 Proceedings of the JAMI Conference 2009.
 
\bibitem[Co-C1]{CC2}   A. Connes and C. Consani : { \it Geometry of the Arithmetic Site.}   Adv. Math. 291 (2016), pages 274-329.  
   
   \s
   \bibitem[Co-C2]{CC3}   A. Connes and C. Consani : { \it Geometry of the Scaling Site.} Preprint ArXiv:1603.03191
   
   \s
   \bibitem[Co-C3]{CC4}   A. Connes and C. Consani : { \it The universal thickening of the field of real numbers.} 
   Advances in the theory of numbers,  Fields Inst. Commun., 77, Fields Inst. Res. Math. Sci., Toronto, ON, 2015, pages   11-74. 
   
   \s
   \bibitem[Co] {C} A.Connes: {\it The Witt construction in characteristic $1$ and quantization.} Noncommutative geometry and global analysis,  Contemp. Math., 546, Amer. Math. Soc., Providence, RI, 2011, pages 83-113. 
   
   \s
   \bibitem[Go]{Golan} J. Golan:  {\it Semi-rings and their applications.} Book, Kluwer Academic Publishers, Dordrecht, 1999.
   
   \s
   \bibitem[Ha]{Ha} R. Hartshorne: {\it Algebraic Geometry.} Book, Graduate Texts in Mathematics.
   
   \s
   \bibitem[Iz-Ro]{Iz} Z. Izhakian and L. Rowen: {\it Congruences and coordinate semirings of tropical varieties.}  Bull. Sci. Math. 140 (2016), no. 3, pages 231-259. 
   
   \s
   \bibitem[Ka]{Ka} V. Kala: {\it Lattice-ordered abelian groups finitely generated as semi-rings}. Preprint Arkiv: 1502.01651.
   
   
   \s
   \bibitem[Ke]{Kedlaya} K. Kedlaya: {\it On commutative nonarchimedean Banach fields.} Preprint ArXiv:1602.09004.
   
   \s
   \bibitem[Ke-Li]{KL} K. Kedlaya and  R. Liu : {\it Relative $p-$adic Hodge Theory II: imperfect period rings.}
    Astérisque No. 371 (2015), 239 pages.
      
   \s
   \bibitem[Le]{Lescot} P. Lescot: {\it Absolute algebra II. Ideals and spectra.} Journal of Pure and Applied Algebra 215 (2011) pages: 1782 - 1790.
   \s
  \bibitem[Li]{Li} G. Litvinov: {\it  Tropical Mathematics, Idempotent Analysis, Classical Mechanics and Geometry.} 
   Spectral theory and geometric analysis,  Contemp. Math., 535, Amer. Math. Soc., Providence, RI, 2011, pages 159-186.
   \s



\bibitem[Lo]{Lor}  O. Lorscheid: {\it Scheme theoretic tropicalization.} Preprint Arkiv :1508.07949.
  
  
   \s
   \bibitem[Pa-Rh]{Pareigis} B. Pareigis and H. Rohrl: {\it  Remarks on Semimodules.}  Preprint ArXiv:1305.5531
   
   \s
   \bibitem[Ru]{Ru} W. Rudin: {\it Functional Analysis.} Book. Tata Mc Graw Hill.
   
   \s
   \bibitem[Sa]{Sa} A. Sagnier: {\it Construction of Arithmetic sites of Connes-Consani type for imaginary quadratic fields.}
   PhD Thesis in preparation.
   
   \s
   \bibitem[Sam]{S} P. Samuel: {\it Th\'eorie Alg\'ebrique des Nombres.} Book. Collection M\'ethodes. Hermann. 
   
   \s
   \bibitem[Sc]{Sc} L. Schwartz: {\it Analyse, Topologie Générale et Analyse Fonctionnelle.} Book. Enseignement des Sciences.
   
   \s 
   \bibitem[To-Va]{To}
     B. Toen and  M. Vaqui\'e: {\it Au-dessous de Spec $\Z$.} J. K-Theory 3 (2009), no. 3, pages 437-500.
\end{thebibliography}

\end{document}